\tikzset{liltext/.style={font=\tiny}}
\newtheorem{theorem}{Theorem}[section]
\newtheorem{prop}{Proposition}[section]
\newtheorem{lemma}{Lemma}[section]
\newtheorem{remark}{Remark}[section]
\newcommand{\ml}{\mathcal}
\newcommand{\mb}{\mathbb}
\newcommand{\R}{\mathbb{R}}
\newcommand{\e}{\varepsilon}
\newcommand{\ity}{\infty}
\newcommand{\f}{\displaystyle\frac}
\title{The Cauchy problem for the nonlinear viscous Boussinesq equation in the $L^q$ framework} 
\author[1]{Wenhui Chen\thanks{Wenhui Chen (wenhui.chen.math@gmail.com)}}
\author[2,3]{Tuan Anh Dao\thanks{Tuan Anh Dao (anh.daotuan@hust.edu.vn)}}
\affil[1]{School of Mathematical Sciences, Shanghai Jiao Tong University, 200240 Shanghai, China}
\affil[2]{School of Applied Mathematics and Informatics, Hanoi University of Science and Technology, No. 1 Dai Co Viet road, Hanoi, Vietnam}
\affil[3]{Institute of Mathematics, Vietnam Academy of Science and Technology, No. 18 Hoang Quoc Viet road, Hanoi, Vietnam}
\date{}
\begin{document}

\maketitle
\begin{abstract}
	\medskip
 In this paper, we study the viscous Boussinesq equation in the whole space $\mathbb{R}^n$, which describes the propagation of small amplitude and long waves on the surface of water with viscous effects. Concerning the linearized Cauchy problem, some qualitative properties of solutions including $L^m-L^q$ estimates with $1\leqslant m\leqslant q\leqslant \infty$, inviscid limits and asymptotic profiles of solution with respect to the small viscosity are investigated by means of the Fourier analysis and the WKB method. For another, by applying some fractional order interpolations in the harmonic analysis, we derive the $L^q$ well-posedness and estimates for small data solutions to the nonlinear viscous Boussinesq equation under some conditions for the parameter of nonlinearity.
\\ 
	
	\noindent\textbf{Keywords:} viscous Boussinesq equation,  oscillating integrals, decay estimates, asymptotic profiles, inviscid limits, global well-posedness.\\
	
	\noindent\textbf{AMS Classification (2020)}  35G25,  35B40, 35B25, 35A01
\end{abstract}
\fontsize{12}{15}
\selectfont


\section{Introduction} 
\subsection{Background of the inviscid Boussinesq equation}
It is well-known that mathematical studies in the propagation of water waves over a fluid have been widely applied in fluid mechanics, hydrogeology as well as coastal engineering. There are several mathematical models to describe the propagation of water waves. Particularly in the 1870's, originating from the Euler equations, a series of works \cite{Boussinesq-01,Boussinesq-02,Boussinesq-03} from Joseph Boussinesq derived some fundamental models for the propagation of long waves with small amplitude on the surface of shallow water. Among them, the so-called \emph{Boussinesq model} in the inviscid medium, i.e.
\begin{align*}
	u_{tt}-u_{xx}+u_{xxxx}=(u^2)_{xx},
\end{align*}
was introduced  in \cite{Boussinesq-03}, which is innovative to provide some scientific explanations to the phenomenon of permanent waves observed by the report \cite{Russell}. In the above, the scalar unknown function $u=u(t,x)$ may stand for an elevation of the free surface of inviscid fluids.

Later, a general Boussinesq model carrying a nonlinear term $(f(u))_{xx}$ with a smooth, pure power function $f=f(u)$ arises in the modeling of nonlinear strings \cite{Bona-Sachs=1988}. In the multi-dimensional case, the generalization of inviscid Boussinesq equation is introduced by
\begin{align}\label{Generalized_Boussinesq_Eq}
	u_{tt}-\Delta u+\Delta^2u=\Delta f(u;p),
\end{align}
where the smooth function  $f(u;p)$ behaves as a power nonlinearity $f(u;p)=\mathcal{O}(|u|^p)$ as $|u|\downarrow 0$ with $p>1$. Indeed, the inviscid Boussinesq equation \eqref{Generalized_Boussinesq_Eq} models nonlinear waves in the media with dispersion because of its dispersive linear semigroup
\begin{align*}
\exp\left(\pm it|D|\sqrt{1+|D|^2}\,\right).	
\end{align*}

 Until now, some qualitative properties of solutions to \eqref{Generalized_Boussinesq_Eq}, including well-posedness, regularity analysis, blow-up and asymptotic behaviors, have been deeply established. We refer the interested readers to \cite{Levine=1974,Deift-Tomei-Trub=1982,Bona-Sachs=1988,Manoranjan-Ortega-Sanz=1988,Sachs=1990,Tsutsumi-Matahashi=1991,Straughan=1992,Linares=1993,Liu=1995,Linares-Scialom=1995,Liu=1997,Xue=2006,Cho-Ozawa=2007,Yang-Guo=2008,Liu-Xu=2008,Lin-Wu-Loxton=2009,Farah=2009,Farah=2009-02,Farah-Linares=2010,Baro-Figu-Himo=2019} and references therein.

\subsection{Background of the viscous Boussinesq equation}
Although the viscous effect may be neglected for most of oceanic situations, the viscosity that measures its internal resistance to motions, cannot be excluded for surface waves in relatively shallow channels. For this reason, concerning the long wave propagation in the viscous medium, it is nature to consider the following viscous Boussinesq equation:
\begin{align}\label{Generalized_Viscous_Boussinesq_Eq}
	u_{tt}-\Delta u+\Delta^2u- 2\epsilon \Delta u_t=\Delta f(u;p),
\end{align}
where the constant $\epsilon\in(0,1)$ stands for the viscous coefficient.  In mathematic analysis for fluid mechanics, the rigorous justification for inviscid limits, i.e. the convergence of solutions as the viscosity tending to zero, is one of the most challenging and crucial questions, e.g. Navier-Stokes equations versus Euler equations \cite{Masmoudi=2007}. Formally taking $\epsilon=0$ in the viscous model \eqref{Generalized_Viscous_Boussinesq_Eq}, it immediately turns out to be the inviscid model \eqref{Generalized_Boussinesq_Eq}, whose global (in time) inviscid limit is always an open problem. The viscous Boussinesq equation \eqref{Generalized_Viscous_Boussinesq_Eq} depicts nonlinear waves in the media with dissipation as well as dispersion due to its dissipative-dispersive linear semigroup
\begin{align*}
	\exp\left(-\epsilon t|D|^2\pm it|D|\sqrt{1+(1-\epsilon^2)|D|^2}\,\right)
\end{align*}
We now may observe that a crucial difference between \eqref{Generalized_Boussinesq_Eq} and \eqref{Generalized_Viscous_Boussinesq_Eq} is the dissipation component $\exp(-\epsilon t|D|^2)$ caused by the viscous term $- 2\epsilon \Delta u_t$. 

Actually, the corresponding abstract problem (but with different nonlinear terms) for \eqref{Generalized_Viscous_Boussinesq_Eq} has been studied by \cite{Biler=1989,Biler=1991}, where global (in time)  well-posedness and time-decay estimates were derived. Facing exactly with the Cauchy problem for \eqref{Generalized_Viscous_Boussinesq_Eq}, the authors in \cite{Varlamov=1996,Varlamov=1996-02} proved local (in time) well-posedness by employing successive approximations. Later, some global (in time)  existence  and blow-up results for \eqref{Generalized_Viscous_Boussinesq_Eq} in 1D were established in \cite{Xu-Luo-Shen-Huang=2017}. The authors in \cite{Wang=2013,Liu-Wang=2014} derived time-decay estimates for solutions to the linearized viscous Boussinesq equation by Green's function, and demonstrated global (in time)  existence of solution with additionally $L^1$ small data. Particularly, under some polynomially weighted $L^1$ assumptions on initial data, asymptotic profiles of solutions have been found in \cite{Ikehata-Soga}. Recently, by ignoring the dissipative part, the authors in \cite{Liu-Wang=2019} completed uniform estimates with respect to the viscosity $\epsilon$ and rigorously demonstrated local (in time) inviscid limits for \eqref{Generalized_Viscous_Boussinesq_Eq} as $\epsilon\downarrow0$. Simultaneously, with the aid of the Littlewood-Paley theory, in the more recent paper \cite{Liu-Wang=2020} they not only have developed some new estimates for a class of dissipative-dispersive linear semigroup but also have proved global (in time) existence of small data Sobolev solutions. 

To the best of the authors' knowledge, although $L^{q'}-L^q$ estimates ($q'$ is the H\"older conjugate of $q$ so that $q\geqslant 2$) for solutions to the corresponding linear Cauchy problem for \eqref{Generalized_Viscous_Boussinesq_Eq} have been deeply studied in \cite{Liu-Wang=2020}, the investigation of the nonlinear viscous Boussinesq equation \eqref{Generalized_Viscous_Boussinesq_Eq} in general $L^q$ spaces, especially for $1\leqslant q<2$, has never been considered in this literature in term of the study of the initial value problem. We will partly answer this question in the present work. 

\subsection{Main purposes of the paper}
In the present paper, we will consider the Cauchy problem for the following viscous Boussinesq equation:
\begin{equation}\label{Equation_Main}
	\begin{cases}
		u_{tt}- \Delta u+ \Delta^2 u- 2\epsilon \Delta u_t= \Delta f(u;p), &x\in \R^n,\ t>0, \\
		u(0,x)= u_0(x),\  u_t(0,x)= u_1(x), &x \in \R^n,
	\end{cases}
\end{equation}
where the smooth nonlinear function behaves $f(u;p)=\mathcal{O}(|u|^p)$ when $|u|\downarrow 0$ equipping $p>1$ exactly as we introduced in foregoing paragraphs, and we are going to study its corresponding linearized Cauchy problem with the vanishing right-hand side as follows:
\begin{equation}\label{Equation_Main_Linear}
	\begin{cases}
		v_{tt}- \Delta v+ \Delta^2 v- 2\epsilon \Delta v_t= 0, &x\in \R^n,\ t>0, \\
		v(0,x)= v_0(x),\  v_t(0,x)= v_1(x), &x \in \R^n,
	\end{cases}
\end{equation}
where $\epsilon\in(0,1)$ stands for the viscous coefficient.

Our first purpose in this paper is to understand qualitative properties of solutions to the linearized viscous Boussinesq equation \eqref{Equation_Main_Linear} by employing the Fourier analysis in the $L^q$ framework, which not only helps us to comprehend the underlying physical phenomena in a certain condition but also plays a pivotal role in developing the corresponding nonlinear problem \eqref{Equation_Main}. Precisely in Section \ref{Section_Lm-Lq}, by applying the theory of modified Bessel's function associated with Fa\`{a} di Bruno's formula (see Appendix \ref{Section_Appendix_A}), we derive $L^m-L^q$ estimates for solutions with any $1\leqslant m\leqslant q\leqslant \infty$, which improve the previous results and enlarge the admissible range of index $q$ (in the $L^q$ solution space). Especially, due to invalidation of some embeddings in $L^q$ spaces if $1\leqslant q<2$, we have to control the Fourier multiplier 
\begin{align}\label{Multiplier}
	\ml{F}^{-1}_{\xi\to x}\left(\mathrm{e}^{-\epsilon|\xi|^2t}\frac{\sin\left(|\xi|\sqrt{(1-\epsilon^2)|\xi|^2+1}\,t\right)}{|\xi|\sqrt{(1-\epsilon^2)|\xi|^2+1}}\right)(t,\cdot)
\end{align}
in several norms by some suitable ways. Roughly speaking, another challenge comes from unclear combined influence from the oscillating structure, dissipative effect and singularities (for low frequencies) of the multiplier \eqref{Multiplier}. Differently from the treatment of dissipative-dispersive estimates stated in \cite{Liu-Wang=2020}, we express the dispersive part by some oscillating functions and deal with them by employing refined WKB analysis.

Furthermore, by naturally taking into account the limit case with the vanishing viscosity in the Cauchy problem \eqref{Equation_Main_Linear}, it turns out to be the inviscid Boussinesq equation
\begin{equation}\label{Equation_Inviscid_Linear}
	\begin{cases}
		v^{0}_{tt}- \Delta v^{0}+ \Delta^2 v^0= 0, &x\in \R^n,\ t>0, \\
		v^0(0,x)= v^0_0(x),\  v^0_t(0,x)= v^0_1(x), &x \in \R^n.
	\end{cases}
\end{equation}
The model signifies that viscous forces are neglected in the propagation of long waves on the surface of shallow water. Formally, equipped identical initial data in viscous and inviscid models, \eqref{Equation_Inviscid_Linear} seems to be the limit model for \eqref{Equation_Main_Linear} with the vanishing viscosity, but in this limit procedure the exponential dissipation will fade away as we mentioned in the above. In Section \ref{Section_Inviscid_Limits}, under the consistent assumption for initial data, we rigorously justify inviscid limits from the viscous case \eqref{Equation_Main_Linear} to the inviscid case \eqref{Equation_Inviscid_Linear} as the viscous coefficient tends to zero, namely, $v\to v^0$ in $L^{\infty}([0,T]\times\mb{R}^n)$ as $\epsilon\downarrow0$ with the rate of convergence $\epsilon$. A further consideration is to prove rigorously the second-order asymptotic expansion, in other words, $v\to v^0+\epsilon\tilde{v}$ in $L^{\infty}([0,T]\times\mb{R}^n)$ as $\epsilon\downarrow0$ with the improved rate of convergence $\epsilon^2$, where $\tilde{v}$ is a solution to the inhomogeneous inviscid Boussinesq equation (to be shown in Subsection \ref{Sub_Sec_Second-Order}). Our approaches are based on the WKB analysis associated with the multi-scale analysis, and energy methods in the Fourier space.

Our second purpose in this paper is to study global (in time) well-posedness and some $L^q$ estimates for solutions to the nonlinear viscous Boussinesq equation \eqref{Equation_Main} with small data. In Section \ref{Section_GESDS}, under some conditions for the power $p$ in the nonlinearity, we construct a time-weighted Banach space in the $L^q$ framework and demonstrate a contraction mapping in this space. Then, the global (in time) existence of unique solution
    $$u\in \ml{C}\left([0,\infty),H^s_q\right)$$
to the Cauchy problem \eqref{Equation_Main} will be proved. This result extends the previous results in the $L^2$ setting to the general $L^q$ setting with $q\in(1,\infty)$.

\color{black}

\medskip

\noindent\textbf{Notation:}  
To end this section, we give some notations to be used throughout this paper. Later, $c$ and $C$ denote some positive constants, which may be changed from line to line. We denote that $f\lesssim g$ if there exists a positive constant $C$ such that $f\leqslant Cg$ and, analogously, for $f\gtrsim g$. We take $\delta_0$ to be the Dirac distribution at $x=0$. Moreover, $\dot{H}^s_q$ with $s\geqslant0$ and $1\leqslant q\leqslant \infty$, denote Riesz potential spaces based on $L^q$. Furthermore, $|D|^s$ and $\langle D\rangle^s$ with $s\geqslant0$ stand for the fractional Laplacian and the pseudo-differential operators with the symbol $|\xi|^s$ and $\langle\xi\rangle^s$, respectively. Here, the Japanese bracket is denoted by $\langle\xi\rangle:=\sqrt{1+|\xi|^2}$.

\color{black}
\section{Some $L^m-L^q$ estimates for solutions}\label{Section_Lm-Lq}
\subsection{Pretreatments in the Fourier space}
To begin with our analysis in the $L^q$ space, let us apply the partial Fourier transform with respect to spatial variables $x$ to the linear problem \eqref{Equation_Main_Linear} such that $\hat{v}(t,\xi):=\ml{F}_{x\to\xi}(v(t,x))$. Then, we arrive at
\begin{equation}\label{Equation_Main_Linear_Fourier}
	\begin{cases}
		\hat{v}_{tt}+ 2\epsilon |\xi|^2\hat{v}_t+|\xi|^2(1+|\xi|^2)\hat{v}= 0, &\xi\in \R^n,\ t>0, \\
		\hat{v}(0,\xi)= \hat{v}_0(\xi),\  \hat{v}_t(0,\xi)= \hat{v}_1(\xi), &\xi \in \R^n,
	\end{cases}
\end{equation}
whose characteristic roots $\lambda_{\pm}=\lambda_{\pm}(|\xi|)$ are given by
\begin{align}\label{Characteristic_ROOT}
	\lambda_{\pm}(|\xi|)=-\epsilon|\xi|^2\pm i|\xi|\sqrt{(1-\epsilon^2)|\xi|^2+1}
\end{align}
thanks to $\epsilon\in(0,1)$.
\begin{remark}
 According to the expression \eqref{Characteristic_ROOT}, dissipative structure as well as dispersive structure exist simultaneously. As a consequence, it is delicate and non-trivial to derive the decay properties contributed by them.
\end{remark}
 The pairwise distinct characteristic roots \eqref{Characteristic_ROOT} allow us to represent the solution to \eqref{Equation_Main_Linear_Fourier} by
\begin{align*}
	\hat{v}(t,\xi)&=\underbrace{\frac{\lambda_+(|\xi|)\mathrm{e}^{\lambda_-(|\xi|)t}-\lambda_-(|\xi|)\mathrm{e}^{\lambda_+(|\xi|)t}}{\lambda_+(|\xi|)-\lambda_-(|\xi|)}}_{=:\widehat{K}_0(t,|\xi|)}\hat{v}_0(\xi)+\underbrace{\frac{\mathrm{e}^{\lambda_+(|\xi|)t}-\mathrm{e}^{\lambda_-(|\xi|)t}}{\lambda_+(|\xi|)-\lambda_-(|\xi|)}}_{=:\widehat{K}_1(t,|\xi|)}\hat{v}_1(\xi).
\end{align*}
Our assignment in this part is to process estimates for the Fourier multipliers 
\begin{align*}
	\ml{F}^{-1}_{\xi\to x}(\widehat{K}_0(t,|\xi|)) \ \ \mbox{and}\ \ \ml{F}^{-1}_{\xi\to x}(\widehat{K}_1(t,|\xi|))
\end{align*}
 as a whole, respectively, because we expect some cancellations between two dispersive parts in $\mathrm{e}^{\lambda_{\pm}(|\xi|)t}$. Precisely, the kernels can be rewritten by
\begin{align}
	\widehat{K}_0(t,|\xi|)&=\mathrm{e}^{-\epsilon|\xi|^2t}\left(\cos\left(|\xi|\sqrt{(1-\epsilon^2)|\xi|^2+1}\,t\right)+\frac{\epsilon|\xi|\sin\left(|\xi|\sqrt{(1-\epsilon^2)|\xi|^2+1}\,t\right)}{\sqrt{(1-\epsilon^2)|\xi|^2+1}}\right), \label{K_0-Formula} \\
	\widehat{K}_1(t,|\xi|)&=\mathrm{e}^{-\epsilon|\xi|^2t}\frac{\sin\left(|\xi|\sqrt{(1-\epsilon^2)|\xi|^2+1}\,t\right)}{|\xi|\sqrt{(1-\epsilon^2)|\xi|^2+1}} \label{K_1-Formula}.
\end{align}
Noticing that since the dissipative part $\mathrm{e}^{-\epsilon|\xi|^2t}$, the oscillating part from the     sine function as well as the singular part $|\xi|^{-1}$ (for low frequencies) exist concurrently in the Fourier multiplier \eqref{K_1-Formula}, it is significant to see their combined influence. 
\begin{remark}
Here, we give some comments for our methods (will be utilized in the next parts) to estimate the Fourier multiplier corresponding to \eqref{K_1-Formula} as follows:
\begin{itemize}
	\item Differently from the study of the viscous Boussinesq equation \eqref{Equation_Main_Linear} in the $L^2$ framework \cite{Ikehata-Soga}, the Plancherel theorem does not work anymore in the general $L^r$ framework for any $r\in [1,\ity]$.
	\item Differently from the study of the viscous Boussinesq equation \eqref{Equation_Main_Linear} in the $L^q$ framework with $q\geqslant 2$ (see \cite{Liu-Wang=2020}), the embedding from the $L^q$ physical space to the $L^{q'}$ phase space (with $1/q+1/q'=1$ and $q\geqslant 2$) does not work anymore in the general $L^r$ framework if $1\leqslant r<2$.
\end{itemize}
For the above reasons, the previous used approaches bring some difficulties to analyze finely decay properties of $\|K_1(t,|D|)\|_{L^r}$ for any $r\in [1,\ity]$. 
\end{remark}

By considering the cases of low and high frequencies individually, one gets
\begin{itemize}
	\item $\lambda_{\pm} \sim - \epsilon|\xi|^{2}\pm i|\xi|$, $\lambda_+ -\lambda_- \sim i|\xi|$ for low frequencies $|\xi|\ll 1$;
	\item $\lambda_{\pm} \sim -\epsilon|\xi|^{2}\pm i\sqrt{1-\epsilon^2}|\xi|^2$, $\lambda_+ -\lambda_- \sim i|\xi|^2$ for high frequencies $|\xi|\gg1$.
\end{itemize}
We now introduce the radial, smooth cut-off functions $\chi_{\rm L}(|\xi|)$, $\chi_{\rm H}(|\xi|)$ and $\chi_{\rm M}(|\xi|)$ defined by 
\begin{gather*}
	\chi_{\rm L} (|\xi|) := \begin{cases}
		1 &\mbox{if}\ \ |\xi| \leqslant (1-\epsilon^2)^2/2, \\
		0 &\mbox{if}\ \ |\xi| \geqslant(1-\epsilon^2)^2, 
	\end{cases} \qquad
	\chi_{\rm H} (|\xi|) := \begin{cases}
		1 &\mbox{if }\ \ |\xi| \geqslant4(1-\epsilon^2)^{-2}, \\
		0 &\mbox{if }\ \ |\xi| \leqslant2(1-\epsilon^2)^{-2}, 
	\end{cases}  
\end{gather*}
and $\chi_{\rm M} (|\xi|) := 1- \chi_{\rm L} (|\xi|) - \chi_{\rm H} (|\xi|)$, respectively, for decomposing solutions to the Cauchy problem \eqref{Equation_Main_Linear} into three parts localized separately to low, middle and high frequency zones. To be specific, we rewrite the solution by
\begin{align*}
	v(t,x)
	=\mathcal{F}^{-1}_{\xi\to x}\big(\hat{v}(t,\xi)\chi_{\rm L}(|\xi|)\big)(t,x)+\mathcal{F}^{-1}_{\xi\to x}\big(\hat{v}(t,\xi)\chi_{\rm M}(|\xi|)\big)(t,x)+\mathcal{F}^{-1}_{\xi\to x}\big(\hat{v}(t,\xi)\chi_{\rm H}(|\xi|)\big)(t,x).
\end{align*}
In order to obtain $L^m-L^q$ estimates for solutions to \eqref{Equation_Main_Linear} with $1\leqslant m\leqslant q\leqslant \ity$, by the mean of Young's inequality, $L^r$ estimates for the kernels $K_0(t,|D|)$ and $K_1(t,|D|)$ with any $r\in[1,\infty]$ play essential roles. Consequently, we will establish both $L^1$ estimates as well as $L^\ity$ estimates for these kernels, and employ some interpolation theorems to conclude our desired estimates.
\begin{remark}
	Because the real parts of the characteristic roots $\lambda_{\pm}$ are negative in the middle frequency zone 
	\begin{align*}
		\left\{\xi\in \R^n : |\xi| \in \left[(1-\epsilon^2)^2/2,4(1-\epsilon^2)^{-2}\right]\right\},
	\end{align*}
the corresponding estimates for this zone possess an exponential decay
\begin{align}\label{estimate_middle}
 \left\|\ml{F}^{-1}_{\xi\to x}\left(|\xi|^{\alpha}\widehat{K}_{j}(t,|\xi|)\chi_{\mathrm{M}}(|\xi|)\right)(t,\cdot)\right\|_{L^r}\lesssim \mathrm{e}^{-ct}
\end{align}
with $j=0,1$, for $\alpha\geqslant 0$ and any $r\in[1,\infty]$, where $c$ is a positive constant. For this reason, it suffices to devote our consideration to the low frequency zone and the high frequency zone.
\end{remark}

\subsection{Estimates for high frequencies}
First of all, let us derive $L^1$ estimate for the following oscillating integral in the high frequency zone:
\begin{align*}
	\ml{M}(t,x):=\mathcal{F}^{-1}_{\xi\to x}\left(\mathrm{e}^{-c_1 |\xi|^2 t}|\xi|^{\alpha}\frac{\sin \left(c_2 |\xi|^2 g_{\rm H}(|\xi|) t\right)}{g_{\rm H}(|\xi|)}\chi_{\rm H}(|\xi|)\right)(t,x),
\end{align*}
with $\alpha \geqslant 0$, where $c_1$ is a positive constant, $c_2 \ne 0$ is a real constant and $g_{\rm H}(|\xi|):= \sqrt{(1-\epsilon^2)+|\xi|^{-2}}$.

\begin{lemma} \label{Key_Lemma}
Let $\alpha\geqslant0$.	The following estimates hold in $\R^n$ for any $n \geqslant 1$:
	\begin{align*}
	    \|\ml{M}(t,\cdot)\|_{L^1}\lesssim t^{-\frac{\alpha}{2}}.
	\end{align*}
\end{lemma}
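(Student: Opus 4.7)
The plan is to separate the analysis into the two regimes $t\geqslant 1$ and $t\in(0,1]$, handling the former by the strong exponential damping from the Gaussian factor on the support of $\chi_{\rm H}$, and the latter by parabolic rescaling combined with a Sobolev-type Fourier bound.

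For $t\geqslant 1$, the cut-off $\chi_{\rm H}$ forces $|\xi|\geqslant 2(1-\epsilon^2)^{-2}$, so $\mathrm{e}^{-c_1|\xi|^2 t}\leqslant \mathrm{e}^{-c t}$ uniformly on its support. Factoring $\mathrm{e}^{-c_1|\xi|^2 t}=\mathrm{e}^{-c t/2}\cdot\mathrm{e}^{-c_1|\xi|^2 t/2}$ and using the standard pointwise bound $(1+|x|)^{2N}|\mathcal{M}(t,x)|\lesssim\sum_{|\beta|\leqslant 2N}\|\partial_\xi^\beta m(\cdot,t)\|_{L^1}$ (where $m$ denotes the symbol of $\mathcal{M}$), one easily gets $\|\mathcal{M}(t,\cdot)\|_{L^1}\lesssim \mathrm{e}^{-c t/2}$, which is far better than $t^{-\alpha/2}$ for $t\geqslant 1$.

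For $t\in(0,1]$, apply the parabolic rescaling $\xi=\eta/\sqrt{t}$, $x=\sqrt{t}\,y$. The identities $|\xi|^2 g_{\rm H}(|\xi|)\,t=|\eta|\sqrt{(1-\epsilon^2)|\eta|^2+t}$ and $\sin z/z=\mathrm{sinc}(z)$ yield
\begin{equation*}
\|\mathcal{M}(t,\cdot)\|_{L^1(dx)}=t^{-\alpha/2}\,\|\mathcal{I}_t\|_{L^1(dy)},\qquad \mathcal{I}_t(y):=\int_{\R^n}\mathrm{e}^{i\eta\cdot y}\,G_t(\eta)\,d\eta,
\end{equation*}
where $G_t(\eta):=c_2\,|\eta|^{\alpha+2}\,\mathrm{e}^{-c_1|\eta|^2}\,\mathrm{sinc}\!\bigl(c_2|\eta|\sqrt{(1-\epsilon^2)|\eta|^2+t}\bigr)\,\chi_{\rm H}(|\eta|/\sqrt{t})$. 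It thus suffices to show $\|\mathcal{I}_t\|_{L^1(dy)}\lesssim 1$ uniformly in $t\in(0,1]$. First one replaces $\chi_{\rm H}(|\eta|/\sqrt{t})$ by $1$; the resulting error is supported on $|\eta|\lesssim\sqrt{t}$, where the vanishing factor $|\eta|^{\alpha+2}$ renders it arbitrarily small in any sufficiently regular norm as $t\downarrow 0$. For the remaining piece, the weighted Cauchy--Schwarz inequality and Plancherel's theorem give
\begin{equation*}
\|\mathcal{I}_t\|_{L^1(dy)}\leqslant\|(1+|y|)^{-s}\|_{L^2}\,\|(1+|y|)^{s}\mathcal{I}_t\|_{L^2}\lesssim\|(1-\Delta_\eta)^{s/2} G_t\|_{L^2(d\eta)}
\end{equation*}
for any $s>n/2$. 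The Gaussian $\mathrm{e}^{-c_1|\eta|^2}$ furnishes decay at infinity; the sinc composed with the smooth phase $c_2|\eta|\sqrt{(1-\epsilon^2)|\eta|^2+t}$, together with all its $\eta$-derivatives, is bounded uniformly in $t\in(0,1]$ via Fa\`{a} di Bruno's formula; and the only local obstruction comes from $|\eta|^{\alpha+2}$ at the origin, which lies in $H^s_{\rm loc}$ for any $s<n/2+\alpha+2$. Choosing $s\in(n/2,\,n/2+\alpha+2)$, a non-empty range since $\alpha\geqslant 0$, closes the estimate.

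The main obstacle is this last verification: establishing the uniform-in-$t$ $H^s$ control of $G_t$ (without the rescaled cut-off) and of the residual error. The delicate point is to track, via Fa\`{a} di Bruno's formula, each $\eta$-derivative of $\mathrm{sinc}\bigl(c_2|\eta|\sqrt{(1-\epsilon^2)|\eta|^2+t}\bigr)$ and verify that it remains bounded as $t\downarrow 0$, and simultaneously to ensure that the derivatives of the rescaled cut-off $\chi_{\rm H}(|\eta|/\sqrt{t})$ — which individually blow up like $t^{-k/2}$ on a thin shell of measure $\sim t^{n/2}$ — combine with the vanishing factor $|\eta|^{\alpha+2}\lesssim t^{(\alpha+2)/2}$ to stay controlled in $L^2$ after pairing.
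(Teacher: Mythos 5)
Your argument is correct, but it takes a genuinely different route from the paper's. After the common first step (the parabolic rescaling $\xi=t^{-1/2}\eta$, which extracts the factor $t^{-\alpha/2}$), the paper proves the uniform $L^1$ bound on the rescaled multiplier by exploiting radial symmetry: it passes to the modified Bessel representation, integrates by parts in the radial variable via the vector field $\mathcal{V}$, distinguishes odd and even dimensions, and derives pointwise decay of order $|x|^{-n-\delta}$ for $|x|\geqslant 1$. You instead use $\|\ml{F}^{-1}G\|_{L^1}\lesssim \|G\|_{H^s}$ for $s>n/2$ (weighted Cauchy--Schwarz plus Plancherel), which avoids Bessel functions and the odd/even dichotomy altogether and is uniform in the dimension; the price is that you must find an admissible Sobolev index, and this works precisely because rewriting $\sin(\cdot)/g_{\rm H}$ as $c_2|\eta|^{\alpha+2}\,\mathrm{sinc}\bigl(c_2|\eta|\sqrt{(1-\epsilon^2)|\eta|^2+t}\bigr)$ gains two extra powers of $|\eta|$ at the origin, so the window $s\in(n/2,\,n/2+\alpha+2)$ is nonempty. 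Two points in your sketch should be made explicit. First, the phase $c_2|\eta|\sqrt{(1-\epsilon^2)|\eta|^2+t}$ is \emph{not} smooth at $\eta=0$; what saves you is that $\mathrm{sinc}$ is even and entire, so $\mathrm{sinc}(z)=h(z^2)$ with $h$ entire and bounded together with all derivatives on $[0,\infty)$, while $z^2=c_2^2\bigl((1-\epsilon^2)|\eta|^4+t|\eta|^2\bigr)$ is a polynomial with coefficients bounded for $t\in(0,1]$ --- this is the clean way to get uniform smoothness of the sinc factor. Second, ``arbitrarily small in any sufficiently regular norm'' for the cut-off error must respect the same ceiling $s<n/2+\alpha+2$: picking an integer $k$ in $(n/2,\,n/2+\alpha+2)$ (possible since the interval has length $\alpha+2\geqslant 2$), Leibniz' rule on the shell $|\eta|\sim\sqrt{t}$ yields $\|\partial^{k}E_t\|_{L^2}\lesssim t^{-k/2}\cdot t^{(\alpha+2)/2}\cdot t^{n/4}$, whose exponent is nonnegative exactly under that ceiling. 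With these two clarifications the verification you flag as the ``main obstacle'' does go through, and your separate treatment of $t\geqslant 1$ via the uniform exponential gain on the support of $\chi_{\rm H}$ is also fine, so the scheme yields a complete and somewhat more elementary proof of the lemma.
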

\begin{proof} Our strategies are based on the ideas from Lemma 5 in \cite{Nara-Reissig=2013}. Carrying out the change of variable $\xi=t^{-\frac{1}{2}}\eta$ we have
	\begin{align*}
		\ml{M}(t,x)= t^{-\frac{n+\alpha}{2}}\ml{F}_{\eta\to x}^{-1}\left(\mathrm{e}^{-c_1 |\eta |^2}|\eta|^{\alpha}\frac{\sin\left(c_2 |\eta|^2 g_{\rm H}\left(t^{-\frac{1}{2}}|\eta|\right)\right)}{g_{\rm H}\left(t^{-\frac{1}{2}}|\eta|\right)}\chi_{\rm H}\left(t^{-\frac{1}{2}}|\eta|\right)\right)(t,t^{-\frac{1}{2}}x).
	\end{align*}
	By letting $t^{-\frac{1}{2}}x\mapsto x$, it follows that
	\begin{align*}
		\|\ml{M}(t,\cdot)\|_{L^1} = t^{-\frac{\alpha}{2}}\left\|\ml{F}_{\eta\to x}^{-1}\left(\mathrm{e}^{-c_1 |\eta |^2}|\eta|^{\alpha}\frac{\sin\left(c_2 |\eta|^2 g_{\rm H}\left(t^{-\frac{1}{2}}|\eta|\right)\right)}{g_{\rm H}\left(t^{-\frac{1}{2}}|\eta|\right)}\chi_{\rm H}\left(t^{-\frac{1}{2}}|\eta|\right)\right)(t,\cdot)\right\|_{L^1}.
	\end{align*}
	Hence, in the sequel let us devote our attention to estimate the Fourier multiplier
	\begin{equation}\label{H-FourierMultiplier}
		\mathcal{H}(t,x):= \ml{F}_{\eta\to x}^{-1}\left(\mathrm{e}^{-c_1 |\eta |^2}|\eta|^{\alpha}\frac{\sin\left(c_2 |\eta|^2 g_{\rm H}\left(t^{-\frac{1}{2}}|\eta|\right)\right)}{g_{\rm H}\left(t^{-\frac{1}{2}}|\eta|\right)}\chi_{\rm H}\left(t^{-\frac{1}{2}}|\eta|\right)\right)(t,x).
	\end{equation}
	At the first stage, we restrict ourselves into  $|x| \leqslant 1$. It deduces immediately
	\begin{align*}
		\|\mathcal{H}(t,\cdot)\|_{L^1(|x| \leqslant 1)}& =\int_{|x|\leqslant 1}\left|\int_{|\eta|>\frac{2}{(1-\epsilon^2)^2}t^{\frac{1}{2}}}\mathrm{e}^{ix\cdot\eta-c_1|\eta|^2}|\eta|^{\alpha}\frac{\sin\left(c_2 |\eta|^2 g_{\rm H}\left(t^{-\frac{1}{2}}|\eta|\right)\right)}{g_{\rm H}\left(t^{-\frac{1}{2}}|\eta|\right)}\mathrm{d}\eta\right|\mathrm{d}x\\
		 &\lesssim \int_{|\eta|>\frac{2}{(1-\epsilon^2)^2}t^{\frac{1}{2}}}|\eta|^{\alpha}\mathrm{e}^{-c_1|\eta|^2}\mathrm{d}\eta\lesssim 1,
	\end{align*}
    in which we used $g_{\mathrm{H}}(t^{-\frac{1}{2}}|\eta|)\geqslant\sqrt{1-\epsilon^2}$. Therefore, we may claim the following estimate:
    \begin{align*}
	    \|\ml{M}(t,\cdot)\|_{L^1(|x| \leqslant 1)} \lesssim t^{-\frac{\alpha}{2}}\|\mathcal{H}(t,\cdot)\|_{L^1(|x| \leqslant 1)}\lesssim t^{-\frac{\alpha}{2}}.
    \end{align*}
	In the next stage, we consider the case with $|x| \geqslant 1$. Due to the radially symmetric property of the functions in the parenthesis of \eqref{H-FourierMultiplier} with respect to $\eta$, the multiplier $\mathcal{H}(t,x)$ is also radially symmetric with respect to $x$. The employment of the modified Bessel's functions from Proposition \ref{Modified-Bessel-Funct} gives
	\begin{equation}
		\mathcal{H}(t,x)=c \int_0^\ity \mathrm{e}^{-c_1 r^2}r^{\alpha+n-1}\frac{\sin\left(c_2 r^2 g_{\rm H}\left(t^{-\frac{1}{2}}r\right)\right)}{g_{\rm H}\left(t^{-\frac{1}{2}}r\right)}\chi_{\rm H}\left(t^{-\frac{1}{2}}r\right)\tilde{\mathcal{J}}_{\frac{n}{2}-1}(r|x|) {\rm d}r. \label{l3.1.9}
	\end{equation}
    Because the value of $\tilde{\ml{J}}_{\frac{n}{2}-1}(s)$ for minimal dimensions for the odd case (that is $n=3$) and the even case (that is $n=2$) are different, i.e. the properties (3) and (4) in Proposition \ref{Modified-Bessel-Funct}, we will divide our discussion into two circumstances.
    \begin{itemize}
	\item[$\bullet$] \textit{\textbf{Case 1:} Let us consider odd dimensions $n=2m+1$ with $m \geqslant 1$.} The one-dimensional case will be explained in Remark \ref{Rem_n=1}. We introduce the vector field 
	\begin{align*}
			\mathcal{V}(h(r)):= \f{\rm d}{{\rm d}r}\left(\frac{1}{r}h(r)\right)
	\end{align*}
	 as those in \cite{Nara-Reissig=2013} and then perform $m+1$ steps of integration by parts to express \eqref{l3.1.9} in the form
	\begin{align}
		\mathcal{H}(t,x)&= \frac{c}{|x|^n} \int_0^\ity \partial_r \left(\mathcal{V}^{(m)} \left( \mathrm{e}^{-c_1 r^2}\frac{\sin\left(c_2 r^2 g_{\rm H}\left(t^{-\frac{1}{2}}r\right)\right)}{g_{\rm H}\left(t^{-\frac{1}{2}}r\right)}\chi_{\rm H}\left(t^{-\frac{1}{2}}r\right) r^{\alpha+2m}\right)\right) \sin(r|x|){\rm d}r \notag \\
		&=\sum_{\substack{1\leqslant j+k \leqslant m+1\\ j,\,k \geqslant 0}} \frac{c_{jk}}{|x|^n} \int_0^\ity \partial_r^{j+1} \mathrm{e}^{-c_1 r^2} \partial^k_r \left(\frac{\sin\left(c_2 r^2 g_{\rm H}\left(t^{-\frac{1}{2}}r\right)\right)}{g_{\rm H}\left(t^{-\frac{1}{2}}r\right)}\chi_{\rm H}\left(t^{-\frac{1}{2}}r\right)\right) r^{j+k+\alpha} \sin(r|x|){\rm d}r \label{l3.1.10}
	\end{align}
	with some suitable constants $c_{jk}>0$.	To control the function $\mathcal{H}(t,x)$, at first we need to affirm the following auxiliary estimates on the support of $\chi_{\rm H}\left(t^{-\frac{1}{2}}r\right)$ and on the support of its partial derivatives with respect to $r$:
	\begin{align}
		\left|\partial_r^j \mathrm{e}^{-c_1 r^2}\right|
		&\lesssim
		\begin{cases}
			\mathrm{e}^{-c_1 r^2} &\text{if}\ \ j =0, \\
			\mathrm{e}^{-c_1 r^2}r^{2-j}(1+r^2)^{j-1} &\text{if}\ \ j=1,\dots,m,
		\end{cases} \notag\\
		\left|\partial^k_r \left(\frac{\sin\left(c_2 r^2 g_{\rm H}\left(t^{-\frac{1}{2}}r\right)\right)}{g_{\rm H}\left(t^{-\frac{1}{2}}r\right)}\chi_{\rm H}\left(t^{-\frac{1}{2}}r\right)\right)\right|
		&\lesssim  \begin{cases}
			\min\{1,r^2\} &\text{if}\ \ k =0, \\
			r^{2-k} (1+r^2)^{k-1} &\text{if}\ \ k=1,\dots,m.
		\end{cases}\label{Est_sup}
	\end{align}
	Indeed, it is obvious to show the first three estimates in the above by straightforward computations. For the remaining one, we mainly divide our verification into two steps by  employing Fa\`{a} di Bruno's formula (see Proposition \ref{FadiBruno'sformula1}) in two components. To be specific, we may derive it by two steps.
		
	\noindent \underline{Step 1}: The application of Proposition \ref{FadiBruno'sformula1} with $h(s)=\sqrt{s}$ and $\psi(r)=1-\epsilon^2+tr^{-2}$ leads to
	\begin{align*}
		\left|\partial_r^k g_{\rm H}\left(t^{-\frac{1}{2}}r\right)\right| &\lesssim \left| \sum_{\substack{1\cdot m_1+\cdots+k\cdot m_k=k,\, m_i \geqslant 0}} \psi(r)^{\frac{1}{2}-(m_1+\cdots+m_k)}\prod_{j=1}^k \left(tr^{-2-j}\right)^{m_j}\right|\\
		&\lesssim \sum_{\substack{1\cdot m_1+\cdots+k\cdot m_k=k,\, m_i \geqslant 0}} \left(tr^{-2}\right)^{m_1+\cdots+m_k} r^{-k}\lesssim r^{-k},
	\end{align*}
	where we used $1-\epsilon^2 \leqslant \psi(r) \leqslant C$  and $tr^{-2} \leqslant C$ for $r \gg t^{\frac{1}{2}}$ in the last two chains, respectively. Moreover, by the same procedure one achieves
	\begin{align}
		\left| \partial_r^k \left(\frac{1}{g_{\rm H}\left(t^{-\frac{1}{2}}r\right)} \right)\right|  \lesssim r^{-k} \ \ \text{for}\ \ k=1,\dots,m. \label{l3.3.3}
	\end{align}
		
	\noindent \underline{Step 2}: The use of Proposition \ref{FadiBruno'sformula1} with $h(s)=\sin(c_2s)$ and $\psi(r)=r^2 g_{\rm H}\left(t^{-\frac{1}{2}}r\right)$ yields that
	\begin{align*}
		&\left|\partial_r^k \sin\left(c_2 r^2 g_{\rm H}\left(t^{-\frac{1}{2}}r\right)\right)\right| \\
		&\qquad \lesssim \left|\sum_{\substack{1\cdot m_1+\cdots+k\cdot m_k=k,\, m_i \geqslant 0}} \left(\sin\left(c_2 r^2 g_{\rm H}\left(t^{-\frac{1}{2}}r\right)\right)\right)^{(m_1+\cdots+m_k)} \,\, \prod_{j=1}^k \left(\partial_r^j \left(r^2 g_{\rm H}\left(t^{-\frac{1}{2}}r\right)\right)\right)^{m_j} \right| \\
		&\qquad \lesssim \left| \sum_{\substack{1\cdot m_1+\cdots+k\cdot m_k=k,\, m_i \geqslant 0}} \,\, \prod_{j=1}^k \left(\sum_{\ell=0}^j  r^{2-j+\ell} \partial_r^\ell g_{\rm H}\left(t^{-\frac{1}{2}}r\right) \right)^{m_j} \right|,
	\end{align*}
	and it follows
	\begin{align}
        \left|\partial_r^k \sin\left(c_2 r^2 g_{\rm H}\left(t^{-\frac{1}{2}}r\right)\right)\right|			& \lesssim \sum_{\substack{1\cdot m_1+\cdots+k\cdot m_k=k,\, m_i \geqslant 0}} \,\, \prod_{j=1}^k (r^{2-j})^{m_j }\notag\\
        & \lesssim r^{2-k} \sum_{\substack{1\cdot m_1+\cdots+k\cdot m_k=k,\, m_i \geqslant 0}} r^{2(m_1+\cdots+m_k-1)} \notag \\
			& \lesssim r^{2-k} \big(1+ r^{2(k-1)}\big) \lesssim r^{2-k} \big(1+r^2\big)^{k-1}. \label{l3.3.4}
	\end{align}
	Then, we use the product rule for higher-order derivatives linked to \eqref{l3.3.3} and \eqref{l3.3.4} to conclude
	\begin{align*}
		\left| \partial_r^k \left(\frac{\sin\left(c_2 r^2 g_{\rm H}\left(t^{-\frac{1}{2}}r\right)\right)}{g_{\rm H}\left(t^{-\frac{1}{2}}r\right)}\right) \right| \lesssim r^{2-k}(1+r^2)^{k-1} \ \  \text{for}\ \  k=1,\dots,m.
	\end{align*}
	As a consequence, we can see that the last estimate verifies the assertion \eqref{Est_sup} for $k=1,\dots,m$.
		
	From the derived auxiliary estimates, we obtain
	\begin{align*}
		&\left|\partial_r^{j+1} \mathrm{e}^{-c_1 r^2}\partial^k_r \left(\frac{\sin\left(c_2 r^2 g_{\rm H}\left(t^{-\frac{1}{2}}r\right)\right)}{g_{\rm H}\left(t^{-\frac{1}{2}}r\right)}\chi_{\rm H}\left(t^{-\frac{1}{2}}r\right)\right) r^{j+k+\alpha}\right| \\
		&\qquad \lesssim
		\begin{cases}
			\mathrm{e}^{-c_1 r^2} r^{\alpha+1}(1+r^2)^j &\text{if} \ \ k =0, \\
			\mathrm{e}^{-c_1 r^2} r^{\alpha+3}(1+r^2)^{k+j-1} &\text{if}\ \ k=1,\dots,m.
		\end{cases}
	\end{align*}
	
	Let us now come back to the aimed estimate for $\|\ml{H}(t,\cdot)\|_{L^1(|x|\geqslant 1)}$. For $k=0$, we divide the integral in \eqref{l3.1.10} into two sub-integrals from $t^{\frac{1}{2}}$ to $\pi/(2|x|)$ and from $\pi/(2|x|)$ to $\ity$.	We remark that the support of $\chi_{\mathrm{H}}(t^{-\frac{1}{2}}r)$ localizes in $\{(t,r)\in\mb{R}^2_+: t^{-\frac{1}{2}}r>2(1-\epsilon^2)^{-2} \}$ so that $t^{\frac{1}{2}}<\pi/(2|x|)$ always holds for $r<\pi/(2|x|)$. For one thing, one has
	\begin{align}
		&\left|\int_{t^{\frac{1}{2}}}^{\frac{\pi}{2|x|}} \partial_r^{j+1} \mathrm{e}^{-c_1 r^2} \frac{\sin\left(c_2 r^2 g_{\rm H}\left(t^{-\frac{1}{2}}r\right)\right)}{g_{\rm H}\left(t^{-\frac{1}{2}}r\right)}\chi_{\rm H}\left(t^{-\frac{1}{2}}r\right)r^{j+\alpha} \sin(r|x|){\rm d}r \right| \notag \\
		&\qquad \lesssim \int_{t^{\frac{1}{2}}}^{\frac{\pi}{2|x|}} r^{\alpha+1}(1+r^2)^j {\rm d}r \lesssim \frac{1}{|x|^{\alpha+2}}+ \frac{1}{|x|^{\alpha+2(j+1)}} \lesssim \frac{1}{|x|^{\alpha+2}}. \label{l3.1.11}
	\end{align}
	For another, performing one more step of integration by parts we have
	\begin{align*}
		\widetilde{H}(t,x)&:=\left|\int_{\frac{\pi}{2|x|}}^\ity \partial_r^{j+1} \mathrm{e}^{-c_1 r^2} \frac{\sin\left(c_2 r^2 g_{\rm H}\left(t^{-\frac{1}{2}}r\right)\right)}{g_{\rm H}\left(t^{-\frac{1}{2}}r\right)}\chi_{\rm H}\left(t^{-\frac{1}{2}}r\right)r^{j+\alpha} \sin(r|x|){\rm d}r \right| \\
		& \,\,\lesssim \frac{1}{|x|}\left| \partial_r^{j+1} \mathrm{e}^{-c_1 r^2} \frac{\sin\left(c_2 r^2 g_{\rm H}\left(t^{-\frac{1}{2}}r\right)\right)}{g_{\rm H}\left(t^{-\frac{1}{2}}r\right)}\chi_{\rm H}\left(t^{-\frac{1}{2}}r\right)r^{j+\alpha} \cos(r|x|) \right |_{r=\frac{\pi}{2|x|}}^{r=\ity} \\
		&\,\,\quad + \frac{1}{|x|}\int_{\frac{\pi}{2|x|}}^\ity \left|\partial_r \left(\partial_r^{j+1} \mathrm{e}^{-c_1 r^2} \frac{\sin\left(c_2 r^2 g_{\rm H}\left(t^{-\frac{1}{2}}r\right)\right)}{g_{\rm H}\left(t^{-\frac{1}{2}}r\right)}\chi_{\rm H}\left(t^{-\frac{1}{2}}r\right)r^{j+\alpha}\right) \cos(r|x|)\right| {\rm d}r, 
	\end{align*}
	and then
	\begin{align}
		\widetilde{H}(t,x)& \lesssim \frac{1}{|x|}\int_{\frac{\pi}{2|x|}}^\ity \mathrm{e}^{-c_1 r^2} r^{\alpha}(1+r^2)^{j+1} {\rm d}r \notag \\
		& \lesssim
		\frac{1}{|x|} \left( \int_{\frac{\pi}{2|x|}}^{\frac{\pi}{2}} r^{\alpha} {\rm d}r + \int_{\frac{\pi}{2}}^\ity \mathrm{e}^{-c_1 r^2} r^{2(j+1)+\alpha} {\rm d}r \right) \lesssim \frac{1}{|x|}, \label{l3.1.12}
	\end{align}
	where we noticed that
	\begin{align*}
		\left|\partial_r \left(\partial_r^{j+1} \mathrm{e}^{-c_1 r^2} \frac{\sin\left(c_2 r^2 g_{\rm H}\left(t^{-\frac{1}{2}}r\right)\right)}{g_{\rm H}\left(t^{-\frac{1}{2}}r\right)}\chi_{\rm H}\left(t^{-\frac{1}{2}}r\right)r^{j+\alpha}\right)\right| \lesssim \mathrm{e}^{-c_1 r^2} r^{\alpha}(1+r^2)^{j+1}.
		\end{align*}
	Regarding $k=1,\dots,m$, we may treat in an analogous procedure as we did for $k=0$ to claim the next estimates:
	\begin{equation} \label{l3.1.13}
		\left|\int_{t^{\frac{1}{2}}}^{\frac{\pi}{2|x|}} \partial_r^{j+1} \mathrm{e}^{-c_1 r^2} \partial^k_r \left(\frac{\sin\left(c_2 r^2 g_{\rm H}\left(t^{-\frac{1}{2}}r\right)\right)}{g_{\rm H}\left(t^{-\frac{1}{2}}r\right)}\chi_{\rm H}\left(t^{-\frac{1}{2}}r\right)\right) r^{j+k+\alpha} \sin(r|x|){\rm d}r\right| \lesssim \frac{1}{|x|^{\alpha+4}}, 
	\end{equation}
	together with
	\begin{equation} \label{l3.1.14}
		\left|\int_{\frac{\pi}{2|x|}}^\ity \partial_r^{j+1} \mathrm{e}^{-c_1 r^2} \partial^k_r \left(\frac{\sin\left(c_2 r^2 g_{\rm H}\left(t^{-\frac{1}{2}}r\right)\right)}{g_{\rm H}\left(t^{-\frac{1}{2}}r\right)}\chi_{\rm H}\left(t^{-\frac{1}{2}}r\right)\right) r^{j+k+\alpha} \sin(r|x|){\rm d}r \right| \lesssim \frac{1}{|x|},
	\end{equation}
	where we also noted that
	\begin{align*}
		\left|\partial_r \left( \partial_r^{j+1} \mathrm{e}^{-c_1 r^2} \partial^k_r \left(\frac{\sin\left(c_2 r^2 g_{\rm H}\left(t^{-\frac{1}{2}}r\right)\right)}{g_{\rm H}\left(t^{-\frac{1}{2}}r\right)}\chi_{\rm H}\left(t^{-\frac{1}{2}}r\right)\right) r^{j+k+\alpha} \sin(r|x|)\right)\right| \lesssim \mathrm{e}^{-c_1 r^2} r^{\alpha+2}(1+r^2)^{k+j}.
	\end{align*}
	For these reasons, collecting the derived estimates from \eqref{l3.1.11} to \eqref{l3.1.14} we have established the terms $|x|^{-(n+\alpha+2)}$, $|x|^{-(n+1)}$ and $|x|^{-(n+\alpha+4)}$ for the estimate of $|\ml{H}(t,x)|$, which guarantee the $L^1$ integrablity with respect to $x$. Therefore, we conclude the estimate for $n=2m+1$ such that
	\begin{align*}
		\|\ml{M}(t,\cdot)\|_{L^1(|x| \geqslant 1)}\lesssim t^{-\frac{\alpha}{2}}.
	\end{align*}
	\item[$\bullet$] \textit{\textbf{Case 2:} Let us consider even dimensions $n=2m$ with $m \geqslant 1$.} We carry out $m-1$ steps of integration by parts to express \eqref{l3.1.9} in the form
	\begin{align*}
		\mathcal{H}(t,x) &= \frac{c}{|x|^{2m-2}} \int_0^\ity \mathcal{V}^{(m-1)} \left( \mathrm{e}^{-c_1 r^2}\frac{\sin\left(c_2 r^2 g_{\rm H}\left(t^{-\frac{1}{2}}r\right)\right)}{g_{\rm H}\left(t^{-\frac{1}{2}}r\right)}\chi_{\rm H}\left(t^{-\frac{1}{2}}r\right) r^{\alpha+2m-1}\right) \tilde{\mathcal{J}}_0 (r|x|){\rm d}r \\
		&=\sum_{j=0}^{m-1}\frac{c_j}{|x|^{2m-2}} \int_0^\ity \partial_r^j \left( \mathrm{e}^{-c_1 r^2}\frac{\sin\left(c_2 r^2 g_{\rm H}\left(t^{-\frac{1}{2}}r\right)\right)}{g_{\rm H}\left(t^{-\frac{1}{2}}r\right)}\chi_{\rm H}\left(t^{-\frac{1}{2}}r\right) r^{\alpha}\right) r^{j+1} \tilde{\mathcal{J}}_0 (r|x|){\rm d}r \\
		&=:\sum_{j=0}^{m-1} c_j \mathcal{I}_j(t,x),
	\end{align*}
    with some suitable constants $c_j>0$. Employing the first rule of the modified Bessel's functions for $\mu=1$ and the fifth rule for $\mu=0$ from Proposition \ref{Modified-Bessel-Funct} and then performing two more steps of integration by parts we arrive at
    \begin{align*}
    	|\mathcal{I}_0(t,x)|= \frac{1}{|x|^n} \int_0^\ity \left|\partial_r \left( \partial_r \left( \mathrm{e}^{-c_1 r^2}\frac{\sin\left(c_2 r^2 g_{\rm H}\left(t^{-\frac{1}{2}}r\right)\right)}{g_{\rm H}\left(t^{-\frac{1}{2}}r\right)}\chi_{\rm H}\left(t^{-\frac{1}{2}}r\right) r^{\alpha}\right) r \right) \tilde{\mathcal{J}}_0 (r|x|)\right| {\rm d}r. 
    \end{align*}
	For $j=1,\dots,m$, one may verify the next relation on the support of $\chi_{\mathrm{H}}(t^{\frac{1}{2}}r)$ and on the support of its derivatives:
	\begin{align*}
		\left| \partial^j_r \left( \mathrm{e}^{-c_1 r^2}\frac{\sin\left(c_2 r^2 g_{\rm H}\left(t^{-\frac{1}{2}}r\right)\right)}{g_{\rm H}\left(t^{-\frac{1}{2}}r\right)}\chi_{\rm H}\left(t^{-\frac{1}{2}}r\right) r^{\alpha}\right)\right| \lesssim \mathrm{e}^{-c_1 r^2} (1+ r^2)^{j-1}r^{\alpha+2-j}.
	\end{align*}
	Consequently, it results
	\begin{align*}
		\left|\partial_r \left( \partial_r \left( \mathrm{e}^{-c_1 r^2}\frac{\sin\left(c_2 r^2 g_{\rm H}\left(t^{-\frac{1}{2}}r\right)\right)}{g_{\rm H}\left(t^{-\frac{1}{2}}r\right)}\chi_{\rm H}\left(t^{-\frac{1}{2}}r\right) r^{\alpha}\right) r \right)\right| \lesssim \mathrm{e}^{-c_1 r^2} (1+ r^2)r^{\alpha+1}.
	\end{align*}
	Due to the property that $|\tilde{\mathcal{J}}_0(s)| \leqslant C$ for $s \in [0,1]$, we get the estimate for $t^{\frac{1}{2}}<1/|x|$
	\begin{align}
		&\int_{t^{\frac{1}{2}}}^{\frac{1}{|x|}} \left|\partial_r \left( \partial_r \left( \mathrm{e}^{-c_1 r^2}\frac{\sin\left(c_2 r^2 g_{\rm H}\left(t^{-\frac{1}{2}}r\right)\right)}{g_{\rm H}\left(t^{-\frac{1}{2}}r\right)}\chi_{\rm H}\left(t^{-\frac{1}{2}}r\right) r^{\alpha}\right) r \right) \tilde{\mathcal{J}}_0(r|x|)\right| {\rm d}r \notag \\
		&\qquad \lesssim \int_{t^{\frac{1}{2}}}^{\frac{1}{|x|}} \mathrm{e}^{-c_1 r^2} r^{\alpha+1}{\rm d}r\lesssim \int_{t^{\frac{1}{2}}}^{\frac{1}{|x|}}r^{\alpha+1}{\rm d}r \lesssim \frac{1}{|x|^{\alpha+2}}. \label{l3.1.16}
	\end{align}
	In addition, because $|\tilde{\mathcal{J}}_0(s)| \leqslant Cs^{-\frac{1}{2}}$ for $s>1$, one finds
	\begin{align}
		&\int_{\frac{1}{|x|}}^\ity \left|\partial_r \left( \partial_r \left( \mathrm{e}^{-c_1 r^2}\frac{\sin\left(c_2 r^2 g_{\rm H}\left(t^{-\frac{1}{2}}r\right)\right)}{g_{\rm H}\left(t^{-\frac{1}{2}}r\right)}\chi_{\rm H}\left(t^{-\frac{1}{2}}r\right) r^{\alpha}\right) r \right) \tilde{\mathcal{J}}_0(r|x|)\right| {\rm d}r \notag \\
		&\qquad\lesssim \frac{1}{|x|^{\frac{1}{2}}} \int_{\frac{1}{|x|}}^\ity \mathrm{e}^{-c_1 r^2} (1+ r^2)r^{\alpha+\frac{1}{2}} {\rm d}r \notag \\
		&\qquad\lesssim \frac{1}{|x|^{\frac{1}{2}}} \left(\int_{\frac{1}{|x|}}^1 r^{\alpha+\frac{1}{2}}{\rm d}r+ \int_1^\ity \mathrm{e}^{-c_1 r^2}r^{\alpha+\frac{5}{2}}{\rm d}r\right) \lesssim \frac{1}{|x|^{\frac{1}{2}}}. \label{l3.1.17}
	\end{align}
	By combining \eqref{l3.1.16} and \eqref{l3.1.17} we have derived the terms $|x|^{-(n+\alpha+2)}$ and $|x|^{-(n+\frac{1}{2})}$ on the estimate of $|\ml{I}_0(t,x)|$, which ensure the $L^1$ integrable property with respect to $x$. For this reason, we are able to deduce that
	\begin{align*}
		\|\mathcal{I}_0(t,\cdot)\|_{L^1(|x|\geqslant 1)} \lesssim 1.
	\end{align*}
	Let $j \in [1,m-1]$ be an integer. Then, repeating several above arguments one also arrives at
	\begin{align*}
		\|\mathcal{I}_j(t,\cdot)\|_{L^1(|x|\geqslant 1)} \lesssim 1.
	\end{align*}
	Eventually, we conclude the following estimate for $n=2m$:
	\begin{align*}
		\left\|\ml{M}(t,\cdot)\right\|_{L^1(|x| \geqslant 1)} \lesssim t^{-\frac{\alpha}{2}}\|\ml{H}(t,\cdot)\|_{L^1(|x|\geqslant 1)}\lesssim t^{-\frac{\alpha}{2}}.
	\end{align*}
	\end{itemize} 
	Summarizing the last estimates, Lemma \ref{Key_Lemma} is proved completely.
\end{proof}

\begin{remark}\label{Rem_n=1}
    By following some steps in our proof for odd dimensional cases without using the vector field $\ml{V}(h(r))$, we may state that the corresponding estimate for $n=1$ is still valid.
\end{remark}

    Following the proof of Lemma \ref{Key_Lemma} we may also conclude the following $L^1$ estimates, which are easier than those of Lemma \ref{Key_Lemma} since the denominator $g_{\mathrm{H}}(|\xi|)$ does not occur.
\begin{lemma} \label{Cos-Sin_Lemma}
	Let $\alpha\geqslant0$. The following estimates hold in $\R^n$ for any $n \geqslant 1$:
	\begin{align*}
		\left\|
		\ml{F}^{-1}_{\xi\to x}\left(\mathrm{e}^{-c_1 |\xi|^2 t}|\xi|^{\alpha} \sin \left(c_2 |\xi|^2 g_{\rm H}(|\xi|) t\right)\chi_{\rm H}(|\xi|)\right)(t,\cdot)\right\|_{L^1}
		&\lesssim t^{-\frac{\alpha}{2}},\\ 
		\left\|
		\ml{F}^{-1}_{\xi\to x}\left(\mathrm{e}^{-c_1 |\xi|^2 t}|\xi|^{\alpha} \cos \left(c_2 |\xi|^2 g_{\rm H}(|\xi|) t\right)\chi_{\rm H}(|\xi|)\right)(t,\cdot)\right\|_{L^1}
		&\lesssim t^{-\frac{\alpha}{2}},
	\end{align*}
	where $c_1$ is a positive constant and $c_2 \ne 0$ is a real constant.
\end{lemma}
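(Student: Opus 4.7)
The plan is to mirror the argument of Lemma \ref{Key_Lemma} with two simplifications: the denominator $g_{\rm H}(|\xi|)$ is absent, and we must additionally handle the cosine variant. First I would perform the same scaling $\xi = t^{-1/2}\eta$ followed by $t^{-1/2}x\mapsto x$, which peels off the prefactor $t^{-\alpha/2}$ (exactly as in the derivation preceding \eqref{H-FourierMultiplier}) and reduces the problem to uniform $L^1$ bounds on
\[
\widetilde{\mathcal{H}}(t,x):= \ml{F}^{-1}_{\eta\to x}\Bigl(\mathrm{e}^{-c_1|\eta|^2}|\eta|^{\alpha}\,\Sigma\bigl(c_2|\eta|^2 g_{\rm H}(t^{-1/2}|\eta|)\bigr)\,\chi_{\rm H}(t^{-1/2}|\eta|)\Bigr)(t,x),
\]
where $\Sigma\in\{\sin,\cos\}$. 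The bound $\|\widetilde{\mathcal{H}}(t,\cdot)\|_{L^1(|x|\le 1)}\lesssim 1$ is immediate from the Gaussian factor $\mathrm{e}^{-c_1|\eta|^2}$, exactly as before.

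For $|x|\ge 1$ I would pass to polar coordinates and the modified Bessel representation from Proposition \ref{Modified-Bessel-Funct}, yielding
\[
\widetilde{\mathcal{H}}(t,x)=c\int_0^{\ity}\mathrm{e}^{-c_1 r^2}r^{\alpha+n-1}\Sigma\!\bigl(c_2 r^2 g_{\rm H}(t^{-1/2}r)\bigr)\,\chi_{\rm H}(t^{-1/2}r)\,\tilde{\ml{J}}_{\frac{n}{2}-1}(r|x|)\,{\rm d}r.
\]
Then I would split into odd and even dimensions and perform the same $m+1$ (respectively $m-1$) steps of integration by parts via the vector field $\ml{V}(h(r))=\frac{\rm d}{{\rm d}r}(r^{-1}h(r))$ that were used in Lemma \ref{Key_Lemma}. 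The auxiliary bounds on Gaussian derivatives carry over verbatim; for the oscillatory factor I would reuse \eqref{l3.3.4} and the Leibniz rule to obtain
\[
\bigl|\partial_r^k \Sigma\bigl(c_2 r^2 g_{\rm H}(t^{-1/2}r)\bigr)\bigr|\lesssim r^{2-k}(1+r^2)^{k-1},\qquad k=1,\ldots,m,
\]
which is in fact cleaner than the bound \eqref{Est_sup} since no factor of $1/g_{\rm H}$ arises. For the cosine case I would observe that Fa\`a di Bruno's formula applied to $h(s)=\cos(c_2 s)$ produces exactly the same polynomial majorants; the sole difference is the shape of the boundary term after one extra integration by parts against $\sin(r|x|)$ or $\cos(r|x|)$, and since both $\sin$ and $\cos$ are uniformly bounded this difference is inconsequential.

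With these pointwise estimates in hand, the remaining steps are the same pairs of splittings used in \eqref{l3.1.11}--\eqref{l3.1.14} (for odd $n$) and \eqref{l3.1.16}--\eqref{l3.1.17} (for even $n$), performed on the intervals $[t^{1/2},\pi/(2|x|)]$ and $[\pi/(2|x|),\ity)$ (or $[t^{1/2},1/|x|]$ and $[1/|x|,\ity)$ in even dimensions). Each produces decay of the form $|x|^{-(n+\alpha+2)}$, $|x|^{-(n+1)}$, $|x|^{-(n+\alpha+4)}$ (odd case) or $|x|^{-(n+\alpha+2)}$, $|x|^{-(n+1/2)}$ (even case), all of which are $L^1$-integrable on $\{|x|\ge 1\}$ uniformly in $t$. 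The one-dimensional case follows as in Remark \ref{Rem_n=1}, bypassing the vector field $\ml{V}$. Collecting the bounds on $|x|\le 1$ and $|x|\ge 1$ and reinstating the scaling factor $t^{-\alpha/2}$ yields the claim for both $\Sigma=\sin$ and $\Sigma=\cos$.

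The proof is essentially a simpler rerun of Lemma \ref{Key_Lemma}; the only minor point of care is the cosine case, where one must check that the integration-by-parts identity used to convert $\tilde{\ml{J}}_{n/2-1}$ into $\sin(r|x|)$ or $\cos(r|x|)$ produces factors that are still bounded — which is obvious — so no new obstacle arises.
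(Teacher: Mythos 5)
Your proposal is correct and follows essentially the same route as the paper, which itself disposes of Lemma \ref{Cos-Sin_Lemma} in one line by observing that one simply reruns the proof of Lemma \ref{Key_Lemma} without the denominator $g_{\rm H}(|\xi|)$. You supply more detail than the paper does (in particular the observation that the $k=0$ estimate in \eqref{Est_sup} only uses the bound by $1$, so the cosine case causes no loss), and all of it checks out.
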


\begin{prop} \label{L^1_High}
	Let $\alpha\geqslant0$.	The following estimates hold in $\R^n$ for any $n \geqslant 1$:
	\begin{align*}
		\left\|\ml{F}^{-1}_{\xi\to x}\left(|\xi|^\alpha \widehat{K}_j(t,|\xi|)\chi_{\rm H}(|\xi|)\right)(t,\cdot)\right\|_{L^1} &\lesssim t^{j-\frac{\alpha}{2}}
	\end{align*}
with $j=0,1$.
\end{prop}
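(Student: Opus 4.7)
The plan is to exploit the high-frequency identity $\sqrt{(1-\epsilon^2)|\xi|^2+1}=|\xi|\,g_{\rm H}(|\xi|)$, valid on $\operatorname{supp}\chi_{\rm H}$, so as to reduce $\widehat{K}_0\chi_{\rm H}$ and $\widehat{K}_1\chi_{\rm H}$ to linear combinations of the symbols already estimated in Lemma \ref{Key_Lemma} and Lemma \ref{Cos-Sin_Lemma}.

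For $j=0$, substituting this identity into \eqref{K_0-Formula} yields
\begin{align*}
|\xi|^\alpha\widehat{K}_0(t,|\xi|)\chi_{\rm H}(|\xi|) &= \mathrm{e}^{-\epsilon|\xi|^2 t}|\xi|^\alpha\cos\bigl(|\xi|^2 g_{\rm H}(|\xi|)t\bigr)\chi_{\rm H}(|\xi|) \\
&\quad + \epsilon\,\mathrm{e}^{-\epsilon|\xi|^2 t}|\xi|^\alpha\frac{\sin\bigl(|\xi|^2 g_{\rm H}(|\xi|)t\bigr)}{g_{\rm H}(|\xi|)}\chi_{\rm H}(|\xi|).
\end{align*}
I would then apply Lemma \ref{Cos-Sin_Lemma} to the cosine summand and Lemma \ref{Key_Lemma} to the sine summand (both with $c_1=\epsilon$, $c_2=1$), which supplies the $L^1$-bound $t^{-\alpha/2}$ for each, hence the claimed $t^{0-\alpha/2}$.

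For $j=1$, the same identity produces
\[
|\xi|^\alpha\widehat{K}_1(t,|\xi|)\chi_{\rm H}(|\xi|) = \mathrm{e}^{-\epsilon|\xi|^2 t}|\xi|^{\alpha-2}\frac{\sin\bigl(|\xi|^2 g_{\rm H}(|\xi|)t\bigr)}{g_{\rm H}(|\xi|)}\chi_{\rm H}(|\xi|).
\]
When $\alpha\geq 2$, Lemma \ref{Key_Lemma} applied with exponent $\alpha-2\geq 0$ directly delivers $t^{-(\alpha-2)/2}=t^{1-\alpha/2}$. The remaining range $0\leq \alpha < 2$, where Lemma \ref{Key_Lemma} cannot be invoked verbatim, is the main obstacle. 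I would handle it by noting that on $\operatorname{supp}\chi_{\rm H}$ the extra factor $|\xi|^{-2}$ is smooth and uniformly bounded with bounded derivatives, so the proof of Lemma \ref{Key_Lemma} goes through line by line with $r^\alpha$ replaced by $r^{\alpha-2}$: the rescaling $\xi=t^{-1/2}\eta$, the Fa\`{a} di Bruno derivative bounds \eqref{Est_sup} (which at worst gain an extra factor of $r^{-1}$ from differentiating $r^{\alpha-2}$), and the modified-Bessel integration-by-parts scheme all still produce $|x|$-tails integrable on $|x|\geq 1$. The essential point to verify is that the shifted analogues of \eqref{l3.1.11}--\eqref{l3.1.14} remain $L^1$-integrable with the same $t^{1-\alpha/2}$ scaling; the borderline case $\alpha=0$ may require one additional integration by parts, or a dyadic decomposition at infinity, to absorb a possible logarithmic loss.
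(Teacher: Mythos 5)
Your treatment of $j=0$ coincides with the paper's: on $\operatorname{supp}\chi_{\rm H}$ one has $|\xi|\sqrt{(1-\epsilon^2)|\xi|^2+1}=|\xi|^2g_{\rm H}(|\xi|)$ and $\epsilon|\xi|/\sqrt{(1-\epsilon^2)|\xi|^2+1}=\epsilon/g_{\rm H}(|\xi|)$, so \eqref{K_0-Formula} splits exactly into the two symbols covered by Lemma \ref{Cos-Sin_Lemma} and Lemma \ref{Key_Lemma}, each contributing $t^{-\alpha/2}$. Likewise your reduction of $j=1$ to $\ml{M}$ with exponent $\alpha-2$ is correct and settles the case $\alpha\geqslant 2$.

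The gap is the case $j=1$, $0\leqslant\alpha<2$, which you leave as a plan to rerun the proof of Lemma \ref{Key_Lemma} with $r^{\alpha-2}$ in place of $r^{\alpha}$. This is the delicate point and your sketch does not close it. After the rescaling $\xi=t^{-\frac{1}{2}}\eta$ the extra factor is $|\eta|^{\alpha-2}$ on a support reaching down to $|\eta|\gtrsim t^{\frac{1}{2}}$, so for small $t$ the symbol is genuinely singular at the origin; if one only uses $|\sin|\leqslant 1$, already the region $|x|\leqslant 1$ produces $\int_{|\eta|\gtrsim t^{1/2}}|\eta|^{\alpha-2}\mathrm{e}^{-c_1|\eta|^2}\mathrm{d}\eta$, which diverges as $t\downarrow 0$ for $n=1$, $\alpha<1$ and logarithmically for $n=2$, $\alpha=0$. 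The only rescue is the quadratic vanishing $|\sin(c_2|\eta|^2g_{\rm H})|\lesssim\min\{1,|\eta|^2\}$, which must then be propagated through every integration by parts in both the odd- and even-dimensional schemes; your remark that one "may need an additional integration by parts or a dyadic decomposition to absorb a possible logarithmic loss" signals that this bookkeeping has not been done, and in any case a logarithmic loss would not be acceptable since the proposition asserts the clean rate $t^{1-\frac{\alpha}{2}}$. The paper avoids all of this with the Newton--Leibniz representation
\begin{align*}
\widehat{K}_1(t,|\xi|)= t\int_0^1 \mathrm{e}^{-\epsilon|\xi|^2 t+i(2\theta-1)t|\xi|\sqrt{(1-\epsilon^2)|\xi|^2+1}}\,{\rm d}\theta,
\end{align*}
which extracts the factor $t$ explicitly and reduces $|\xi|^{\alpha}\widehat{K}_1\chi_{\rm H}$ to an average over $\theta$ of the denominator-free symbols of Lemma \ref{Cos-Sin_Lemma} (with $c_2=2\theta-1$), each bounded by $t^{-\frac{\alpha}{2}}$ uniformly in $\theta$; this yields $t^{1-\frac{\alpha}{2}}$ for every $\alpha\geqslant 0$ at once. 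You should either adopt this device or make the $\min\{1,|\eta|^2\}$ cancellation explicit throughout your adaptation of Lemma \ref{Key_Lemma}.
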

\begin{proof}
    Thanks to the representation formulas \eqref{K_0-Formula} and \eqref{K_1-Formula}, we may link Lemma \ref{Key_Lemma} with Lemma \ref{Cos-Sin_Lemma} to achieve the desired estimates in this proposition, where we also employed the relation
	\begin{align*}
		\widehat{K}_1(t,|\xi|)= t\int_0^1 \mathrm{e}^{-\epsilon|\xi|^2 t+i(2\theta-1)t|\xi|\sqrt{(1-\epsilon^2)|\xi|^2+1}}{\rm d}\theta 
	\end{align*}
	from the Newton-Leibniz formula.
\end{proof}

\begin{prop} \label{L^ity_High}
	Let $\alpha\geqslant0$. The following estimates hold in $\R^n$ for any $n \geqslant 1$:
	\begin{align*}
		\left\|\ml{F}_{\xi\to x}^{-1}\left(|\xi|^\alpha \widehat{K}_j(t,|\xi|)\chi_{\rm H}(|\xi|)\right)(t,\cdot)\right\|_{L^\ity} &\lesssim t^{j-\frac{n+\alpha}{2}}
	\end{align*}
    with $j=0,1$.
\end{prop}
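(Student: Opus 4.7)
The plan is to estimate the $L^\infty_x$-norm on the physical side by a weighted $L^1_\xi$-norm on the Fourier side via the elementary bound $\|\mathcal{F}^{-1}_{\xi\to x}(f)\|_{L^\infty_x}\leq(2\pi)^{-n}\|f\|_{L^1_\xi}$, which follows directly from the inverse Fourier formula. After this reduction it is enough to show
\begin{equation*}
\int_{\R^n}|\xi|^{\alpha}|\widehat{K}_j(t,|\xi|)|\chi_{\mathrm{H}}(|\xi|)\,\mathrm{d}\xi\lesssim t^{j-\frac{n+\alpha}{2}},\qquad j=0,1,
\end{equation*}
so that the argument is driven entirely by the Gaussian dissipation $\mathrm{e}^{-\epsilon|\xi|^2 t}$ and a single rescaling. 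In marked contrast to Lemma~\ref{Key_Lemma}, no fine oscillatory analysis on the Fourier side is needed, since once we work directly with $\widehat{K}_j$ the sine and cosine factors are merely bounded.

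For $j=0$, starting from \eqref{K_0-Formula} I would observe that both trigonometric factors are uniformly bounded and the prefactor satisfies $\epsilon|\xi|/\sqrt{(1-\epsilon^2)|\xi|^2+1}\leq \epsilon/\sqrt{1-\epsilon^2}$, so $|\widehat{K}_0(t,|\xi|)|\lesssim \mathrm{e}^{-\epsilon|\xi|^2 t}$. The change of variables $\xi=t^{-1/2}\eta$ then yields
\begin{equation*}
\int_{\R^n}|\xi|^{\alpha}\mathrm{e}^{-\epsilon|\xi|^2 t}\chi_{\mathrm{H}}(|\xi|)\,\mathrm{d}\xi\leq t^{-\frac{n+\alpha}{2}}\int_{\R^n}|\eta|^{\alpha}\mathrm{e}^{-\epsilon|\eta|^2}\,\mathrm{d}\eta\lesssim t^{-\frac{n+\alpha}{2}},
\end{equation*}
which is the claimed rate. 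For $j=1$, rather than trying to exploit the singular weight $|\xi|^{-1}\sqrt{(1-\epsilon^2)|\xi|^2+1}^{-1}$ directly (which would force splitting $t$ into small and large regimes to handle the small-argument behavior of $\sin$), I would reuse the Newton--Leibniz identity from the proof of Proposition~\ref{L^1_High}, namely
\begin{equation*}
\widehat{K}_1(t,|\xi|)=t\int_0^1 \mathrm{e}^{-\epsilon|\xi|^2 t+i(2\theta-1)t|\xi|\sqrt{(1-\epsilon^2)|\xi|^2+1}}\,\mathrm{d}\theta,
\end{equation*}
which yields the uniform bound $|\widehat{K}_1(t,|\xi|)|\leq t\,\mathrm{e}^{-\epsilon|\xi|^2 t}$. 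Plugging this into the same rescaling used for $j=0$ gains exactly one factor of $t$ and produces the rate $t^{1-(n+\alpha)/2}$ uniformly in $t>0$.

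I do not foresee any substantive obstacle here: the deep technical work was already done in Lemma~\ref{Key_Lemma} for the physical-side $L^1$ bound, and the dual $L^\infty$ bound becomes essentially trivial by passing to the Fourier side, where the sine factor turns into a harmless bounded multiplier. The only small item worth making explicit is that the crude Newton--Leibniz estimate $|\widehat{K}_1|\leq t\,\mathrm{e}^{-\epsilon|\xi|^2 t}$ is precisely what secures the correct behavior as $t\downarrow 0$ without splitting cases, since a direct pointwise bound using the denominator $|\xi|\sqrt{(1-\epsilon^2)|\xi|^2+1}$ would fail to reproduce the $t^{1-(n+\alpha)/2}$ scaling for small $t$ in low dimensions.
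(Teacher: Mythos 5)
Your proposal is correct and follows essentially the same route as the paper: both reduce the $L^\infty$ bound to an $L^1_\xi$ bound via the mapping property of the inverse Fourier transform, establish the pointwise bounds $|\widehat{K}_0(t,|\xi|)|\lesssim \mathrm{e}^{-c|\xi|^2t}$ and $|\widehat{K}_1(t,|\xi|)|\lesssim t\,\mathrm{e}^{-c|\xi|^2t}$ (the latter through the same Newton--Leibniz representation $\widehat{K}_1=t\,\mathrm{e}^{\lambda_+t}\int_0^1\mathrm{e}^{-2i\theta|\xi|t\sqrt{(1-\epsilon^2)|\xi|^2+1}}\,\mathrm{d}\theta$), and conclude by Gaussian scaling. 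Your write-up merely makes explicit the steps the paper compresses into ``using the asymptotic behavior of the characteristic roots.''
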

\begin{proof}
	It is clear to see that $\widehat{K}_1(t,|\xi|)$ can be written by
	\begin{align*}
	    \widehat{K}_1(t,|\xi|)= \mathrm{e}^{\lambda_+(|\xi|) t}\f{1- \mathrm{e}^{(\lambda_-(|\xi|)- \lambda_+(|\xi|))t}}{\lambda_+(|\xi|)-\lambda_-(|\xi|)}= t\,\mathrm{e}^{\lambda_+(|\xi|) t}\int_0^1 \mathrm{e}^{-2i\theta|\xi|t\sqrt{(1-\epsilon^2)|\xi|^2+1}}{\rm d}\theta.
	\end{align*}
	Then, using the asymptotic behavior of the characteristic roots we  realize
	\begin{align*}
		\left|\widehat{K}_0(t,|\xi|)\right| \lesssim \mathrm{e}^{-c|\xi|^2t}\ \ \mbox{and}\ \  \left|\widehat{K}_1(t,|\xi|)\right| \lesssim t\, \mathrm{e}^{-c|\xi|^2 t} 
	\end{align*}
	for large $|\xi|$, where $c$ is a suitable positive constant. Hence, with the aid of mapping property for the Fourier transformation, one may easily conclude that all the desired statements hold.
\end{proof}

    From Propositions \ref{L^1_High} and \ref{L^ity_High}, by employing the Riesz-Thorin interpolation theorem we immediately obtain the result. 
\begin{prop} \label{L^r_High}
    Let $\alpha\geqslant0$.	The following estimates hold in $\R^n$ for any $n \geqslant 1$:
	\begin{align*}
		\left\|\ml{F}^{-1}_{\xi\to x}\left(|\xi|^\alpha \widehat{K}_j(t,|\xi|)\chi_{\rm H}(|\xi|)\right)(t,\cdot)\right\|_{L^r} \lesssim t^{j-\frac{n}{2}(1-\frac{1}{r})- \frac{\alpha}{2}}
	\end{align*}
	with $j=0,1$ and $r\in [1,\ity]$.
\end{prop}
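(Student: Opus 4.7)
The plan is to obtain the full family of $L^r$ estimates by direct interpolation between the two endpoint bounds already established in Propositions \ref{L^1_High} and \ref{L^ity_High}. Writing $K_j^{\rm H}(t,x):=\ml{F}^{-1}_{\xi\to x}(|\xi|^\alpha\widehat{K}_j(t,|\xi|)\chi_{\rm H}(|\xi|))(t,x)$, the two previous propositions supply the endpoint estimates
\begin{equation*}
\|K_j^{\rm H}(t,\cdot)\|_{L^1}\lesssim t^{j-\frac{\alpha}{2}},\qquad \|K_j^{\rm H}(t,\cdot)\|_{L^\ity}\lesssim t^{j-\frac{n+\alpha}{2}}
\end{equation*}
for $j=0,1$ and $t>0$.

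For any intermediate $r\in(1,\ity)$, I would invoke the log-convexity of $L^p$ norms,
\begin{equation*}
\|K_j^{\rm H}(t,\cdot)\|_{L^r}\leqslant \|K_j^{\rm H}(t,\cdot)\|_{L^1}^{\frac{1}{r}}\|K_j^{\rm H}(t,\cdot)\|_{L^\ity}^{1-\frac{1}{r}},
\end{equation*}
which is the Riesz-Thorin interpolation theorem applied to the identity operator with endpoints $(p_0,p_1)=(1,\ity)$ and interpolation parameter $\theta=1/r$ (equivalently, one can view it as Hölder's inequality on $|K_j^{\rm H}|^{1/r}\cdot|K_j^{\rm H}|^{1-1/r}$). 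Substituting the two endpoint rates yields the exponent
\begin{equation*}
\frac{1}{r}\left(j-\frac{\alpha}{2}\right)+\left(1-\frac{1}{r}\right)\left(j-\frac{n+\alpha}{2}\right)=j-\frac{n}{2}\left(1-\frac{1}{r}\right)-\frac{\alpha}{2},
\end{equation*}
which is exactly the claimed decay rate. The endpoint cases $r=1$ and $r=\ity$ are just the input propositions, so no separate verification is required there.

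There is essentially no obstacle at this stage, because the real work was already absorbed into Lemma \ref{Key_Lemma} and Propositions \ref{L^1_High}–\ref{L^ity_High}. The cut-off $\chi_{\rm H}(|\xi|)$ together with the Gaussian factor $\mathrm{e}^{-\epsilon|\xi|^2 t}$ (for $t>0$) make $|\xi|^\alpha\widehat{K}_j(t,|\xi|)\chi_{\rm H}(|\xi|)$ a Schwartz symbol in $\xi$, so $K_j^{\rm H}(t,\cdot)$ is a genuine element of $L^1\cap L^\ity$ for each fixed $t>0$, and hence the log-convexity estimate applies unconditionally, completing the proposition.
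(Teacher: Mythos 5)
Your proposal is correct and coincides with the paper's own argument: the proposition is obtained by interpolating the $L^1$ bound of Proposition \ref{L^1_High} with the $L^\infty$ bound of Proposition \ref{L^ity_High} via Riesz--Thorin (equivalently, log-convexity of the $L^p$ norms), and your exponent computation matches the claimed rate. No further comment is needed.
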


\subsection{Estimates for low frequencies}
\begin{prop} \label{L^1_Low}
	Let $\alpha\geqslant0$. The following estimates hold in $\R^n$ for any $n \geqslant 1$:
	\begin{align*}
		\left\|\ml{F}^{-1}_{\xi\to x}\left(|\xi|^\alpha \widehat{K}_j(t,|\xi|)\chi_{\rm L}(|\xi|)\right)(t,\cdot)\right\|_{L^1} \lesssim (1+t)^{j+\frac{1}{2}[\frac{n}{2}-j]-\frac{\alpha}{2}}
	\end{align*}
with $j=0,1$
\end{prop}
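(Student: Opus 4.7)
The plan is to carry the integration-by-parts scheme of Lemma \ref{Key_Lemma} over to the low-frequency regime. Using \eqref{K_0-Formula}--\eqref{K_1-Formula} I would write
\[
|\xi|^\alpha \widehat{K}_j(t,|\xi|)\chi_{\rm L}(|\xi|) = |\xi|^\alpha\,\mathrm{e}^{-\epsilon|\xi|^2 t}\,Q_j(|\xi|)\,\mathrm{e}^{\pm i t |\xi|\, g_{\rm L}(|\xi|)} \chi_{\rm L}(|\xi|),
\]
where $g_{\rm L}(|\xi|) := \sqrt{(1-\epsilon^2)|\xi|^2 + 1}$ is smooth on $\mathrm{supp}\,\chi_{\rm L}$ and valued in $[\sqrt{1-\epsilon^2}, 1]$, and $Q_j$ is smooth. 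The singularity $|\xi|^{-1}$ in $\widehat{K}_1$ is removed by the Newton--Leibniz identity
\[
\widehat{K}_1(t,|\xi|) = t\int_0^1 \mathrm{e}^{-\epsilon|\xi|^2 t + i(2\theta-1)t|\xi|g_{\rm L}(|\xi|)}\,{\rm d}\theta,
\]
which isolates the prefactor $t$ responsible for the $t^{j}$-piece of the exponent.

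Next I would split the spatial domain as $\mathbb{R}^n_x = \{|x|\leq 1\} \cup \{|x|\geq 1\}$. On the bounded region, the compact support of $\chi_{\rm L}$ together with $\mathrm{e}^{-\epsilon|\xi|^2 t}\leq 1$ gives direct control of the $L^1$-norm, and the $t^{-\alpha/2}$ decay is extracted by splitting the frequency integral at $|\xi|=(1+t)^{-1/2}$ and using either $|\xi|^\alpha \leq (1+t)^{-\alpha/2}$ on the inner piece or $|\xi|^\alpha \mathrm{e}^{-\epsilon|\xi|^2 t} \lesssim (1+t)^{-\alpha/2}$ on the outer piece. On $|x|\geq 1$, I would pass to radial coordinates via Proposition \ref{Modified-Bessel-Funct} and imitate the two sub-cases of Lemma \ref{Key_Lemma}: for odd $n=2m+1$, apply the vector field $\ml{V}$ $m+1$ times to extract the factor $|x|^{-n}$ and then integrate by parts once more against $\sin(r|x|)$; for even $n=2m$, apply $\ml{V}$ $m-1$ times and exploit $|\tilde{\ml{J}}_0(s)|\lesssim s^{-1/2}$ for $s>1$ in the remaining integral, again with an additional integration by parts.

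The main obstacle, compared with Lemma \ref{Key_Lemma}, is that the phase $t r g_{\rm L}(r)$ is linear rather than quadratic in $r$ on $\mathrm{supp}\,\chi_{\rm L}$, so Fa\`a di Bruno's formula (Proposition \ref{FadiBruno'sformula1}), applied with $h(s)=\mathrm{e}^{\pm is}$ and $\psi(r)=t r g_{\rm L}(r)$, only yields $|\partial_r^k\mathrm{e}^{\pm i t r g_{\rm L}(r)}|\lesssim (1+t)^{k}$ on that support. Each step of integration by parts thus produces unwanted polynomial growth in $t$ that has to be balanced against gains from oscillation. The correct balance is obtained by pairing the growth $(1+t)^k$ with the $|x|^{-k}$ decay from a further integration by parts against $\sin(r|x|)$, and by using the Gaussian tail $\mathrm{e}^{-\epsilon r^2 t}$ together with the radial cutoff to control the remaining radial integrals. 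Careful bookkeeping of the odd- versus even-dimensional sub-cases, in direct parallel with Case 1 and Case 2 of Lemma \ref{Key_Lemma}, then produces the stated growth $(1+t)^{j+\frac{1}{2}[\frac{n}{2}-j]-\frac{\alpha}{2}}$.
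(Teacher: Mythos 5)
Your plan correctly identifies the new obstruction at low frequencies --- the phase $t\,r\,g_{\rm L}(r)$ is linear rather than quadratic in $r$, so every radial derivative of the oscillating factor costs a power of $t$ --- but the two mechanisms you propose for absorbing that growth do not close, and the paper does not in fact argue this way: it imports the low-frequency estimate from Statement 4 of Theorem 2.1 in \cite{Shibata=2000} (see also Proposition 3.7 in \cite{Dao-Reissig=2019-1}), borrowing from Lemma \ref{Key_Lemma} only the Bessel-function machinery. The first concrete problem is your treatment of $j=1$: converting $\widehat{K}_1$ via the Newton--Leibniz identity into $t$ times a bounded oscillating multiplier of the same type as the $j=0$ pieces can at best yield $(1+t)^{1+\frac{1}{2}[\frac{n}{2}]-\frac{\alpha}{2}}$, whereas the claimed exponent is $(1+t)^{1+\frac{1}{2}[\frac{n}{2}-1]-\frac{\alpha}{2}}$; since $[\frac{n}{2}]=[\frac{n}{2}-1]+1$, your reduction loses a factor $(1+t)^{1/2}$ in every dimension. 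The sharp bound requires exploiting the denominator $|\xi|\sqrt{(1-\epsilon^2)|\xi|^2+1}$ directly --- it effectively saves one integration by parts, each of which is worth $(1+t)^{1/2}$ after the parabolic rescaling --- rather than flattening it into a prefactor $t$.

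The second problem is the claim that the growth $(1+t)^k$ is ``paired with the $|x|^{-k}$ decay from a further integration by parts against $\sin(r|x|)$''. On the region $1\leqslant |x|\leqslant t$ the factor $|x|^{-k}$ does not compensate $(1+t)^k$, and this is exactly where the $L^1_x$ mass sits; the high-frequency argument of Lemma \ref{Key_Lemma} avoids this issue only because there the rescaled phase $c_2|\eta|^2 g_{\rm H}(t^{-1/2}|\eta|)$ has $t$-uniformly bounded derivatives. A workable version of your scheme must first rescale $\xi=t^{-1/2}\eta$, so that each derivative of the rescaled phase $t^{1/2}|\eta|\,g_{\rm L}(t^{-1/2}|\eta|)$ costs only $(1+t)^{1/2}$, then cap the number of integrations by parts at roughly $[\frac{n}{2}]+1-j$ and recover the remaining decay in $|x|$ from the asymptotics $\tilde{\ml{J}}_\mu(s)=\mathcal{O}(s^{-1/2})$ together with a $t$-dependent splitting of the spatial region. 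That bookkeeping --- which is carried out in \cite{Shibata=2000} and \cite{Dao-Reissig=2019-1} --- is precisely where the characteristic exponent $\frac{1}{2}[\frac{n}{2}-j]$ comes from; as written, your final sentence asserts this conclusion rather than deriving it.
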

\begin{proof}
	Following the same approach as Statement 4 (Theorem 2.1) in \cite{Shibata=2000} (see also Proposition 3.7 in \cite{Dao-Reissig=2019-1} for more general setting) combined with several similar steps to the proof of Lemma \ref{Key_Lemma} we may derive the desired estimates.
\end{proof}

\begin{prop} \label{L^ity_Low}
	Let $\alpha\geqslant0$. The following estimates hold in $\R^n$ for any $n \geqslant 1$:
	\begin{align*}
		\left\|\ml{F}^{-1}_{\xi\to x}\left(|\xi|^\alpha \widehat{K}_j(t,|\xi|)\chi_{\rm L}(|\xi|)\right)(t,\cdot)\right\|_{L^\ity} \lesssim (1+t)^{j-\frac{n+\alpha}{2}}
	\end{align*}
with $j=0,1$.
\end{prop}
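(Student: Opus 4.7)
The plan is to reduce everything to the elementary Hausdorff--Young bound
\[
\bigl\|\ml{F}^{-1}_{\xi\to x}(g)\bigr\|_{L^\infty} \lesssim \|g\|_{L^1_\xi},
\]
so that it suffices to estimate the $L^1_\xi$ norm of $|\xi|^\alpha\widehat{K}_j(t,|\xi|)\chi_{\rm L}(|\xi|)$. Since $\chi_{\rm L}$ is supported in $|\xi|\lesssim 1$, the quantity $\sqrt{(1-\epsilon^2)|\xi|^2+1}$ appearing in the representations \eqref{K_0-Formula}--\eqref{K_1-Formula} is comparable to a positive constant on this support, and the whole decay must be extracted from the factor $\mathrm{e}^{-\epsilon|\xi|^2 t}$.

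For $j=0$, using $|\cos|,|\sin|\leqslant 1$ and the fact that $\epsilon|\xi|/\sqrt{(1-\epsilon^2)|\xi|^2+1}$ is bounded on $\operatorname{supp}\chi_{\rm L}$, the formula \eqref{K_0-Formula} yields immediately
\[
\bigl|\widehat{K}_0(t,|\xi|)\bigr| \lesssim \mathrm{e}^{-\epsilon|\xi|^2 t}.
\]
For $j=1$, the crucial point is to absorb the low-frequency singularity $|\xi|^{-1}$ present in \eqref{K_1-Formula}. I would do this through the crude bound $|\sin s|\leqslant |s|$ applied with $s=|\xi|\sqrt{(1-\epsilon^2)|\xi|^2+1}\,t$, giving
\[
\bigl|\widehat{K}_1(t,|\xi|)\bigr| \lesssim t\,\mathrm{e}^{-\epsilon|\xi|^2 t},
\]
with the extra factor $t$ exactly accounting for the shift $j=1$ in the exponent of the claimed bound.

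It then remains to evaluate, in polar coordinates,
\[
\int_{|\xi|\lesssim 1} |\xi|^\alpha \mathrm{e}^{-\epsilon|\xi|^2 t}\,d\xi \;\lesssim\; \int_0^{c} r^{n+\alpha-1}\mathrm{e}^{-\epsilon r^2 t}\,dr.
\]
I would split into two regimes: for $t\leqslant 1$ the integral is trivially bounded by a constant, while for $t\geqslant 1$ the substitution $s=r\sqrt{t}$ produces the factor $t^{-(n+\alpha)/2}$ (the bound of integration $c\sqrt{t}$ may be extended to $+\infty$ since the Gaussian remains integrable). Combining the two regimes yields
\[
\int_{|\xi|\lesssim 1} |\xi|^\alpha \mathrm{e}^{-\epsilon|\xi|^2 t}\,d\xi \;\lesssim\; (1+t)^{-\frac{n+\alpha}{2}},
\]
and multiplying by the symbol bounds above gives the stated estimate in both cases $j=0,1$.

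Compared with the high-frequency proof (Lemma \ref{Key_Lemma}), the main obstacle here is essentially absent: there is no oscillatory-integral analysis to perform, because the low-frequency Gaussian decay is already integrable against a polynomial in $\xi$. The only subtlety is the $|\xi|^{-1}$ singularity in $\widehat{K}_1$, which is handled once and for all by $|\sin s|\leqslant |s|$; after that, the argument reduces to tracking the standard $t\lesssim 1$ versus $t\gtrsim 1$ split in order to produce the $(1+t)^{\,\cdot\,}$ factor rather than a plain $t^{\,\cdot\,}$.
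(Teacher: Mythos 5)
Your proposal is correct and takes essentially the same route as the paper: one passes to $\|\cdot\|_{L^1_\xi}$ via Hausdorff--Young, establishes the pointwise bounds $|\widehat{K}_0(t,|\xi|)|\lesssim \mathrm{e}^{-c|\xi|^2t}$ and $|\widehat{K}_1(t,|\xi|)|\lesssim t\,\mathrm{e}^{-c|\xi|^2t}$ on the support of $\chi_{\rm L}$, and concludes with the elementary estimate $\int_{|\xi|\lesssim 1}|\xi|^\alpha\mathrm{e}^{-c|\xi|^2t}\,\mathrm{d}\xi\lesssim(1+t)^{-\frac{n+\alpha}{2}}$. Your use of $|\sin s|\leqslant|s|$ to cancel the $|\xi|^{-1}$ singularity is just an explicit version of the paper's appeal to the asymptotics of the characteristic roots, so there is nothing substantively different.
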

\begin{proof}
	As in Proposition \ref{L^ity_High}, using the asymptotic behavior of the characteristic roots we may arrive at the relations
	\begin{align*}
	\left|\widehat{K}_0(t,|\xi|)\right| \lesssim \mathrm{e}^{-c|\xi|^2t}\ \ \mbox{and}\ \  \left|\widehat{K}_1(t,|\xi|)\right| \lesssim t\, \mathrm{e}^{-c|\xi|^2 t} 
\end{align*}
	for small $|\xi|$. Then, the application of the auxiliary inequality
	\begin{align*}
		\int_{\R^n} |\xi|^\alpha \mathrm{e}^{-c|\xi|^\beta t}\chi_{\rm L}(|\xi|){\rm d}\xi\lesssim\int_0^{1}r^{\alpha+n-1}\mathrm{e}^{-cr^{\beta}t}\mathrm{d}r \lesssim (1+t)^{-\frac{n+\alpha}{\beta}}
	\end{align*}
	for $n\geqslant 1$, where $\alpha>-n$ and $\beta>0$, provides immediately the desired estimates. 
\end{proof}

    The employment of the Riesz-Thorin interpolation theorem again between Propositions \ref{L^1_Low} and \ref{L^ity_Low} leads to conclude the result.
\begin{prop} \label{L^r_Low}
	Let $\alpha\geqslant0$. The following estimates hold in $\R^n$ for any $n \geqslant 1$:
	\begin{align*}
		\left\|\ml{F}^{-1}_{\xi\to x}\left(|\xi|^\alpha \widehat{K}_j(t,|\xi|)\chi_{\rm L}(|\xi|)\right)(t,\cdot)\right\|_{L^r} &\lesssim (1+t)^{j-\frac{n}{2}(1-\frac{1}{r})+\frac{1}{2r}[\frac{n}{2}-j]-\frac{\alpha}{2}}
	\end{align*}
	with $j=0,1$ and $r\in [1,\ity]$.
\end{prop}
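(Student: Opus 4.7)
The plan is to obtain this $L^r$ estimate as a direct interpolation between the two endpoint estimates already proved in Proposition \ref{L^1_Low} ($r=1$) and Proposition \ref{L^ity_Low} ($r=\infty$). Since the object in question is a single function of $x$ (for each fixed $t$), the cleanest route is to invoke the elementary Lyapunov inequality
\begin{align*}
\|f\|_{L^r} \leqslant \|f\|_{L^1}^{\frac{1}{r}}\,\|f\|_{L^\ity}^{1-\frac{1}{r}},
\end{align*}
which follows immediately from $\int|f|^r \leqslant \|f\|_{L^\ity}^{r-1}\|f\|_{L^1}$; equivalently, one may appeal to the Riesz--Thorin theorem as the paper suggests. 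No additional frequency analysis is needed, because the hard work (the localized stationary phase / Bessel-function estimates for low frequencies underlying Proposition \ref{L^1_Low}, and the elementary Gaussian pointwise bound for Proposition \ref{L^ity_Low}) has already been carried out.

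First, I would apply the above inequality to $f(x):=\ml{F}^{-1}_{\xi\to x}\bigl(|\xi|^\alpha \widehat{K}_j(t,|\xi|)\chi_{\rm L}(|\xi|)\bigr)(t,x)$, and then substitute the bounds from the two endpoint propositions to obtain
\begin{align*}
\|f(t,\cdot)\|_{L^r} \lesssim (1+t)^{\frac{1}{r}\left(j+\frac{1}{2}\left[\frac{n}{2}-j\right]-\frac{\alpha}{2}\right)+\left(1-\frac{1}{r}\right)\left(j-\frac{n+\alpha}{2}\right)}.
\end{align*}
Next, I would collect the exponent algebraically. The $j$-contribution gives $\tfrac{j}{r}+(1-\tfrac{1}{r})j=j$; the $\tfrac{n+\alpha}{2}$-contribution gives $-(1-\tfrac{1}{r})\tfrac{n+\alpha}{2}=-\tfrac{n}{2}(1-\tfrac{1}{r})-\tfrac{\alpha}{2}(1-\tfrac{1}{r})$; and combining the remaining $\tfrac{1}{2r}[\tfrac{n}{2}-j]-\tfrac{\alpha}{2r}$ from the $L^1$ piece with the $\alpha$-part of the previous step yields $-\tfrac{\alpha}{2}$. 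Summing the three contributions produces exactly
\begin{align*}
j-\frac{n}{2}\left(1-\frac{1}{r}\right)+\frac{1}{2r}\left[\frac{n}{2}-j\right]-\frac{\alpha}{2},
\end{align*}
which is the claimed exponent. I would also note that the estimates for $j=0,1$ follow with the same interpolation scheme, since both endpoint propositions are stated uniformly in $j\in\{0,1\}$.

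There is essentially no substantial obstacle: the proposition is a soft consequence of the two endpoint lemmas, and the only task is the routine algebraic check above. The one thing that merits a brief comment is the well-posedness of this interpolation argument at the endpoints $r=1$ and $r=\ity$ themselves, which is immediate since the Lyapunov inequality (or, equivalently, Riesz--Thorin applied to the constant multiplication operator on an appropriate measure space) reduces there to the endpoint bounds. Thus the proof reduces to one display of interpolation followed by the exponent arithmetic.
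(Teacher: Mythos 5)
Your proposal is correct and takes essentially the same route as the paper, whose entire proof is a one-line appeal to Riesz--Thorin interpolation between Propositions \ref{L^1_Low} and \ref{L^ity_Low}; you merely make the interpolation inequality (via log-convexity of the $L^r$ norms) and the exponent arithmetic explicit, and that arithmetic checks out.
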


\subsection{Conclusion of $L^m-L^q$ estimates for solutions}
    Finally, to end this section we are going to state the $L^m-L^q$ estimates for solutions to the linearized Cauchy problem \eqref{Equation_Main_Linear}, which also provide some large-time behaviors of solutions in the $\dot{H}^s_q$ framework for any $q\geqslant 1$.
\begin{theorem} \label{Thm_L^m-L^q}
	Let $s\geqslant0$ and $1\leqslant m\leqslant q\leqslant \ity$. The solutions to \eqref{Equation_Main_Linear} fulfills the following $L^m-L^q$ estimates:
	\begin{align*}
		\|v(t,\cdot )\|_{\dot{H}^s_q}& \lesssim
		\begin{cases}
			t^{-\frac{n}{2}(1-\frac{1}{r})- \frac{s}{2}}\|v_0\|_{L^m}+ t^{1-\frac{n}{2}(1-\frac{1}{r})- \frac{s}{2}} \|v_1\|_{L^m} &\text{if} \ \  t\in(0,1], \\
			(1+t)^{-\frac{n}{2}(1-\frac{1}{r})+\frac{1}{2r}[\frac{n}{2}]-\frac{s}{2}} \|v_0\|_{L^m}+ (1+t)^{1-\frac{n}{2}(1-\frac{1}{r})+\frac{1}{2r}[\frac{n}{2}-1]-\frac{s}{2}} \|v_1\|_{L^m} &\text{if}\ \  t\in [1,\ity),
		\end{cases}\\
		\| v_t(t,\cdot )\|_{\dot{H}^s_q}& \lesssim \begin{cases}
			t^{-\frac{n}{2}(1-\frac{1}{r})-1- \frac{s}{2}}\|v_0\|_{L^m}+ t^{-\frac{n}{2}(1-\frac{1}{r})- \frac{s}{2}} \|v_1\|_{L^m} &\text{if} \ \  t\in(0,1], \\
			(1+t)^{-\frac{n}{2}(1-\frac{1}{r})+\frac{1}{2r}[\frac{n}{2}-1]-\frac{s}{2}} \|v_0\|_{L^m}+ (1+t)^{-\frac{n}{2}(1-\frac{1}{r})+\frac{1}{2r}[\frac{n}{2}]-\frac{s}{2}} \|v_1\|_{L^m} &\text{if}\ \  t\in [1,\ity),
		\end{cases}
	\end{align*}
	 for all $n\geqslant 1$, where $1+ 1/q= 1/r+1/m$.
\end{theorem}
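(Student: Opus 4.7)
\emph{Proof plan.} My starting point is the explicit representation $\hat{v}(t,\xi)=\widehat{K}_0(t,|\xi|)\hat{v}_0(\xi)+\widehat{K}_1(t,|\xi|)\hat{v}_1(\xi)$, split spectrally via $\chi_{\mathrm{L}}+\chi_{\mathrm{M}}+\chi_{\mathrm{H}}\equiv 1$. Young's inequality for convolution, with the relation $1+\tfrac{1}{q}=\tfrac{1}{r}+\tfrac{1}{m}$, reduces the $\dot{H}^s_q$ bound on $v$ to a sum of six terms of the form $\|\mathcal{F}^{-1}_{\xi\to x}(|\xi|^s\widehat{K}_j\chi_\natural)(t,\cdot)\|_{L^r}\|v_j\|_{L^m}$ with $j\in\{0,1\}$ and $\natural\in\{\mathrm{L},\mathrm{M},\mathrm{H}\}$. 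All six kernel norms are already available: Proposition \ref{L^r_High} with $\alpha=s$ on the high-frequency pieces, Proposition \ref{L^r_Low} with $\alpha=s$ on the low-frequency pieces, and \eqref{estimate_middle} on the middle-frequency pieces.

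Assembling these, the bound on $\|v\|_{\dot{H}^s_q}$ follows by identifying the dominant zone in each time regime. On $t\in(0,1]$ the middle- and low-frequency contributions are uniformly bounded while the high-frequency bound $t^{j-\frac{n}{2}(1-\frac{1}{r})-\frac{s}{2}}$ is singular, so it dominates. On $t\in[1,\infty)$ the middle-frequency part decays exponentially and the high-frequency part decays as $t^{j-\frac{n}{2}(1-\frac{1}{r})-\frac{s}{2}}$, which is strictly faster than the low-frequency rate $(1+t)^{j-\frac{n}{2}(1-\frac{1}{r})+\frac{1}{2r}[\frac{n}{2}-j]-\frac{s}{2}}$; hence the latter governs the asymptotics, yielding exactly the bound in the theorem.

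For the estimate of $v_t$, I would first derive the algebraic identities
\[
\partial_t \widehat{K}_0(t,|\xi|)=-|\xi|^2(1+|\xi|^2)\widehat{K}_1(t,|\xi|),\qquad \partial_t \widehat{K}_1(t,|\xi|)=\widehat{K}_0(t,|\xi|)-2\epsilon|\xi|^2\widehat{K}_1(t,|\xi|),
\]
which follow from the Vieta identities $\lambda_+\lambda_-=|\xi|^2(1+|\xi|^2)$ and $\lambda_++\lambda_-=-2\epsilon|\xi|^2$, or equivalently by differentiating \eqref{K_0-Formula}--\eqref{K_1-Formula}. The $v_0$-contribution to $|D|^s v_t$ is then controlled by applying Proposition \ref{L^r_High} to $K_1$ with shifted regularity $\alpha=s+4$ (using $1+|\xi|^2\sim|\xi|^2$ on $\mathrm{supp}\,\chi_{\mathrm{H}}$) and Proposition \ref{L^r_Low} with $\alpha=s+2$ (using $1+|\xi|^2\sim 1$ on $\mathrm{supp}\,\chi_{\mathrm{L}}$), producing exactly $t^{-1-\frac{n}{2}(1-\frac{1}{r})-\frac{s}{2}}$ and $(1+t)^{-\frac{n}{2}(1-\frac{1}{r})+\frac{1}{2r}[\frac{n}{2}-1]-\frac{s}{2}}$. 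The $v_1$-contribution is governed by $\widehat{K}_0-2\epsilon|\xi|^2\widehat{K}_1$; the two summands give the same high-frequency rate $t^{-\frac{n}{2}(1-\frac{1}{r})-\frac{s}{2}}$, while on low frequencies the $\widehat{K}_0$-term ($j=0$) has the slower decay $(1+t)^{-\frac{n}{2}(1-\frac{1}{r})+\frac{1}{2r}[\frac{n}{2}]-\frac{s}{2}}$ and dominates.

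The genuine analytical obstacle has already been resolved in Lemma \ref{Key_Lemma}, where the high-frequency oscillating kernel with singular denominator $g_{\mathrm{H}}(|\xi|)$ is controlled via a modified Bessel decomposition combined with Fa\`{a} di Bruno's formula. Beyond that, the present theorem is essentially bookkeeping over three frequency zones and two initial data; the only arithmetic checks are that the shifted parameters in Propositions \ref{L^r_High} and \ref{L^r_Low} line up with the floor exponents $[\tfrac{n}{2}]$ and $[\tfrac{n}{2}-1]$ as stated.
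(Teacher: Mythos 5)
Your proposal follows exactly the paper's own argument: Young's convolution inequality combined with Propositions \ref{L^r_High} and \ref{L^r_Low} (with $\alpha=s$, and with $\alpha$ shifted by the powers of $|\xi|$ coming from the identities $\partial_t\widehat{K}_0=-(|\xi|^2+|\xi|^4)\widehat{K}_1$ and $\partial_t\widehat{K}_1=\widehat{K}_0-2\epsilon|\xi|^2\widehat{K}_1$) together with the exponential bound \eqref{estimate_middle} on the middle zone. Your exponent bookkeeping for each frequency zone and each datum is consistent with the stated rates, and in fact spells out more of the arithmetic than the paper's own two-line proof does.
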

\begin{proof}
	Thanks to the estimates in Propositions \ref{L^r_High} and \ref{L^r_Low} with the estimate \eqref{estimate_middle}, we apply Young's convolution inequality to conclude the estimate for $v(t,\cdot)$ in $\dot{H}^s_q$. Involving the estimate for the time-derivative of solutions we know that
	\begin{align*}
	\partial_t \widehat{K}_0(t,|\xi|)= -(|\xi|^2+|\xi|^4)\widehat{K}_1(t,|\xi|)\ \  \mbox{and}\ \  \partial_t \widehat{K}_1(t,|\xi|)= \widehat{K}_0(t,|\xi|)- 2\epsilon |\xi|^2\widehat{K}_1(t,|\xi|).	
	\end{align*}
	Then, we apply again Young's convolution inequality associated with Propositions \ref{L^r_High} and \ref{L^r_Low} to arrive at the second statement. This completes our proof.
\end{proof}

\begin{remark}
    Although we may not completely compare Theorem \ref{Thm_L^m-L^q} in this paper with the dissipative-dispersive estimates for the linearized viscous Boussinesq equation in  Lemma 3.1 of \cite{Liu-Wang=2020} since different assumptions on initial data,  we can estimate solutions in $\dot{H}^{s}_q$ even for $1\leqslant q<2$ or $q=\infty$. Particularly, concerning the case $1\leqslant q<2$ we realize that the embedding $\dot{B}^0_{q,2}\hookrightarrow L^q$ does not hold anymore, where $\dot{B}^s_{q,r}$ stand for the homogeneous Besov spaces. For this reason, our result is non-trivial in the sense that the new  dissipative-dispersive estimates in Theorem 2.5 of \cite{Liu-Wang=2020} do not work in our case.
\end{remark}

\section{Inviscid limits in the $L^{\infty}$ framework}\label{Section_Inviscid_Limits}
    As we mentioned in the introduction, the inviscid Boussinesq equation \eqref{Equation_Inviscid_Linear} is formally the limit equation of the viscous Boussinesq equation \eqref{Equation_Main_Linear} as the vanishing viscosity, i.e. $\epsilon=0$. This section mainly contributes to the asymptotic analysis for the small viscosity and rigorous justification of inviscid limits. For later convenience, let us denote by $v^{\epsilon}=v^{\epsilon}(t,x)$, the solution to the viscous Boussinesq equation \eqref{Equation_Main_Linear} with $0<\epsilon\ll 1$ and initial data $v^{\epsilon}(0,x)=v_0^{\epsilon}(x)$, $v_t^{\epsilon}(0,x)=v_1^{\epsilon}(x)$.
\subsection{Formal derivations of profiles with the small viscosity}
    This subsection is devoted to the application of the multi-scale analysis and WKB analysis to get the equations for the asymptotic profiles with $0<\epsilon\ll 1$ formally. Basing on the motivation of the multi-scale method (see, for example, \cite{Holmes-1995}), the solution $v^{\epsilon}(t,x)$ to \eqref{Equation_Main_Linear} with $\epsilon>0$ has the following expansions:
\begin{align}\label{expansion}
	v^{\epsilon}(t,x)=\sum\limits_{j\geqslant0}\epsilon^{j}\left(v^{I,j}(t,x)+v^{L,j}(t,\sqrt{\epsilon}x)\right),
\end{align}
    where each term in \eqref{expansion} is assumed to be smooth. Moreover, by taking $z=\sqrt{\epsilon}x$, we assume that the profiles of the remaining terms $v^{L,j}(t,z)$ are decaying (or zero) as $z\to 0$ for any $j\geqslant 0$. When $t=0$, the expansion \eqref{expansion} should satisfy the initial conditions. From the equation of viscous Boussinesq equation with the ansatz \eqref{expansion}, one notices that
    \begin{align}\label{Eq_sum}
        0&=\sum\limits_{j\geqslant 0}\epsilon^j(v^{I,j}_{tt}+v^{L,j}_{tt})-\sum\limits_{j\geqslant0} \epsilon^j(\Delta v^{I,j}+\epsilon\Delta_zv^{L,j})+\sum\limits_{j\geqslant0}\epsilon^j(\Delta^2 v^{I,j}+\epsilon^2\Delta_z^2 v^{L,j})\notag\\
    	&\quad-2\sum\limits_{j\geqslant0}\epsilon^{j+1}(\Delta v_t^{I,j}+\epsilon\Delta_zv_t^{L,j})
    \end{align}
    with the operator $\Delta_z:=\sum_{k=1}^n\partial_{z_k}^2$, where $v^{I,j}=v^{I,j}(t,x)$ and $v^{L,j}=v^{L,j}(t,z)$.
    \medskip

    \noindent \underline{Matching condition for initial data}: Let us combine initial conditions in \eqref{Equation_Main_Linear} with \eqref{expansion}, namely,
    \begin{align*}
    	v^{\epsilon}_0(x)&=v^{I,0}(0,x)+v^{L,0}(0,z)+\sum\limits_{j\geqslant1}\epsilon^{j}\left(v^{I,j}(0,x)+v^{L,j}(0,z)\right),\\
    	v^{\epsilon}_1(x)&=v_t^{I,0}(0,x)+v_t^{L,0}(0,z)+\sum\limits_{j\geqslant1}\epsilon^{j}\left(v^{I,j}_t(0,x)+v^{L,j}_t(0,z)\right).
    \end{align*}
Consequently, we may claim that $v^{I,0}(0,x)=v^{\epsilon}_0(x)$ and $v_t^{I,0}(0,x)=v^{\epsilon}_1(x)$ if $v^{L,0}(0,z)=0= v_t^{L,0}(0,z)$; $v^{I,j}(0,x)=-v^{L,j}(0,z)$ and $v^{I,j}_t(0,x)=-v^{L,j}_t(0,z)$ for $j\geqslant 1$.\medskip

\noindent \underline{First-order profile of solutions}: To get the outer solution, we collect all terms with $\ml{O}(\epsilon^0)$ in \eqref{Eq_sum} by
\begin{align*}
	v_{tt}^{I,0}-\Delta v^{I,0}+\Delta^2 v^{I,0}=-v_{tt}^{L,0}.
\end{align*}
By letting $\epsilon\downarrow 0$ (it leads to $z\to 0$ for any fixed $x$), from the property of remaining profiles as well as the matching condition of initial data, we find that $v^{I,0}(t,x)=v^{0}(t,x)$, namely, the function $v^{I,0}(t,x)$ is the solution to the inviscid Boussinesq equation \eqref{Equation_Inviscid_Linear}. At this time, one may have $v_{tt}^{L,0}(t,z)=0$ carrying $v^{L,0}(0,z)=0= v_t^{L,0}(0,z)$. So, we arrive at $v^{L,0}(t,z)=0$ for $t>0$ and $z\in\mb{R}^n$.\medskip

\noindent \underline{Second-order profile of solutions}: We now take the terms with the size $\epsilon$ in \eqref{Eq_sum} as follows:
\begin{align*}
	v_{tt}^{I,1}-\Delta v^{I,1}+\Delta^2 v^{I,1}=2\Delta v_t^{I,0}+\Delta_z v^{L,0}-v_{tt}^{L,1}.
\end{align*}
Again taking $\epsilon\downarrow0$, we derive an inhomogeneous inviscid Boussinesq equation
\begin{align}\label{Eq_Inhomogeneous_Boussinesq}
	\begin{cases}
		v_{tt}^{I,1}-\Delta v^{I,1}+\Delta^2 v^{I,1}=2\Delta v_t^{I,0},&x\in\mb{R}^n,\ t>0,\\
		v^{I,1}(0,x)=v_0^{I,1}(x),\ \ v^{I,1}_t(0,x)=v_1^{I,1}(x),&x\in\mb{R}^n,
	\end{cases}
\end{align}
and
\begin{align}\label{Eq_01}
	\begin{cases}
		v_{tt}^{L,1}=\Delta_z v^{L,0},&z\in\mb{R}^n,\ t>0,\\
		v^{L,1}(0,z)=v^{L,1}_0(z),\ \ v^{L,1}_t(0,z)=v^{L,1}_1(z),&z\in\mb{R}^n.
	\end{cases}
\end{align}
Concerning $v^{L,0}(t,z)=0$, $v_0^{L,1}(z)=-v_0^{I,1}(x)$ and $v_1^{L,1}(z)=-v_1^{I,1}(x)$, we integrate the equation in \eqref{Eq_01} twice over $[0,t]$ to get
\begin{align*}
	v^{L,1}(t,\sqrt{\epsilon}x)=-v_0^{I,1}(x)-tv_1^{I,1}(x).
\end{align*}
Actually, the solution to the inviscid Boussinesq equation \eqref{Equation_Inviscid_Linear} can be expressed by
\begin{align}\label{Solution_Inviscid_Boussinesq}
	v^0(t,x)=\underbrace{\ml{F}_{\xi\to x}^{-1}\left(\cos\left(|\xi|\sqrt{|\xi|^2+1}\,t\right)\right)}_{=:E_0(t,x)}\ast_{(x)}v_0^0(x)+\underbrace{\ml{F}_{\xi\to x}^{-1}\left(\frac{\sin\left(|\xi|\sqrt{|\xi|^2+1}\,t\right)}{|\xi|\sqrt{|\xi|^2+1}}\right)}_{=:E_1(t,x)}\ast_{(x)}v_1^0(x).
\end{align}
It leads to the solution of \eqref{Eq_Inhomogeneous_Boussinesq} having the form
\begin{align*}
	v^{I,1}(t,x)=E_0(t,x)\ast_{(x)}v_0^{I,1}(x)+E_1(t,x)\ast_{(x)}v_{1}^{I,1}(x)+2\int_0^tE_1(t-\tau,x)\ast_{(x)}\Delta v_t^0(\tau,x)\mathrm{d}\tau,
\end{align*}
where the function $v^0(t,x)$ was given in \eqref{Solution_Inviscid_Boussinesq}.\medskip

\noindent \underline{Higher-order profiles of solutions}: As we observed in the last remaining profiles in \eqref{Eq_sum}, the lowest order is $\epsilon^2$. For this reason, we collect all terms with the size $\epsilon^{j+2}$ for any $j\geqslant0$, namely,
\begin{align}\label{Eq_Higher_Order}
\begin{cases}
	v_{tt}^{I,j+2}-\Delta v^{I,j+2}+\Delta^2 v^{I,j+2}=2\Delta v_t^{I,j+1},&x\in\mb{R}^n,\ t>0,\\
	v^{I,j+2}(0,x)=v_0^{I,j+2}(x),\ \ v_t^{I,j+2}(0,x)=v_1^{I,j+2}(x),&x\in\mb{R}^n,
\end{cases}
\end{align}
where the source term $2\Delta v_t^{I,j+1}$ is determined by the $j+1$ step with the size $\epsilon^{j+1}$, moreover,
\begin{align}\label{Eq_Higher_Order_Remain}
	\begin{cases}
		v_{tt}^{L,j+2}=\Delta_z v^{L,j+1}-\Delta_z^2 v^{L,j}+2\Delta_z v_t^{L,j},&z\in\mb{R}^n,\ t>0,\\
		v^{L,j+2}(0,z)=v^{L,j+2}_0(z),\ \ v^{L,j+2}_t(0,z)=v^{L,j+2}_1(z),&z\in\mb{R}^n,\\
		v^{L,j+2}_0(z)+v_0^{I,j+2}(x)=0,\ \ v_1^{L,j+2}(z)+v_1^{I,j+2}(x)=0,&x,z\in\mb{R}^n,
	\end{cases}
\end{align}
where the terms $v^{L,j}(t,z)$ and $v^{L,j+1}(t,z)$ are given by the $j$ (with size $\epsilon^j$) and $j+1$ (with size $\epsilon^{j+1}$) steps, respectively. It results that
\begin{align}\label{Rep_Higher_profile_remain}
	v^{L,j+2}(t,z)&=v_0^{L,j+2}(z)+tv_1^{L,j+2}(z)-2t\Delta_zv_0^{L,j}(z)+2\int_0^t\Delta_zv^{L,j}(\tau,z)\mathrm{d}\tau\notag\\
	&\quad+\int_0^t\int_0^{\tau}\left(\Delta_zv^{L,j+1}(\eta,z)-\Delta_z^2v^{L,j}(\eta,z)\right)\mathrm{d}\eta\mathrm{d}\tau,
\end{align}
where $v^{L,j+2}_0(z)=-v_0^{I,j+2}(x)$ as well as $v_1^{L,j+2}(z)=-v_1^{I,j+2}(x)$.\medskip

\noindent Summarizing the last statements, we can conclude the next result for formal expansions with some profiles to the viscous Boussinesq equation \eqref{Equation_Main_Linear} with small $\epsilon>0$.
\begin{prop}\label{Prop_Formal_Expan}
	The solution $v^{\epsilon}=v^{\epsilon}(t,x)$ to the Cauchy problem for the viscous Boussinesq equation \eqref{Equation_Main_Linear} with small $\epsilon>0$ formally has the following asymptotic expansions:
	\begin{align*}
		v^{\epsilon}(t,x)=v^0(t,x)+\epsilon \left(v^{I,1}(t,x)-v_0^{I,1}(x)-tv_1^{I,1}(x)\right)+\sum\limits_{j\geqslant0}\epsilon^{j+2}\left(v^{I,j+2}(t,x)+v^{L,j+2}(t,\sqrt{\epsilon}x)\right),
	\end{align*}
where $v^0=v^0(t,x)$ is the solution to the inviscid Boussinesq equation \eqref{Equation_Inviscid_Linear}; $v^{I,1}=v^{I,1}(t,x)$ is the solution to the inhomogeneous inviscid Boussinesq equation \eqref{Eq_Inhomogeneous_Boussinesq} with initial value $v_0^{I,1}(x)$, $v_1^{I,1}(x)$; $v^{I,j+2}=v^{I,j+2}(t,x)$ is the solution to the Cauchy problem \eqref{Eq_Higher_Order}; and the function $v^{L,j+2}(t,\sqrt{\epsilon}x)=v^{L,j+2}(t,z)$ can be represented by \eqref{Rep_Higher_profile_remain}.
\end{prop}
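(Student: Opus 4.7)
The plan is to verify the formal expansion by substituting the ansatz \eqref{expansion} directly into the viscous Boussinesq equation and grouping terms according to the power of $\epsilon$, treating the inner variable $x$ and the outer variable $z=\sqrt{\epsilon}x$ as independent scales in the multi-scale sense. The chain rule gives $\partial_{x_k}(v^{L,j}(t,\sqrt{\epsilon}x))=\sqrt{\epsilon}\,(\partial_{z_k}v^{L,j})(t,\sqrt{\epsilon}x)$, so that each spatial derivative acting on a remainder term contributes an extra factor $\sqrt{\epsilon}$, which is how the identity \eqref{Eq_sum} is obtained. This already reorganizes the problem into two decoupled families of equations, one in the $x$-variable (the outer expansion $v^{I,j}$) and one in the $z$-variable (the boundary-layer / remainder expansion $v^{L,j}$), connected only through the matching of initial data.

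First I would isolate the $\epsilon^0$ terms. Since $v^{I,0}$ depends only on $x$ while $v^{L,0}$ depends on $z$, and both must vanish separately in the equation after letting $\epsilon\downarrow0$ (which forces $z\to 0$ for any fixed $x$, so the remainder tends to zero by hypothesis), one obtains the inviscid Boussinesq equation for $v^{I,0}$ and $v^{L,0}_{tt}=0$. Combining $v^{L,0}_{tt}=0$ with the matching condition $v^{L,0}(0,z)=v^{L,0}_t(0,z)=0$ forces $v^{L,0}\equiv 0$, and then the matching condition on $v^{I,0}$ identifies it with the inviscid solution $v^0$ defined by \eqref{Equation_Inviscid_Linear}. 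Next I would collect the $\epsilon^1$ terms; because $v^{L,0}\equiv 0$, the inner equation for $v^{I,1}$ reduces to the inhomogeneous inviscid Boussinesq equation \eqref{Eq_Inhomogeneous_Boussinesq} driven by $2\Delta v^{I,0}_t=2\Delta v^0_t$, while the remainder equation becomes $v^{L,1}_{tt}=\Delta_z v^{L,0}=0$. Integrating the latter twice in $t$ against the matching initial data $v^{L,1}(0,z)=-v_0^{I,1}(x)$ and $v^{L,1}_t(0,z)=-v^{I,1}_1(x)$ yields $v^{L,1}(t,\sqrt{\epsilon}x)=-v_0^{I,1}(x)-tv_1^{I,1}(x)$, exactly the correction appearing in the statement.

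For the higher-order terms I would proceed inductively. At order $\epsilon^{j+2}$ the equation \eqref{Eq_sum} produces the inner equation \eqref{Eq_Higher_Order}, with forcing $2\Delta v^{I,j+1}_t$ coming from the $\epsilon\cdot\epsilon^{j+1}$ viscous contribution of the previous outer profile, and the remainder equation \eqref{Eq_Higher_Order_Remain}, whose source terms $\Delta_z v^{L,j+1}$, $\Delta_z^2 v^{L,j}$, $2\Delta_z v^{L,j}_t$ are already known from the $j+1$ and $j$ steps. One then writes the solution of the remainder ODE in $t$ by a double integration, using the matching data $v^{L,j+2}_0(z)=-v^{I,j+2}_0(x)$ and $v^{L,j+2}_1(z)=-v^{I,j+2}_1(x)$; this gives the explicit representation \eqref{Rep_Higher_profile_remain}. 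Finally, assembling all orders reproduces the expansion stated in Proposition \ref{Prop_Formal_Expan}.

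The main obstacle is conceptual rather than computational: one has to justify the splitting \eqref{Eq_sum} into an inner piece in $x$ and a boundary-layer piece in $z=\sqrt{\epsilon}x$ as if the two were independent, which is the standard but delicate step of the multi-scale WKB method; this is handled by invoking the decay assumption on each $v^{L,j}(t,z)$ as $z\to 0$ together with the fact that at each order the inner equation must hold for all $x$ while the remainder equation must hold for all $z$. Once this separation is accepted, the rest is a systematic bookkeeping of powers of $\epsilon$ and a repeated use of the matching conditions on initial data. Note that because this expansion is only formal, no quantitative error estimate is required here; the rigorous convergence $v^{\epsilon}\to v^0$ and the second-order refinement $v^{\epsilon}\to v^0+\epsilon\tilde{v}$ in $L^{\infty}$ are deferred to the subsequent subsections via energy estimates in the Fourier space.
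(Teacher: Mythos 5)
Your proposal is correct and follows essentially the same route as the paper: substituting the multi-scale ansatz, deriving the matching conditions for the data, collecting terms order by order in $\epsilon$ to obtain the outer equations for $v^{I,j}$ and the remainder equations for $v^{L,j}$, and then solving the remainder ODEs in $t$ by double integration. The paper's ``proof'' of Proposition \ref{Prop_Formal_Expan} is precisely this formal bookkeeping carried out in the preceding subsection, so no further comparison is needed.
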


Since the  complexity of Proposition \ref{Prop_Formal_Expan}, it seems challenging to demonstrate the expansion for all $j\geqslant 0$. Hence, we will rigorously justify the first-order profiles (classical inviscid limit argument) and the second-order profiles in the next two subsections.

\subsection{Rigorous justification of inviscid limits for the first-order profile}
\begin{theorem}\label{Thm_Convergence}
	Let $0<\epsilon\ll 1$. Let us assume $v^{\epsilon}_0(x)\equiv v^0_0(x)$ and $v^{\epsilon}_1(x)\equiv v^0_1(x)$ fulfilling $\langle D\rangle^{s_0}v_0^0\in L^1$ and $\langle D\rangle^{s_1}v_1^0\in L^1$ for $ s_0>n+2$, $s_1>n$ with $n\geqslant 1$. Then, the solution $v^{\epsilon}=v^{\epsilon}(t,x)$ to the viscous Boussinesq equation \eqref{Equation_Main_Linear} converges to the one $v^0(t,x)$ to the inviscid Boussinesq equation \eqref{Equation_Inviscid_Linear} in the following sense:
	\begin{align*}
		\sup\limits_{t\in[0,T],\ x\in\R^n}|v^{\epsilon}(t,x)-v^0(t,x)|\leqslant C_{T}\,\epsilon\left(\|\langle D\rangle^{s_0}v_0^0\|_{L^1}+\|\langle D\rangle^{s_1}v_1^0\|_{L^1}\right),
	\end{align*}
	where the constant $C_T$ is independent of $\epsilon$.
\end{theorem}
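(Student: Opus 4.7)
The plan is to work entirely in the Fourier space, express the difference $v^{\epsilon}-v^0$ as the action of $\widehat{K}_j-\widehat{K}_j^{0}$ (with $\widehat{K}_j^{0}$ denoting the inviscid $\epsilon=0$ analogues of \eqref{K_0-Formula}--\eqref{K_1-Formula}) on the common initial data, and close the argument via the Hausdorff--Young inequality $\|f\|_{L^{\infty}}\lesssim\|\hat{f}\|_{L^1}$. Since
\begin{align*}
v^{\epsilon}(t,x)-v^0(t,x)=\mathcal{F}^{-1}_{\xi\to x}\bigl((\widehat{K}_0-\widehat{K}_0^{0})\hat{v}^0_0\bigr)(t,x)+\mathcal{F}^{-1}_{\xi\to x}\bigl((\widehat{K}_1-\widehat{K}_1^{0})\hat{v}^0_1\bigr)(t,x),
\end{align*}
Hausdorff--Young combined with the elementary bound $|\hat{v}^0_j(\xi)|\leqslant\langle\xi\rangle^{-s_j}\|\langle D\rangle^{s_j}v^0_j\|_{L^1}$ reduces the whole task to uniform-in-$t\in[0,T]$ pointwise control of the two kernel differences, whose integrability against $\langle\xi\rangle^{-s_j}$ then yields the desired inviscid-limit rate.

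The main algebraic input is the identity
\begin{align*}
\omega^0(|\xi|)-\omega^{\epsilon}(|\xi|)=\frac{\epsilon^2|\xi|^3}{\sqrt{|\xi|^2+1}+\sqrt{(1-\epsilon^2)|\xi|^2+1}},
\end{align*}
where $\omega^{\epsilon}(|\xi|):=|\xi|\sqrt{(1-\epsilon^2)|\xi|^2+1}$, together with the Taylor-type bound $|\mathrm{e}^{-\epsilon|\xi|^2t}-1|\leqslant\epsilon|\xi|^2t$ and the asymptotic lower bounds $\omega^{\epsilon}\gtrsim|\xi|$ for $|\xi|\lesssim1$ and $\omega^{\epsilon}\gtrsim|\xi|^2$ for $|\xi|\gtrsim1$. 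I would split each kernel difference additively --- for $j=1$ as
\begin{align*}
(\mathrm{e}^{-\epsilon|\xi|^2t}-1)\frac{\sin(\omega^{\epsilon}t)}{\omega^{\epsilon}}+\frac{\sin(\omega^{\epsilon}t)-\sin(\omega^0t)}{\omega^{\epsilon}}+\sin(\omega^0t)\Bigl(\frac{1}{\omega^{\epsilon}}-\frac{1}{\omega^0}\Bigr),
\end{align*}
and analogously for $j=0$ with the third summand replaced by the extra $\epsilon$-linear piece present in \eqref{K_0-Formula} --- and handle each term in the two regimes $|\xi|\lesssim1$ and $|\xi|\gtrsim1$ separately. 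Combining the first-order Taylor bound, the mean value bound $|\sin a-\sin b|\leqslant|a-b|$, and the scaling of $\omega^0-\omega^{\epsilon}$, I expect to arrive at the uniform pointwise estimates
\begin{align*}
\bigl|\widehat{K}_1(t,|\xi|)-\widehat{K}_1^{0}(t,|\xi|)\bigr|\leqslant C_T\,\epsilon,\qquad \bigl|\widehat{K}_0(t,|\xi|)-\widehat{K}_0^{0}(t,|\xi|)\bigr|\leqslant C_T\,\epsilon\,\langle\xi\rangle^2
\end{align*}
for all $t\in[0,T]$, with $C_T$ independent of $\epsilon\in(0,1)$.

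With these bounds in hand, the closure is immediate: Hausdorff--Young yields
\begin{align*}
\sup_{x\in\mathbb{R}^n}|v^{\epsilon}(t,x)-v^0(t,x)|\lesssim\epsilon\,\|\langle D\rangle^{s_0}v^0_0\|_{L^1}\int_{\mathbb{R}^n}\langle\xi\rangle^{-s_0+2}\,\mathrm{d}\xi+\epsilon\,\|\langle D\rangle^{s_1}v^0_1\|_{L^1}\int_{\mathbb{R}^n}\langle\xi\rangle^{-s_1}\,\mathrm{d}\xi,
\end{align*}
both $\xi$-integrals being finite precisely under the assumptions $s_0>n+2$ and $s_1>n$, which also explains the sharpness of these thresholds. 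I anticipate that the main technical obstacle lies in the bookkeeping for the $\widehat{K}_0$-estimate: the damping factor $\mathrm{e}^{-\epsilon|\xi|^2t}$ is essentially inert for $|\xi|\lesssim\epsilon^{-1/2}$, so the linear $\epsilon$-rate for the piece $(\mathrm{e}^{-\epsilon|\xi|^2t}-1)\cos(\omega^{\epsilon}t)$ can only be extracted at the cost of the extra factor $\langle\xi\rangle^2$, which in turn forces the two extra derivatives appearing in the hypothesis on $v^0_0$. The frequency cut-offs and the Bessel-function machinery from Section~\ref{Section_Lm-Lq} are not really needed here because we work pointwise in $\xi$.
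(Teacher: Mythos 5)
Your proposal is correct and follows essentially the same route as the paper's proof: a pointwise Fourier-space comparison of the kernels via telescoping auxiliary terms, the square-root difference identity \eqref{Fact}, the Taylor bound on $\mathrm{e}^{-\epsilon|\xi|^2t}-1$, and closure by Hausdorff--Young with the frequency split that produces the thresholds $s_0>n+2$, $s_1>n$. The only (immaterial) difference is that you work with the real trigonometric form \eqref{K_0-Formula}--\eqref{K_1-Formula} and absorb the $t$-dependence into $C_T$ at once, whereas the paper splits the kernels into their $\lambda_{\pm}$ exponential halves and tracks the powers of $|\xi|$ and $t$ explicitly before integrating.
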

\begin{remark} In Theorem \ref{Thm_Convergence}, the convergence in any finite time such that
	\begin{align*}
		v^{\epsilon}\to v^0 \ \ \mbox{in}\ \ L^{\infty}([0,T]\times\mb{R}^n)\ \ \mbox{as}\ \ \epsilon\downarrow0
	\end{align*}
has been shown. Namely, the inviscid limit holds with the rate of convergence $\epsilon$. Furthermore, it also implies the correction of Proposition \ref{Prop_Formal_Expan} for the first-order since $\sup|v^{\epsilon}(t,x)-v^0(t,x)|=\ml{O}(\epsilon)$.
\end{remark}
\begin{proof}
	According to the Fourier images of the solutions to \eqref{Equation_Main_Linear} and \eqref{Equation_Inviscid_Linear}, let us take directly the difference between them
	\begin{align*}
			|\hat{v}^{\epsilon}(t,\xi)-\hat{v}^0(t,\xi)|\leqslant C\sum\limits_{\pm}\left(I_0^{\pm}(t,|\xi|)|\hat{v}_0^0(\xi)|+I_1^{\pm}(t,|\xi|)|\hat{v}_1^0(\xi)|\right),
	\end{align*}
where we defined
\begin{align*}
	I_0^{\pm}(t,|\xi|)&:=\left|\frac{\left(-\epsilon|\xi|^2\pm i|\xi|\sqrt{(1-\epsilon^2)|\xi|^2+1}\,
		\right)\mathrm{e}^{-\epsilon|\xi|^2t\mp i|\xi|\sqrt{(1-\epsilon^2)|\xi|^2+1}t}}{2i|\xi|\sqrt{(1-\epsilon^2)|\xi|^2+1}}-\frac{\left(\pm i|\xi|\sqrt{|\xi|^2+1}\,
		\right)\mathrm{e}^{\mp i|\xi|\sqrt{|\xi|^2+1}t}}{2i|\xi|\sqrt{|\xi|^2+1}}\right|,\\
	I_1^{\pm}(t,|\xi|)&:=\left|\frac{\mathrm{e}^{-\epsilon|\xi|^2t\pm i|\xi|\sqrt{(1-\epsilon^2)|\xi|^2+1}t}}{2i|\xi|\sqrt{(1-\epsilon^2)|\xi|^2+1}}-\frac{\mathrm{e}^{\pm i|\xi|\sqrt{|\xi|^2+1}t}}{2i|\xi|\sqrt{|\xi|^2+1}}\right|.
\end{align*}
	One may notice a crucial fact
	\begin{align}\label{Fact}
		\sqrt{(1-\epsilon^2)|\xi|^2+1}-\sqrt{|\xi|^2+1}=\frac{-\epsilon^2|\xi|^2}{\sqrt{(1-\epsilon^2)|\xi|^2+1}+\sqrt{|\xi|^2+1}}\sim\frac{-\epsilon^2|\xi|^2}{2\sqrt{|\xi|^2+1}}.
	\end{align}
	For one thing, constructing an auxiliary term implies
	\begin{align*}
		I_1^{\pm}(t,|\xi|)&\leqslant \left|\frac{\mathrm{e}^{-\epsilon|\xi|^2t\pm i|\xi|\sqrt{(1-\epsilon^2)|\xi|^2+1}t}}{2i|\xi|\sqrt{(1-\epsilon^2)|\xi|^2+1}}-\frac{\mathrm{e}^{\pm i|\xi|\sqrt{|\xi|^2+1}t}}{2i|\xi|\sqrt{(1-\epsilon^2)|\xi|^2+1}}\right|\\
		&\quad+\left|\frac{\mathrm{e}^{\pm i|\xi|\sqrt{|\xi|^2+1}t}}{2i|\xi|\sqrt{(1-\epsilon^2)|\xi|^2+1}}-\frac{\mathrm{e}^{\pm i|\xi|\sqrt{|\xi|^2+1}t}}{2i|\xi|\sqrt{|\xi|^2+1}}\right|\\
		&\leqslant\left|\frac{\mathrm{e}^{\pm i|\xi|\sqrt{|\xi|^2+1}t}}{2i|\xi|\sqrt{(1-\epsilon^2)|\xi|^2+1}}\left(\mathrm{e}^{-\epsilon|\xi|^2t\pm i|\xi|\left(\sqrt{(1-\epsilon^2)|\xi|^2+1}-\sqrt{|\xi|^2+1}\right)t}-1\right)\right|\\
		&\quad+\left|\frac{\mathrm{e}^{\pm i|\xi|\sqrt{|\xi|^2+1}t}}{2i|\xi|} \cdot \frac{\sqrt{|\xi|^2+1}-\sqrt{(1-\epsilon^2)|\xi|^2+1}}{\sqrt{(1-\epsilon^2)|\xi|^2+1}\sqrt{|\xi|^2+1}}\right|.
	\end{align*}
	According to the equality
	\begin{align*}
		\mathrm{e}^{-\epsilon|\xi|^2t\pm i|\xi|\left(\sqrt{(1-\epsilon^2)|\xi|^2+1}-\sqrt{|\xi|^2+1}\right)t}-1&=\left(-\epsilon|\xi|^2t\pm i|\xi|\left(\sqrt{(1-\epsilon^2)|\xi|^2+1}-\sqrt{|\xi|^2+1}\right)t\right)\\
		&\ \quad\times\int_0^1\mathrm{e}^{-\epsilon|\xi|^2t\eta\pm i|\xi|\left(\sqrt{(1-\epsilon^2)|\xi|^2+1}-\sqrt{|\xi|^2+1}\right)t\eta}\mathrm{d}\eta,
	\end{align*}
	and the fact \eqref{Fact}, we arrive at
	\begin{align*}
		I_1^{\pm}(t,|\xi|)&\leqslant C\left(\epsilon|\xi|\langle\xi\rangle^{-1}t+\epsilon^2|\xi|^2\langle\xi\rangle^{-2}t+\epsilon^2|\xi|\langle\xi\rangle^{-3}\right).
	\end{align*}
	By using the same approach as the above, we introduce two auxiliary terms to treat the difference $I_0^{\pm}(t,|\xi|)$. Precisely, we know
	\begin{align*}
		&\left|\frac{\left(-\epsilon|\xi|^2\pm i|\xi|\sqrt{(1-\epsilon^2)|\xi|^2+1}\,
			\right)\mathrm{e}^{-\epsilon|\xi|^2t\mp i|\xi|\sqrt{(1-\epsilon^2)|\xi|^2+1}t}}{2i|\xi|\sqrt{(1-\epsilon^2)|\xi|^2+1}}-\frac{\pm \sqrt{|\xi|^2+1}
			\mathrm{e}^{-\epsilon|\xi|^2t\mp i|\xi|\sqrt{(1-\epsilon^2)|\xi|^2+1}t}}{2\sqrt{(1-\epsilon^2)|\xi|^2+1}}\right|\\
		&\qquad\leqslant\frac{C\mathrm{e}^{-\epsilon|\xi|^2t}}{|\xi|\sqrt{(1-\epsilon^2)|\xi|^2+1}}\left(\epsilon|\xi|^2+|\xi| \left|\sqrt{(1-\epsilon^2)|\xi|^2+1}-\sqrt{|\xi|^2+1}\,
		\right|\right)\\
		&\qquad\leqslant C\epsilon|\xi|\langle\xi\rangle^{-1}\left(1+\epsilon|\xi|\langle\xi\rangle^{-1}\right),
	\end{align*}
	and
	\begin{align*}
	&\frac{1}{2}\left|\frac{\pm \sqrt{|\xi|^2+1}
		\mathrm{e}^{-\epsilon|\xi|^2t\mp i|\xi|\sqrt{(1-\epsilon^2)|\xi|^2+1}t}}{\sqrt{(1-\epsilon^2)|\xi|^2+1}}\mp\mathrm{e}^{-\epsilon|\xi|^2t\mp i|\xi|\sqrt{(1-\epsilon^2)|\xi|^2+1}t}\right|\\
	&\qquad+\frac{1}{2}\left|\pm
		\mathrm{e}^{-\epsilon|\xi|^2t\mp i|\xi|\sqrt{(1-\epsilon^2)|\xi|^2+1}t}\mp\mathrm{e}^{\mp i|\xi|\sqrt{|\xi|^2+1}t}\right|\leqslant C\epsilon^2|\xi|^2\langle\xi\rangle^{-2}+C\epsilon|\xi|^2t\left(1+|\xi|\langle\xi\rangle^{-1}\right).
	\end{align*}
	One can derive
	\begin{align*}
		I_0^{\pm}(t,|\xi|)\leqslant C\left(\epsilon|\xi|\langle\xi\rangle^{-1}+\epsilon^2|\xi|^2\langle\xi\rangle^{-2}+\epsilon|\xi|^2t+\epsilon^2|\xi|^3\langle\xi\rangle^{-1}t\right).
	\end{align*}
	Thus, by using the Hausdorff-Young inequality, we separate the phase into two parts as follows:
	\begin{align*}
		&\|v^{\epsilon}(t,\cdot)-v^0(t,\cdot)\|_{L^{\infty}}\leqslant C\|\hat{v}^{\epsilon}(t,\xi)-\hat{v}^0(t,\xi)\|_{L^1}\\
		&\quad\leqslant C\sum\limits_{\pm}\left(\left\|\langle\xi\rangle^{-s_0}I_0^{\pm}(t,|\xi|)\right\|_{L^1}\|\langle D\rangle^{s_0}v_0\|_{L^1}+ \left\|\langle\xi\rangle^{-s_1}I_1^{\pm}(t,|\xi|)\right\|_{L^1}\|\langle D\rangle^{s_1}v_1\|_{L^1}\right)\\
		&\quad\leqslant C\epsilon\left(\int_{|\xi|\leqslant 1}|\xi|\left(1+\epsilon|\xi|+|\xi|t+\epsilon|\xi|^2t\right)\mathrm{d}\xi+\int_{|\xi|\geqslant 1}\left(1+\epsilon+\langle\xi\rangle^2t+\epsilon\langle\xi\rangle^2t\right)\langle\xi\rangle^{-s_0}\mathrm{d}\xi\right)\|\langle D\rangle^{s_0}v_0^0\|_{L^1}\\
		&\quad\quad+C\epsilon\left(\int_{|\xi|\leqslant 1}|\xi|\left(t+\epsilon|\xi|t+\epsilon\right)\mathrm{d}\xi+\int_{|\xi|\geqslant 1}\left(t+\epsilon t+\epsilon\langle\xi\rangle^{-2}\right)\langle\xi\rangle^{-s_1}\mathrm{d}\xi\right)\|\langle D\rangle^{s_1}v_1^0\|_{L^1}\\
		&\quad\leqslant C\epsilon(1+t)\left(\|\langle D\rangle^{s_0}v_0^0\|_{L^1}+\|\langle D\rangle^{s_1}v_1^0\|_{L^1}\right)
	\end{align*}
	for $n\geqslant 1$ with $s_0>n+2$ and $s_1>n$. In conclusion, our proof is complete.
\end{proof} 
\subsection{Rigorous justification of inviscid limits for the second-order profile}\label{Sub_Sec_Second-Order}
\begin{theorem}\label{Thm_Second_Oreder_Expan}
		Let $0<\epsilon\ll 1$. Let us assume $v^{\epsilon}_0(x)\equiv v^0_0(x)$ and $v^{\epsilon}_1(x)\equiv v^0_1(x)$ fulfilling $\langle D\rangle^{s_0}v_0^0\in L^1$ and $\langle D\rangle^{s_1}v_1^0\in L^1$ for $ s_0>n+4$, $s_1>n+2$ with $n\geqslant 1$. Then, the solution $v^{\epsilon}=v^{\epsilon}(t,x)$ to the viscous Boussinesq equation \eqref{Equation_Main_Linear} converges to the one $v^0(t,x)$ to the inviscid Boussinesq equation \eqref{Equation_Inviscid_Linear} with a higher-order profile in the following way:
	\begin{align*}
		\sup\limits_{t\in[0,T],\ x\in\R^n}\left|v^{\epsilon}(t,x)-v^0(t,x)-\epsilon\tilde{v}(t,x)\right|\leqslant C_{T}\,\epsilon^2\left(\|\langle D\rangle^{s_0}v_0^0\|_{L^1}+\|\langle D\rangle^{s_1}v_1^0\|_{L^1}\right),
	\end{align*}
	where the constant $C_T$ is independent of $\epsilon$, and
	\begin{align*}
\tilde{v}(t,x):=2\int_0^tE_1(t-\tau,x)\ast_{(x)}\Delta v_t^0(\tau,x)\mathrm{d}\tau.
	\end{align*} Here, $E_1=E_1(t,x)$ is the fundamental solution to the inviscid Boussinesq equation \eqref{Equation_Inviscid_Linear} with initial data $v_0^0(x)=0$ as well as $v_1^0(x)=\delta_0$.
\end{theorem}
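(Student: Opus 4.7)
The plan is to work entirely in the Fourier space and push the argument of Theorem \ref{Thm_Convergence} to one higher order in $\epsilon$. First, the Fourier image of $\tilde v$ reads
\begin{align*}
	\widehat{\tilde v}(t,\xi)=-2|\xi|^2\int_0^t \widehat{E}_1(t-\tau,|\xi|)\,\hat v^0_t(\tau,\xi)\,\mathrm{d}\tau,
\end{align*}
where by \eqref{Solution_Inviscid_Boussinesq} one has $\widehat{E}_0(t,|\xi|)=\cos(\omega t)$ and $\widehat{E}_1(t,|\xi|)=\sin(\omega t)/\omega$ with $\omega:=|\xi|\sqrt{|\xi|^2+1}$. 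Using $\widehat{E}_{1,t}=\widehat{E}_0$, $\widehat{E}_{0,t}=-\omega^2\widehat{E}_1$ together with product-to-sum trigonometric identities, one derives the closed forms
\begin{align*}
	-2|\xi|^2\!\int_0^t\!\widehat{E}_1(t-\tau)\widehat{E}_0(\tau)\,\mathrm{d}\tau=-|\xi|^2 t\,\widehat{E}_1,\qquad -2|\xi|^2\!\int_0^t\!\widehat{E}_1(t-\tau)\widehat{E}_{0,t}(\tau)\,\mathrm{d}\tau=|\xi|^2\widehat{E}_1-|\xi|^2 t\,\widehat{E}_0,
\end{align*}
which will serve as the explicit candidates for the $\epsilon$-linear corrections of $\widehat{K}_1^\epsilon$ and $\widehat{K}_0^\epsilon$ respectively.

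Next, I would Taylor expand the kernels $\widehat{K}_0^\epsilon$, $\widehat{K}_1^\epsilon$ in \eqref{K_0-Formula}, \eqref{K_1-Formula} to second order in $\epsilon$ and check that their $O(\epsilon)$ pieces reproduce exactly the two closed-form expressions above, so that $\sum_{j=0,1}\mathcal{D}_j(t,|\xi|)\hat v_j^0(\xi)=\widehat{\tilde v}(t,\xi)$. The mechanism is to factorize
\begin{align*}
	\mathrm{e}^{-\epsilon|\xi|^2 t\pm it|\xi|\sqrt{(1-\epsilon^2)|\xi|^2+1}}=\mathrm{e}^{\pm i\omega t}\cdot \mathrm{e}^{\Phi^\pm(\epsilon,t,\xi)},\quad \Phi^\pm=-\epsilon|\xi|^2 t \mp \frac{it|\xi|\cdot\epsilon^2|\xi|^2}{\sqrt{(1-\epsilon^2)|\xi|^2+1}+\sqrt{|\xi|^2+1}},
\end{align*}
by the identity \eqref{Fact}, and then apply the elementary second-order Taylor inequality $|\mathrm{e}^z-1-z|\leqslant \frac{|z|^2}{2}\mathrm{e}^{\max\{\mathrm{Re}\,z,0\}}$ jointly with the expansions of $1/\sqrt{(1-\epsilon^2)|\xi|^2+1}$ and $\epsilon|\xi|/\sqrt{(1-\epsilon^2)|\xi|^2+1}$ to order $\epsilon^2$. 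After grouping the $O(1)$ and $O(\epsilon)$ pieces with $\hat v^0+\epsilon\widehat{\tilde v}$, one obtains a pointwise bound of the form
\begin{align*}
	\bigl|\hat{v}^\epsilon(t,\xi)-\hat{v}^0(t,\xi)-\epsilon\widehat{\tilde v}(t,\xi)\bigr|\leqslant C\epsilon^2\bigl[R_0(t,|\xi|)|\hat v_0^0(\xi)|+R_1(t,|\xi|)|\hat v_1^0(\xi)|\bigr],
\end{align*}
in which $R_0,R_1$ are polynomial weights in $(|\xi|,t)$ with worst high-frequency growth $\langle\xi\rangle^4(1+t)^2$ and $\langle\xi\rangle^2(1+t)^2$ respectively, together with integrable $|\xi|$-powers for $|\xi|\leqslant 1$.

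To conclude, apply the Hausdorff--Young inequality $\|w\|_{L^\infty}\leqslant C\|\hat w\|_{L^1}$ to $w:=v^\epsilon-v^0-\epsilon\tilde v$, insert weights $\langle\xi\rangle^{\pm s_j}$, split the $\xi$-integral into $\{|\xi|\leqslant 1\}$ and $\{|\xi|\geqslant 1\}$, and estimate $\|\hat v_j^0\|_{L^\infty}\leqslant \|v_j^0\|_{L^1}$ along the lines of the concluding argument of Theorem \ref{Thm_Convergence}. The hypotheses $s_0>n+4$ and $s_1>n+2$ are precisely what is needed to render the weighted integrals of $R_0,R_1$ finite, yielding the claimed $C_T\,\epsilon^2$ bound with $C_T$ independent of $\epsilon$. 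The principal obstacle is the middle bookkeeping step: both the real dissipative exponential $\mathrm{e}^{-\epsilon|\xi|^2 t}$ and the complex dispersive exponential contribute $O(\epsilon)$-pieces, and these must be combined with the factor $\epsilon|\xi|\sin(\cdot)/\sqrt{\cdot}$ appearing in \eqref{K_0-Formula} so that the total $O(\epsilon)$ correction equals $\epsilon\widehat{\tilde v}$ \emph{exactly}; the closed-form trigonometric computation above is what guarantees this exact match, so that the residual is genuinely $O(\epsilon^2)$ with controllable polynomial growth.
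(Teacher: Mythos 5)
Your proposal is correct, and your closed-form identities check out: the first-order Taylor coefficients in $\epsilon$ of $\widehat{K}_1$ and $\widehat{K}_0$ are indeed $-|\xi|^2 t\,\widehat{E}_1$ and $|\xi|^2\widehat{E}_1-|\xi|^2 t\,\widehat{E}_0$ (the $\epsilon^2$-flatness of $\sqrt{(1-\epsilon^2)|\xi|^2+1}$ from \eqref{Fact} is exactly what makes the dispersive phase drop out at first order), and these coincide with the Duhamel representation of $\widehat{\tilde v}$. However, your route is genuinely different from the paper's. The paper never computes $\widehat{\tilde v}$ in closed form: it sets $\widehat{R}^{\epsilon}=\hat v^{\epsilon}-\hat v^{0}-\epsilon\hat v^{I,1}$, observes that $\widehat{R}^{\epsilon}$ solves the forced oscillator equation $\widehat{R}^{\epsilon}_{tt}+(|\xi|^2+|\xi|^4)\widehat{R}^{\epsilon}=-2\epsilon|\xi|^2(\hat v_t^{\epsilon}-\hat v_t^{0})$ with zero data, bounds the forcing by $O(\epsilon^2)$ using only the \emph{first-order} kernel-difference estimates $\mathcal{E}_0,\mathcal{E}_1$, and closes with a Fourier-space energy estimate plus Gr\"onwall before applying Hausdorff--Young. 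Your approach requires \emph{second-order} Taylor control of the kernels but rewards you with a transparent, exact order-$\epsilon$ cancellation and a constant $C_T$ that grows only polynomially in $T$ (roughly $(1+T)^2$), whereas the paper's Gr\"onwall step produces $\int_0^t \mathrm{e}^{t-\tau}(1+\tau^2)\,\mathrm{d}\tau$ and hence an exponentially growing $C_T$. Conversely, the paper's energy argument is more structural: it needs no explicit formula for the corrector and would extend more readily to the higher-order profiles of Proposition \ref{Prop_Formal_Expan}, where closed trigonometric forms are no longer available. Both routes arrive at the same regularity thresholds $s_0>n+4$, $s_1>n+2$, since in either case the worst high-frequency weights are $\langle\xi\rangle^{4}$ against $\hat v_0^0$ and $\langle\xi\rangle^{2}$ against $\hat v_1^0$.
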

\begin{remark} In Theorem \ref{Thm_Second_Oreder_Expan}, the convergence in any finite time such that
	\begin{align*}
		v^{\epsilon}\to v^0+\epsilon\tilde{v} \ \ \mbox{in}\ \ L^{\infty}([0,T]\times\mb{R}^n)\ \ \mbox{as}\ \ \epsilon\downarrow0
	\end{align*}
	has been shown. Namely, the inviscid limit holds with the rate of convergence $\epsilon^2$. Furthermore, it also implies the correction of Proposition \ref{Prop_Formal_Expan} for the second-order if $v_j^{I,1}(x)\equiv0$ for $j=0,1$.
\end{remark}
\begin{remark}
	Comparing with the inviscid limit result in Theorem \ref{Thm_Convergence}, we found that in Theorem \ref{Thm_Second_Oreder_Expan}, by subtracting the additional error term $\epsilon\tilde{v}$, which is a solution to inhomogeneous inviscid Boussinesq equation carrying the source term $2\Delta v_t^0$, the rate of convergence has been improved by a factor $\epsilon$ in $L^{\infty}([0,T]\times\mb{R}^n)$.
\end{remark}
\begin{proof}
To begin with, let us introduce a new ansatz $\widehat{R}^{\epsilon}=\widehat{R}^{\epsilon}(t,\xi)$ such that
\begin{align*}
	\widehat{R}^{\epsilon}(t,\xi)=\hat{v}^{\epsilon}(t,\xi)-\hat{v}^0(t,\xi)-\epsilon\hat{v}^{I,1}(t,\xi),
\end{align*}
where $\hat{v}^{I,1}=\hat{v}^{I,1}(t,\xi)$ is the Fourier image of solution to the inhomogeneous Cauchy problem \eqref{Eq_Inhomogeneous_Boussinesq} with vanishing data, namely,
\begin{align*}
	\begin{cases}
		\hat{v}^{I,1}_{tt}+(|\xi|^2+|\xi|^4)\hat{v}^{I,1}=-2|\xi|^2\hat{v}^{0}_t,&\xi\in\mb{R}^n,\ t>0,\\
		\hat{v}^{I,1}(0,\xi)=0,\ \ \hat{v}^{I,1}_t(0,\xi)=0,&\xi\in\mb{R}^n.
	\end{cases}
\end{align*}
Therefore, the remainder $\widehat{R}^{\epsilon}$ satisfies
\begin{align*}
	\widehat{R}^{\epsilon}_{tt}+(|\xi|^2+|\xi|^4)\widehat{R}^{\epsilon}=-2\epsilon|\xi|^2(\hat{v}_t^{\epsilon}-\hat{v}_t^0).
\end{align*}
Multiplying the above equation by $2\overline{\widehat{R}^{\epsilon}_t}$, where $\overline{\widehat{R}^{\epsilon}}$ is the conjugate of $\widehat{R}^{\epsilon}$, and taking the real part of the resultant, we immediately derive
\begin{align}\label{Ineq_01}
	\frac{\mathrm{d}}{\mathrm{d}t}\left(|\widehat{R}^{\epsilon}_t|^2+(|\xi|^2+|\xi|^4)|\widehat{R}^{\epsilon}|^2\right)&=-4\epsilon|\xi|^2\Re\left((\hat{v}_t^{\epsilon}-\hat{v}_t^0)\overline{\widehat{R}^{\epsilon}_t}\,\right)\notag\\
	&\leqslant |\widehat{R}^{\epsilon}_t|^2+4\epsilon^2|\xi|^4|\hat{v}_t^{\epsilon}-\hat{v}_t^0|^2.
\end{align}
With the aim at estimating $|\hat{v}_t^{\epsilon}-\hat{v}_t^0|$, according to the representation of solutions and compatibility of initial data, we just need to control the time-derivative of kernels
\begin{align*}
	\ml{E}_j(t,|\xi|):=|\partial_t\widehat{K}_j(t,|\xi|)-\partial_t\widehat{E}_j(t,|\xi|)|\ \ \mbox{for} \ \ j=0,1.
\end{align*}
Concerning the second kernel, i.e. $j=1$ in the last line, we notice that
\begin{align*}
	\ml{E}_1(t,|\xi|)&\leqslant\epsilon|\xi|^2|\widehat{K}_1(t,|\xi|)|+\left|\mathrm{e}^{-\epsilon|\xi|^2 t}\cos\left(|\xi|\sqrt{(1-\epsilon^2)|\xi|^2+1}\,t\right)-\cos\left(|\xi|\sqrt{|\xi|^2+1}\,t\right)\right|\\
	&\leqslant C \epsilon|\xi|\langle\xi\rangle^{-1}+\mathrm{e}^{-\epsilon|\xi|^2t}\left|\cos\left(|\xi|\sqrt{(1-\epsilon^2)|\xi|^2+1}\,t\right)-\cos\left(|\xi|\sqrt{|\xi|^2+1}\,t\right) \right|\\
	&\quad+\left| \cos\left(|\xi|\sqrt{|\xi|^2+1}\,t\right)\right|\left(1-\mathrm{e}^{-\epsilon|\xi|^2t}\right).
\end{align*}
Due to the facts that
\begin{align}\label{Fact_2}
	&\left|\cos\left(|\xi|\sqrt{(1-\epsilon^2)|\xi|^2+1}\,t\right)-\cos\left(|\xi|\sqrt{|\xi|^2+1}\,t\right) \right|\notag\\
	&\qquad\leqslant C\left|\sin\left(\frac{\epsilon^2|\xi|^3t}{2\sqrt{(1-\epsilon^2)|\xi|^2+1}+2\sqrt{|\xi|^2+1}}\right)  \right|\leqslant C\epsilon^2|\xi|^3\langle\xi\rangle^{-1}t,
\end{align}
and
\begin{align}\label{Fact_1}
	1-\mathrm{e}^{-\epsilon|\xi|^2t}=\epsilon|\xi|^2t\int_0^1\mathrm{e}^{-\epsilon|\xi|^2t\tau}\mathrm{d}\tau\leqslant C\epsilon |\xi|^2t,
\end{align}
we may conclude
\begin{align*}
	\ml{E}_1(t,|\xi|)\leqslant C\epsilon|\xi|\left(\langle\xi\rangle^{-1}+\epsilon|\xi|^2\langle\xi\rangle^{-1}t+|\xi|t\right) \leqslant C\epsilon|\xi|\left(\langle\xi\rangle^{-1}+|\xi|t\right).
\end{align*}
Concerning another kernel we may estimate
\begin{align*}
	\ml{E}_0(t,|\xi|)&\leqslant \epsilon|\xi|^2|\widehat{K}_0(t,|\xi|)|+\epsilon|\xi|^2\mathrm{e}^{-\epsilon|\xi|^2t}\left|\cos\left(|\xi|\sqrt{(1-\epsilon^2)|\xi|^2+1}\,t\right)\right|\\
	&\quad+|\xi|\left|\mathrm{e}^{-\epsilon|\xi|^2t}\sqrt{(1-\epsilon^2)|\xi|^2+1}\sin\left(|\xi|\sqrt{(1-\epsilon^2)|\xi|^2+1}\,t\right)-\sqrt{|\xi|^2+1}\sin\left(|\xi|\sqrt{|\xi|^2+1}\,t\right)\right|\\
	&\leqslant C\epsilon|\xi|^2\left(1+\epsilon|\xi|\langle\xi\rangle^{-1}\right)+ |\xi|\left|\sqrt{(1-\epsilon^2)|\xi|^2+1}\sin\left(|\xi|\sqrt{(1-\epsilon^2)|\xi|^2+1}\,t\right)\left(\mathrm{e}^{-\epsilon|\xi|^2t}-1\right)\right|\\
	&\quad +|\xi|\left|\sin\left(|\xi|\sqrt{(1-\epsilon^2)|\xi|^2+1}\,t\right)\left(\sqrt{(1-\epsilon^2)|\xi|^2+1}-\sqrt{|\xi|^2+1}\right)\right|\\
	&\quad +|\xi|\langle\xi\rangle\left|\sin\left(|\xi|\sqrt{(1-\epsilon^2)|\xi|^2+1}\,t\right)-\sin\left(|\xi|\sqrt{|\xi|^2+1}\,t\right) \right|\\
	&\leqslant C\epsilon|\xi|^2+ \color{black} C|\xi|\langle\xi\rangle\left(1-\mathrm{e}^{-\epsilon|\xi|^2t}\right)+ |\xi|\left|\sqrt{(1-\epsilon^2)|\xi|^2+1}-\sqrt{|\xi|^2+1}\right|\\
	&\quad+ |\xi|\langle\xi\rangle\left|\sin\left(|\xi|\sqrt{(1-\epsilon^2)|\xi|^2+1}\,t\right)-\sin\left(|\xi|\sqrt{|\xi|^2+1}\,t\right) \right|.
\end{align*}
By using \eqref{Fact}, \eqref{Fact_1} and
\begin{align*}
	\left|\sin\left(|\xi|\sqrt{(1-\epsilon^2)|\xi|^2+1}\,t\right)-\sin\left(|\xi|\sqrt{|\xi|^2+1}\,t\right) \right|\leqslant C\left|\sin\left(\frac{\epsilon^2|\xi|^3t}{2\sqrt{(1-\epsilon^2)|\xi|^2+1}+2\sqrt{|\xi|^2+1}}\right)  \right| 
\end{align*}
associated with \eqref{Fact_2}, we arrive at
\begin{align*}
	\ml{E}_0(t,|\xi|)&\leqslant C\epsilon|\xi|^2\left(1+|\xi|\langle\xi\rangle t+\epsilon|\xi|\langle\xi\rangle^{-1}+\epsilon|\xi|^2t\right) \leqslant C\epsilon|\xi|^2\left(1+|\xi|\langle\xi\rangle t\right).
\end{align*}
In conclusion, it holds
\begin{align*}
	|\hat{v}^{\epsilon}_t(t,\xi)-\hat{v}^{0}_t(t,\xi)|^2&\leqslant C\ml{E}_0(t,|\xi|)^2|\hat{v}_0^0(\xi)|^2+C\ml{E}_1(t,|\xi|)^2|\hat{v}_1^0(\xi)|^2\\
	&\leqslant C\epsilon^2|\xi|^4\left(1+ |\xi|^2\langle\xi\rangle^{2}t^2\right)|\hat{v}_0^0(\xi)|^2+ C\epsilon^2|\xi|^2\left(\langle\xi\rangle^{-2}+|\xi|^2t^2\right)|\hat{v}_1^0(\xi)|^2\\
	&\leqslant C\epsilon^2(1+t^2)|\xi|^4\langle\xi\rangle^4|\hat{v}_0^0(\xi)|^2+C\epsilon^2(1+t^2)|\xi|^2\langle\xi\rangle^2|\hat{v}_1^0(\xi)|^2.
\end{align*}
Let us back to the frame \eqref{Ineq_01} so that
\begin{align*}
	&\frac{\mathrm{d}}{\mathrm{d}t}\left(|\widehat{R}^{\epsilon}_t|^2+(|\xi|^2+|\xi|^4)|\widehat{R}^{\epsilon}|^2\right)-\left(|\widehat{R}^{\epsilon}_t|^2+(|\xi|^2+|\xi|^4)|\widehat{R}^{\epsilon}|^2\right)\\
    &\qquad\leqslant C\epsilon^4(1+t^2)|\xi|^8\langle\xi\rangle^{4}|\hat{v}_0^0(\xi)|^2+C\epsilon^4(1+t^2)|\xi|^6\langle\xi\rangle^2|\hat{v}_1^0(\xi)|^2.
\end{align*}
Then, the employment of Gr\"onwall's inequality in the last line gives
\begin{align*}
	|\widehat{R}^{\epsilon}_t|^2+(|\xi|^2+|\xi|^4)|\widehat{R}^{\epsilon}|^2&\leqslant C\epsilon^4 \left(|\xi|^8\langle\xi\rangle^{4}|\hat{v}_0^0(\xi)|^2+ |\xi|^6\langle\xi\rangle^2|\hat{v}_1^0(\xi)|^2\right)\int_0^t\mathrm{e}^{t-\tau}(1+\tau^2)\mathrm{d}\tau\\
	&\leqslant C^*_T\,\epsilon^4(|\xi|^2+|\xi|^4)\left(|\xi|^6\langle\xi\rangle^{2}|\hat{v}_0^0(\xi)|^2+ |\xi|^4|\hat{v}_1^0(\xi)|^2\right),
\end{align*}
with $C_T^*>0$ independent of $\epsilon$, in which we considered the vanishing initial data for $\widehat{R}^{\epsilon}$. Namely,
\begin{align*}
|\widehat{R}^{\epsilon}(t,\xi)|\leqslant \sqrt{C^*_T}\,\epsilon^2\left(\langle\xi\rangle^{4}|\hat{v}_0^0(\xi)|+\langle\xi\rangle^2|\hat{v}_1^0(\xi)|\right).
\end{align*}
By using the Hausdorff-Young inequality similarly as those in the proof of Theorem \ref{Thm_Convergence}, we can derive
\begin{align*}
	\|R^{\epsilon}(t,\cdot)\|_{L^{\infty}}\leqslant C\|\widehat{R}^{\epsilon}(t,\xi)\|_{L^1}\leqslant C_T\,\epsilon^2\left(\|\langle D\rangle^{s_0}v_0^0\|_{L^1}+\|\langle D\rangle^{s_1}v_1^0\|_{L^1}\right)
\end{align*}
for $n\geqslant 1$ with $s_0> n+4$ and $s_1>n+2$. Finally, recalling
\begin{align*}
	R^{\epsilon}(t,x)&=v^{\epsilon}(t,x)-v^0(t,x)-\epsilon v^{I,1}(t,x)\\
	&=v^{\epsilon}(t,x)-v^0(t,x)-2\epsilon\int_0^tE_1(t-\tau,x)\ast_{(x)}\Delta v_t^0(\tau,x)\mathrm{d}\tau
\end{align*}
we have just completed the proof.
\end{proof}

\section{Global (in time) well-posedness for the nonlinear problem}\label{Section_GESDS}
Our main result for global (in time) existence of small data Sobolev solutions reads as follows.

\begin{theorem} \label{Global_Theorem}
	Let $q \in (1,\ity)$ be a fixed constant, $m\in [1,q)$ and $s >n/q$. We assume that $p\geqslant q/m$ for all $n> 2m(1+\kappa)$, where $\kappa:= \frac{1}{2}(\frac{1}{m}-\frac{1}{q})[\frac{n}{2}-1]$. Moreover, the exponent $p$ fulfills the condition
	\begin{align}\label{Exponent_Condition}
		p> 1+\max\left\{\f{n(1-\frac{m}{q})+ms}{n-2m(1+\kappa)},s\right\}.
	\end{align}
	Then, there exists a sufficiently small constant $\e_0>0$ such that for any initial data
	$$ (u_0,u_1) \in \mathcal{D}:= \left(L^m \cap H^{s}_q\right) \times \left(L^m \cap H^{\max\{s-2,0\}}_q\right) $$
	satisfying $\|(u_0,u_1)\|_{\mathcal{D}}\leqslant \e_0,$ 
	the nonlinear viscous Boussinesq equation \eqref{Equation_Main} admits a unique global (in time) Sobolev solution in the class
	$$ u \in \ml{C}\left([0,\ity),H^s_q\right).$$ 
	Furthermore, the following estimates hold:
	\begin{align*}
		\|u(t,\cdot)\|_{L^q}&\lesssim (1+t)^{1-\frac{n}{2}(\frac{1}{m}-\frac{1}{q})+\kappa} \|(u_0,u_1)\|_{\mathcal{D}},\\
		\| u(t,\cdot)\|_{\dot{H}^s_q}&\lesssim (1+t)^{1-\frac{n}{2}(\frac{1}{m}-\frac{1}{q})+\kappa-\frac{s}{2}} \|(u_0,u_1)\|_{\mathcal{D}}.
	\end{align*}
\end{theorem}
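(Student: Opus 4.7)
By Duhamel's principle, the Cauchy problem \eqref{Equation_Main} is equivalent to the integral equation
\begin{align*}
u(t,x) = K_0(t,|D|)\ast_{(x)} u_0(x) + K_1(t,|D|)\ast_{(x)} u_1(x) + \int_0^t K_1(t-\tau,|D|)\ast_{(x)} \Delta f(u(\tau,\cdot);p)\,\mathrm{d}\tau =: Nu(t,x),
\end{align*}
so the plan is to apply Banach's fixed point theorem to $N$ on a closed ball of the time-weighted Banach space $X(T):=\mathcal{C}([0,T],H^s_q)$ equipped with
\begin{align*}
\|u\|_{X(T)} := \sup_{t\in[0,T]}\left((1+t)^{-\alpha_0}\|u(t,\cdot)\|_{L^q} + (1+t)^{-\alpha_s}\|u(t,\cdot)\|_{\dot{H}^s_q}\right),
\end{align*}
with $\alpha_0:=1-\tfrac{n}{2}(\tfrac{1}{m}-\tfrac{1}{q})+\kappa$ and $\alpha_s:=\alpha_0-s/2$ chosen exactly as the decay exponents produced by Theorem \ref{Thm_L^m-L^q}. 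The objective reduces to proving the uniform-in-$T$ bounds $\|Nu\|_{X(T)}\lesssim \|(u_0,u_1)\|_{\mathcal{D}}+\|u\|_{X(T)}^p$ and $\|Nu-Nv\|_{X(T)}\lesssim(\|u\|_{X(T)}^{p-1}+\|v\|_{X(T)}^{p-1})\|u-v\|_{X(T)}$, since the smallness of $\|(u_0,u_1)\|_{\mathcal{D}}$ will then make $N$ a contraction on a small ball and the claimed estimates for $u$ follow at once from $\|u\|_{X(\infty)}\lesssim\|(u_0,u_1)\|_{\mathcal{D}}$.

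\textbf{Linear and nonlinear ingredients.} The linear bound $\|K_0\ast u_0+K_1\ast u_1\|_{X(T)}\lesssim\|(u_0,u_1)\|_{\mathcal{D}}$ is a direct consequence of Theorem \ref{Thm_L^m-L^q}, with the $L^q$-component estimated via the $L^m$-norms of the data and the $\dot{H}^s_q$-component via the $H^s_q\cap L^m$ and $H^{\max\{s-2,0\}}_q\cap L^m$ norms (the $\dot{H}^{s-2}_q$ piece of the $u_1$-contribution is absorbed in the kernel through the two derivatives carried by $\Delta$ on the data side). For the Duhamel integral, I split $[0,t]=[0,t/2]\cup[t/2,t]$: on the first sub-interval I apply Young's convolution inequality together with the $L^m{-}L^q$ (or $L^m{-}\dot{H}^s_q$) bounds from Theorem \ref{Thm_L^m-L^q} for the kernel $|D|^2 K_1$, while on the second sub-interval I use the corresponding $L^q{-}L^q$ bounds since $(t-\tau)$ is small. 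This forces pointwise-in-time control of $\|f(u)\|_{L^m}$, $\|f(u)\|_{L^q}$ and $\|f(u)\|_{\dot{H}^{\max\{s-2,0\}}_q}$. The first two are handled by $|f(u)|\lesssim |u|^p$, Hölder's inequality and Gagliardo-Nirenberg
\begin{align*}
\|u(\tau,\cdot)\|_{L^{rp}}\lesssim \|u(\tau,\cdot)\|_{L^q}^{1-\theta_r}\|u(\tau,\cdot)\|_{\dot{H}^s_q}^{\theta_r},\qquad r\in\{m,q\},
\end{align*}
which is available thanks to $p\geqslant q/m$ (so that $mp\geqslant q$) combined with $s>n/q$ (so that $H^s_q\hookrightarrow L^\infty$). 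The homogeneous fractional norm $\|f(u)\|_{\dot{H}^{\max\{s-2,0\}}_q}$ is controlled by the fractional chain rule for power-type $f$ and a fractional Gagliardo-Nirenberg inequality, trading $p-1$ powers of $\|u\|_{L^\infty}$ (bounded via $\|u\|_{H^s_q}$) against one factor of $\|u\|_{\dot{H}^{s}_q}$.

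\textbf{Time integrability and main obstacle.} Inserting all these bounds into the Duhamel representation and using $\|u\|^p_{L^q},\|u\|^{p-1}_{L^q}\|u\|_{\dot{H}^s_q}\lesssim \|u\|^p_{X(T)}(1+\tau)^{p\alpha_0-\theta(\tau)}$ for suitable $\theta$, the whole argument reduces to verifying that integrals of the form
\begin{align*}
\int_0^{t/2}(1+t-\tau)^{-\beta_1}(1+\tau)^{p\alpha_0}\,\mathrm{d}\tau+\int_{t/2}^t(1+t-\tau)^{-\beta_2}(1+\tau)^{p\alpha_0}\,\mathrm{d}\tau
\end{align*}
are bounded by a constant multiple of $(1+t)^{\alpha_0}$, with a similar statement for the $\dot{H}^s_q$-weight $(1+t)^{\alpha_s}$. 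This is where the hypotheses of the theorem enter decisively: $n>2m(1+\kappa)$ ensures the positivity of the denominator controlling the crucial exponent, and
\begin{align*}
p>1+\max\left\{\tfrac{n(1-\tfrac{m}{q})+ms}{n-2m(1+\kappa)},\,s\right\}
\end{align*}
guarantees both the convergence of the $[0,t/2]$ integral uniformly in $t$ and the matching of the remaining weight with $(1+t)^{\alpha_0}$, $(1+t)^{\alpha_s}$. The balancing of these exponents across the two time-sub-intervals, together with the correct book-keeping of the two derivatives coming from $\Delta$ and of the fractional chain rule on $\dot{H}^{s-2}_q$, is the main technical obstacle; once accomplished, the Lipschitz estimate follows verbatim from $|f(u)-f(v)|\lesssim(|u|^{p-1}+|v|^{p-1})|u-v|$ and Hölder plus the same Gagliardo-Nirenberg interpolations, and the Banach fixed point theorem then yields the unique global solution $u\in\mathcal{C}([0,\infty),H^s_q)$ together with the announced decay rates.
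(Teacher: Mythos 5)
Your overall architecture --- the Duhamel representation, the time-weighted space $X(T)=\mathcal{C}([0,T],H^s_q)$ with exactly the weights $(1+t)^{-\alpha_0}$ and $(1+t)^{-\alpha_0+\frac{s}{2}}$, the nonlinear estimates via the fractional Gagliardo--Nirenberg inequality, the fractional powers rule and the fractional Sobolev embedding, and Banach's fixed point theorem --- coincides with the paper's. However, there is a concrete book-keeping error in the Duhamel term. To bound $\|\,|D|^s u^{\rm nl}(t,\cdot)\|_{L^q}$ you claim it suffices to control $\|f(u(\tau,\cdot);p)\|_{\dot H^{\max\{s-2,0\}}_q}$. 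Writing $|D|^s\bigl(K_1(t-\tau,\cdot)\ast\Delta f(u)\bigr)=|D|^{a}K_1(t-\tau,\cdot)\ast|D|^{s+2-a}f(u)$, landing on the $\dot H^{s-2}_q$ norm of $f(u)$ forces $a=4$ derivatives onto the kernel, and by Proposition \ref{L^r_High} the high-frequency part then satisfies $\|\,|D|^4K_1(t-\tau,\cdot)\chi_{\rm H}\|_{L^1}\lesssim (t-\tau)^{-1}$, which is not integrable as $\tau\to t$. The correct split, and the one the paper uses, is $a=2$: only the two derivatives of $\Delta$ go on the kernel (so that $\|\,|D|^2K_1\chi_{\rm H}\|_{L^1}\lesssim 1$), and one must control $\|f(u(\tau,\cdot);p)\|_{\dot H^{s}_q}$. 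This is precisely why the fractional powers rule has to be applied with exponent $s\in(n/q,p)$ and is the source of the hypothesis $p>1+s$; with your exponent $s-2$ that hypothesis would not be seen to be necessary.

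A second, subtler issue concerns the split $[0,t]=[0,t/2]\cup[t/2,t]$ with pure $L^q$--$L^q$ estimates on the second piece. The low-frequency $L^q$--$L^q$ bound for $\Delta K_1(t-\tau,\cdot)$ grows like $(1+t-\tau)^{\frac12[\frac n2-1]}$ (Proposition \ref{L^r_Low} with $r=1$, $j=1$, $\alpha=2$), so integrating over $[t/2,t]$ costs a factor $(1+t)^{1+\frac12[\frac n2-1]}$ and, for $n\geqslant4$, demands strictly more decay from $\|f(u(\tau,\cdot);p)\|_{L^q}$ than condition \eqref{Exponent_Condition} supplies. The paper avoids this by applying the $(L^m\cap L^q)$--$L^q$ estimate of Proposition \ref{Linear_Estimates_Prop} on all of $[0,t]$ --- these kernel bounds are not singular at $t-\tau=0$, so no splitting of the time interval is needed --- and then invoking Lemma \ref{UsefulIntegral}, whose case analysis on $\max\{\alpha,\beta\}\gtrless 1$ is where the threshold $1+\frac{n(1-\frac mq)+ms}{n-2m(1+\kappa)}$ actually emerges. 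As written, your plan would establish the theorem only with the wrong regularity requirement on the nonlinearity and for a smaller range of $p$.
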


\begin{remark}
Let us explain the conditions for $p$ appearing in Theorem \ref{Global_Theorem}. The presence of $p\geqslant q/m$ is due to the application of the fractional Gagliardo-Nirenberg inequality (cf. Proposition \ref{Fractional_GN}). In addition, by applying the fractional powers rule (cf. Proposition \ref{Fractional_Powers}) associated with the fractional Sobolev embedding (cf. Proposition \ref{Fractional_Embedding}), we require the condition $p> 1+ s$ as long as the assumption $s >n/q$ holds. Finally, the remaining condition for $p$ in \eqref{Exponent_Condition} is to entail the same decay estimates for solutions to \eqref{Equation_Main} as those for solutions to the corresponding linear model with vanishing right-hand side (see later, Proposition \ref{Linear_Estimates_Prop}). This means no loss of decay, so we can understand the nonlinear term as a small perturbation. At this point, we want to underline that one can relax this restriction for $p$ by using the strategy which allows some loss of decay (see more \cite{Dao-Reissig=2019-1}), nevertheless, we have to pay another condition for the dimension $n$. Hence, this purpose is out of our interest.
\end{remark}

\begin{remark}
It is obvious to recognize that Theorem \ref{Global_Theorem} gives the global (in time) well-posedness in the $L^q$ framework for any $q\in(1,\infty)$ and some $L^q$ estimates for solutions to the nonlinear viscous Boussinesq equation \eqref{Equation_Main} with small data, which is really different from the view of previous results devoting to the $L^2$ setting only. With the choice of $n=3$ and $m=1$ we can observe the following examples:
\begin{itemize}
\item If we take $q= 1.01$, then we may choose $s=3$. From Theorem \ref{Global_Theorem} we obtain $p>4.03$.
\item If we take $s= \varepsilon$ for any small value of $\varepsilon>0$, then we may choose $q= 4/\varepsilon$. From Theorem \ref{Global_Theorem} we obtain $p\geqslant 4/\varepsilon$.
\end{itemize}
The point worth noticing in the second example is that thanks to the flexibility of the choice of parameter $q \in (1,\ity)$, we have derived a result for the global (in time) Sobolev solution belonging to the class
$$ u \in \ml{C}\left([0,\ity),H^s_q\right)$$
with an arbitrarily small regularity. More transparently, when two parameters $n$ and $m$ are chosen appropriately, let us illustrate the global existence range in Theorem \ref{Global_Theorem} depicting the interaction between either the admissible exponent $p$ and the regularity $s$ (for a fixed $q$ value) or the admissible exponent $p$ and the parameter $q$ (for a fixed $s$ value) in the following figure:

\begin{figure}[H]
\begin{center}
\begin{tikzpicture}[>=latex,xscale=0.95,scale=1.1]
\draw[->] (0,0) -- (5,0)node[below]{$s$};
\draw[->] (0,0) -- (0,5)node[left]{$p$};
\node[below left] at(0,0){$0$};
\node[below] at (2.5,-.5) {{\footnotesize a. When $n$, $m$ and $q$ are fixed}};

\fill[color=black!15!white] (0.5,4.8) -- (0.5,2.7) -- (2.5,3.5) -- (3.8,4.8)--cycle;

\fill[color=black!15!white] (2.7,1)--(2.7,1.3)--(3,1.3)--(3,1)--cycle;
\node[right] at (3,1.15) {{\footnotesize \text{Global existence}}};

\draw[<-] (3.3,4.15) -- (3.7,4.15)node[right]{$p=s+1$};

\draw[<-] (1.5,3) -- (2.7,3)node[right]{$\Gamma_1$};

\draw[fill] (0.5,0) circle[radius=1pt];
\node[below] at (0.5,0){{\scriptsize $\f{n}{q}$}};
\draw[dashed] (0.5,0)--(0.5,2.7);
\draw[thin] (0.5,2.7)--(0.5,4.8);

\draw[fill] (0,1) circle[radius=1pt];
\node[left] at (0,1){{\scriptsize $1$}};
\draw[dashed] (0,1)--(2.5,3.5);
\draw[thin] (2.5,3.5)--(3.8,4.8);

\draw[fill] (0,2) circle[radius=1pt];
\node[left] at (0,2){{\scriptsize $\frac{q}{m}$}};
\draw[dashed] (0,2)--(3.5,2);

\draw[dashed] (0,2.5)--(0.5,2.7);
\draw[thin] (0.5,2.7)--(2.5,3.5);

\draw[->] (7,0) -- (12,0)node[below]{$q$};
\draw[->] (7,0) -- (7,5)node[left]{$p$};
\node[below left] at(7,0){$0$};
\node[below] at (9.5,-.5) {{\footnotesize b. When $n$, $m$ and $s$ are fixed}};

\fill[domain=2.54:3.86,variable=\p,color=black!15!white] plot ({4.65+13.7/\p},\p)-- (8.2,3.86) --(8.2,4.8)--(11.8,4) -- (10.05,2.54)--cycle;

\fill[color=black!15!white] (9.7,1)--(9.7,1.3)--(10,1.3)--(10,1)--cycle;
\node[right] at (10,1.15) {{\footnotesize \text{Global existence}}};

\draw[<-] (11.3,3.5) -- (11.6,3.5)node[right]{$p=\f{q}{m}$};

\draw[<-] (9.5,3) -- (11,3)node[right]{$\Gamma_2$};

\draw[fill] (8.2,0) circle[radius=1pt];
\node[below] at (8.2,0){{\scriptsize $\max\big\{\f{n}{s},m\big\}$}};
\draw[dashed] (8.2,0)--(8.2,3.86);
\draw[thin] (8.2,3.86)--(8.2,4.8);

\draw[fill] (7,2) circle[radius=1pt];
\node[left] at (7,2){{\scriptsize $1+s$}};
\draw[dashed] (7,2)--(11.8,2);
\draw[dashed] (7,0)--(10.05,2.54);
\draw[thin] (10.05,2.54)--(11.8,4);

\draw[domain=2.54:3.86,color=black,variable=\p] plot ({4.65+13.7/\p},\p);

\end{tikzpicture}
\caption{The global existence range in Theorem \ref{Global_Theorem} depending on some parameters.}
\label{fig.zone}
\end{center}
\end{figure}
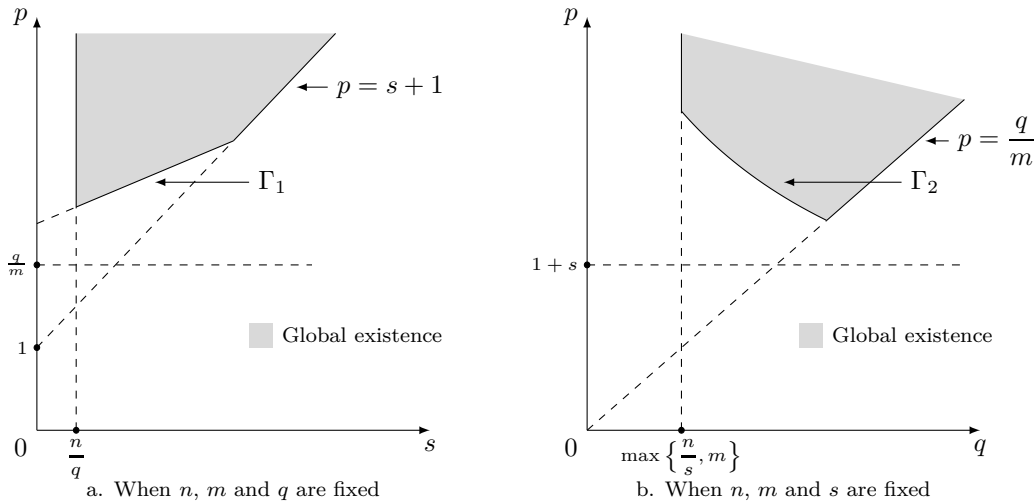
\noindent Here, $\Gamma_1$ and $\Gamma_2$ stand for the following graphs:
$$p= p(s)= 1+\f{n(1-\frac{m}{q})+ms}{n-2m(1+\kappa)} \ \  \text{and}\ \  p= p(q)= 1+\f{n(1-\frac{m}{q})+ms}{n-2m-(1-\frac{m}{q})[\frac{n}{2}-1]}$$
in Figure \ref{fig.zone}.a and Figure \ref{fig.zone}.b, respectively.
\end{remark}

\subsection{Preliminaries: Some useful estimates}
 For proving the global (in time) well-posedness result for the nonlinear problem \eqref{Equation_Main}, the following $L^{q}- L^{q}$ estimate with additional $L^{m}$ regularity for the Cauchy data from the corresponding linear equation \eqref{Equation_Main_Linear} is one of the most essential ingredients in our proof.
\begin{prop} \label{Linear_Estimates_Prop}
 Let $s\geqslant0$, $q\in (1,\ity)$ and $m\in [1,q)$. Then, the solutions to \eqref{Equation_Main_Linear} satisfy the following $(L^m \cap L^q)-L^q$ estimates:
\begin{align*}
    \|v(t,\cdot )\|_{\dot{H}^s_q}& \lesssim
        (1+t)^{-\frac{n}{2}(1-\frac{1}{r})+\frac{1}{2r}[\frac{n}{2}]-\frac{s}{2}} \|v_0\|_{L^m\cap H^s_q}+ (1+t)^{1-\frac{n}{2}(1-\frac{1}{r})+\frac{1}{2r}[\frac{n}{2}-1]-\frac{s}{2}} \|v_1\|_{L^m\cap H^{\max\{s-2,0\}}_q}
\end{align*}
 for all $n\geqslant 1$, where $1+ 1/q= 1/r+1/m$.
\end{prop}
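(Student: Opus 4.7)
I would split the solution via the three-zone frequency decomposition from Section~\ref{Section_Lm-Lq}, writing $v=v_{\rm L}+v_{\rm M}+v_{\rm H}$ according to the cut-offs $\chi_{\rm L},\chi_{\rm M},\chi_{\rm H}$, and estimate each zone separately in $\dot H^s_q$. The crucial point is to apply Young's convolution inequality in two different configurations: $L^r\ast L^m\hookrightarrow L^q$ with $1+1/q=1/r+1/m$ for the low and middle zones (extracting the $L^m$ integrability of the data), and $L^1\ast L^q\hookrightarrow L^q$ for the high zone (extracting the $H^s_q$, resp.\ $H^{\max\{s-2,0\}}_q$, regularity of the data). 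This asymmetry is precisely what upgrades the $L^m$-$L^q$ estimate of Theorem~\ref{Thm_L^m-L^q} to the $(L^m\cap L^q)$-$L^q$ bound sought here.

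For the low-frequency zone, Young's inequality combined with Proposition~\ref{L^r_Low} (applied with $\alpha=s$) directly yields
$$
\||D|^s v_{\rm L}(t,\cdot)\|_{L^q}\lesssim \sum_{j=0,1}(1+t)^{j-\frac{n}{2}(1-\frac{1}{r})+\frac{1}{2r}[\frac{n}{2}-j]-\frac{s}{2}}\|v_j\|_{L^m},
$$
which matches the polynomial rates of the target exactly. For the middle zone, the estimate~\eqref{estimate_middle} gives an $L^1$ kernel of size $\mathrm{e}^{-ct}$, so Young's inequality with $L^1\ast L^q\hookrightarrow L^q$ produces a contribution bounded by $\mathrm{e}^{-ct}(\|v_0\|_{L^q}+\|v_1\|_{L^q})$, which is absorbed into any polynomial decay rate.

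The delicate piece is the high-frequency zone. I would use the factorization
$$
|\xi|^s\,\widehat{K}_j(t,|\xi|)\,\chi_{\rm H}(|\xi|) = m_j(t,\xi)\,\langle\xi\rangle^{\sigma_j},\qquad \sigma_0=s,\ \ \sigma_1=\max\{s-2,0\},
$$
so that Young's inequality yields $\||D|^s\chi_{\rm H}(D)K_j(t,|D|)v_j\|_{L^q}\lesssim \|\mathcal{F}^{-1}m_j(t,\cdot)\|_{L^1}\,\|v_j\|_{H^{\sigma_j}_q}$. For $j=0$, the identity $|\xi|^s\langle\xi\rangle^{-s}=1+\mathcal{O}(\langle\xi\rangle^{-2})$ on $\mathrm{supp}\,\chi_{\rm H}$ reduces the $L^1$ estimate of $m_0$ to Proposition~\ref{L^1_High} applied with $\alpha=0$, plus a smoothing remainder that is absorbed by the $|\xi|^{-2}$ correction. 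For $j=1$ with $s\geqslant 2$, the key gain is that on $\mathrm{supp}\,\chi_{\rm H}$ one has $|\xi|^s\langle\xi\rangle^{-(s-2)}=|\xi|^2\bigl(1+\mathcal{O}(\langle\xi\rangle^{-2})\bigr)$, whence Proposition~\ref{L^1_High} with $\alpha=2,\ j=1$ delivers a uniform $L^1$ kernel bound; the case $s<2$ reduces directly to Proposition~\ref{L^1_High} with $\alpha=s,\ j=1$. Any missing polynomial time decay for $t\geqslant 1$ is then supplied by the exponential damping $\mathrm{e}^{-c|\xi|^2 t}$ built into $\widehat{K}_j$ on $\mathrm{supp}\,\chi_{\rm H}$.

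The principal obstacle is the careful bookkeeping of the regularity shift $s\mapsto\max\{s-2,0\}$ for $v_1$: one must extract the $|\xi|^{-2}$ smoothing of $\widehat{K}_1$ at high frequencies without degrading the polynomial time decay, and then match the at-worst-linear-in-$t$ growth coming from Proposition~\ref{L^1_High} with the exponential high-frequency damping in the regime $t\geqslant 1$. Once the three zones are summed, the polynomial rates from the low zone dominate the bounded or exponentially decaying contributions from the middle and high zones, yielding the claimed $(L^m\cap L^q)$-$L^q$ estimate.
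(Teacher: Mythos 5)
Your proposal is correct and follows essentially the same route as the paper's (much terser) proof: a low/middle/high frequency split, with Young's inequality in the configuration $L^r\ast L^m\hookrightarrow L^q$ together with Proposition~\ref{L^r_Low} for low frequencies, and $L^1\ast L^q\hookrightarrow L^q$ together with the high-frequency kernel estimates (Proposition~\ref{L^1_High}, i.e.\ the $r=1$ case of Proposition~\ref{L^r_High}) plus the regularity shift $s\mapsto\max\{s-2,0\}$ for $v_1$ at high frequencies. Your write-up merely makes explicit the bookkeeping (the $|\xi|^s\langle\xi\rangle^{-\sigma_j}$ factorization and the use of the exponential damping on $\mathrm{supp}\,\chi_{\rm H}$ for $t\geqslant 1$) that the paper leaves implicit.
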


\begin{proof}
 As a consequence of Propositions \ref{L^r_Low} and \ref{L^r_High}, we estimate the $L^q$ norm of solutions for low frequencies by the $L^m$ norm of the data, whereas their high frequency parts are controlled by using $L^q-L^q$ estimates with a suitable regularity of initial data. In this way, after the immediate application of Young's convolution inequality linked to Propositions \ref{L^r_Low} and \ref{L^r_High}, we may achieve the desired estimates.
\end{proof}

Furthermore, the behavior of the following integral will be also necessary in the sequel.

\begin{lemma} \label{UsefulIntegral}
Let $\alpha, \beta \in \R$. It holds
$$ \int_0^t (1+t-\tau)^{-\alpha}(1+\tau)^{-\beta}\rm{d}\tau \lesssim
\begin{cases}
(1+t)^{-\min\{\alpha, \beta\}} &\text{if}\ \ \max\{\alpha, \beta\}>1,\\
(1+t)^{-\min\{\alpha, \beta\}}\log(\mathrm{e}+t) &\text{if}\ \ \max\{\alpha, \beta\}=1,\\
(1+t)^{1-\alpha-\beta} &\text{if}\ \ \max\{\alpha, \beta\}<1.\\
\end{cases} $$
\end{lemma}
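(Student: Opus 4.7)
The plan is to use the standard ``splitting at the midpoint'' trick, which reduces a two-parameter convolution-type integral to two one-variable integrals that can be computed elementarily. Specifically, I would write
$$\int_0^t(1+t-\tau)^{-\alpha}(1+\tau)^{-\beta}\,\mathrm{d}\tau = \int_0^{t/2}(\cdots)\,\mathrm{d}\tau + \int_{t/2}^t(\cdots)\,\mathrm{d}\tau =: I_1(t)+I_2(t).$$
On $[0,t/2]$ one has $1+t-\tau \sim 1+t$, so $I_1(t)\lesssim (1+t)^{-\alpha}\int_0^{t/2}(1+\tau)^{-\beta}\mathrm{d}\tau$. Symmetrically, by the substitution $\sigma=t-\tau$, $I_2(t)\lesssim (1+t)^{-\beta}\int_0^{t/2}(1+\sigma)^{-\alpha}\mathrm{d}\sigma$. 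So everything reduces to the elementary primitive
$$J_\gamma(T):=\int_0^T(1+\tau)^{-\gamma}\mathrm{d}\tau \lesssim \begin{cases}1 &\text{if } \gamma>1,\\ \log(\mathrm{e}+T) &\text{if } \gamma=1,\\ (1+T)^{1-\gamma} &\text{if } \gamma<1.\end{cases}$$

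Next, I would plug this in and match the three cases of the lemma. In Case 1 ($\max\{\alpha,\beta\}>1$), assume by symmetry $\alpha\geqslant\beta$ so $\alpha>1$. Then $I_2\lesssim(1+t)^{-\beta}J_\alpha(t/2)\lesssim(1+t)^{-\beta}=(1+t)^{-\min\{\alpha,\beta\}}$. For $I_1$, if $\beta>1$ or $\beta=1$ one gets $(1+t)^{-\alpha}$ times a bounded or logarithmic factor, which is dominated by $(1+t)^{-\beta}$ since $\alpha>\beta$ leaves a polynomial gain to absorb the log; if $\beta<1$ one gets $(1+t)^{-\alpha+1-\beta}\lesssim(1+t)^{-\beta}$ because $\alpha>1$. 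In Case 2 ($\max\{\alpha,\beta\}=1$), again by symmetry say $\alpha=1$, $\beta\leqslant 1$; the factor $J_1(t/2)\lesssim\log(\mathrm{e}+t)$ is exactly what produces the logarithmic correction. In Case 3 ($\max\{\alpha,\beta\}<1$), both primitives contribute polynomial growth, and the exponents add up to give $(1+t)^{1-\alpha-\beta}$ from both $I_1$ and $I_2$.

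The only genuine obstacle I expect is the bookkeeping across the sub-cases: one has to verify in each configuration of $(\alpha,\beta)$ (both above $1$, both below, one equal to $1$, one negative, etc.) that the polynomial bound on $I_1$ is dominated by the polynomial bound on $I_2$ or vice-versa, so that the worst of the two gives exactly $(1+t)^{-\min\{\alpha,\beta\}}$ (or the other stated rates). No analytic subtlety enters; the argument is entirely elementary once the splitting at $t/2$ is performed, and the symmetric roles of $\alpha$ and $\beta$ cut the work roughly in half.
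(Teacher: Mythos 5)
Your proposal is correct and follows essentially the same route as the paper: split at $\tau=t/2$, use $1+t-\tau\approx 1+t$ on $[0,t/2]$ and the substitution $\sigma=t-\tau$ on $[t/2,t]$, and reduce to the elementary primitive $\int_0^{T}(1+\tau)^{-\gamma}\,\mathrm{d}\tau$. The paper merely compresses your case bookkeeping by bounding both halves at once by $(1+t)^{-\min\{\alpha,\beta\}}\int_0^{t/2}(1+\tau)^{-\max\{\alpha,\beta\}}\,\mathrm{d}\tau$, which is the same estimate you verify case by case.
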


\begin{proof}
Thanks to the asymptotic behaviors of $1+t-\tau \approx 1+t$ for any $\tau \in [0,t/2]$ as well as $1+\tau \approx 1+t$ for any $\tau \in [t/2,t]$, we carry out the change of variables when needed to conclude
\begin{align*}
\int_0^t (1+t-\tau)^{-\alpha}(1+\tau)^{-\beta}\rm{d}\tau 
&\lesssim (1+t)^{-\alpha} \int_0^{t/2}(1+\tau)^{-\beta}\rm{d}\tau+ (1+t)^{-\beta} \int_0^{t/2}(1+\tau)^{-\alpha}\rm{d}\tau \\
&\lesssim (1+t)^{-\min\{\alpha, \beta\}} \int_0^{t/2}(1+\tau)^{-\max\{\alpha, \beta\}}\rm{d}\tau.
\end{align*}
Consequently, this completes the proof of Lemma \ref{UsefulIntegral}.
\end{proof}

\subsection{Philosophy of the proof}
Recalling the fundamental solutions $K_0(t,x)$ and $K_1(t,x)$ defined in Section \ref{Section_Lm-Lq}, we may express the solutions to the linear equation \eqref{Equation_Main_Linear} by
\begin{align*}
	v(t,x)=K_0(t,x) \ast_{(x)} v_0(x)+ K_1(t,x) \ast_{(x)} v_1(x).
\end{align*}
We may apply Duhamel's principle to derive the following integral equations associated with \eqref{Equation_Main}:
\begin{align*}
u(t,x)&= K_0(t,x) \ast_{(x)} u_0(x)+ K_1(t,x) \ast_{(x)} u_1(x) + \int_0^t K_1(t-\tau,x) \ast_{(x)} \Delta f\left(u(\tau,x);p\right) \rm{d}\tau\\
&=: u^{\rm ln}(t,x)+ u^{\rm nl}(t,x).
\end{align*}
Later, we will determine  the family $\{X(T)\}_{T>0}$ of evolution spaces appropriately. We introduce the operator $\Phi$ such that
$$\Phi:\,\, u(t,x)\in X(T)\to \Phi[u](t,x):= u^{\rm ln}(t,x)+ u^{\rm nl}(t,x) $$
for $T>0$. From the setting, to establish the global (in time) well-posedness for the nonlinear problem \eqref{Equation_Main}, we shall prove a fixed point of the operator $\Phi$ to be the solution. For this purpose, it suffices to verify the following two score inequalities:
\begin{align}
\|\Phi[u]\|_{X(T)}& \lesssim \|(u_0,u_1)\|_{\mathcal{D}}+ \|u\|^p_{X(T)}, \label{Phi_INE_1}\\
\|\Phi[u]-\Phi[\bar{u}]\|_{X(T)}& \lesssim \|u-\bar{u}\|_{X(T)} \left(\|u\|^{p-1}_{X(T)}+ \|\bar{u}\|^{p-1}_{X(T)}\right), \label{Phi_INE_2}
\end{align}
under some conditions for $p$. In the desired estimate \eqref{Phi_INE_2}, $u$ and $\bar{u}$ are two solutions to the nonlinear viscous Boussinesq equation. Subsequently, an employment of Banach's fixed point theorem leads to local (in time) existence results of large data solutions and global (in time) existence results of small data solutions simultaneously. This idea has been used in semilinear damped wave equations, for example, \cite{Palmieri-Reissig=2018,D'Abbicco-Palmieri=2021}. 

\subsection{Existence of unique global (in time) solution: Proof of Theorem \ref{Global_Theorem}}
	As mentioned in the previous part, to get started, we introduce the solution space by
	$$ X(T):= \ml{C}\left([0,T],H^s_q\right) $$
	for $T>0$, endowed with the corresponding norm
	$$ \|u\|_{X(T)}:= \sup_{0\leqslant \tau \leqslant T} \left((1+\tau)^{-1+\frac{n}{2}(1-\frac{1}{r})-\kappa}\|u(\tau,\cdot)\|_{L^q}+ (1+\tau)^{-1+\frac{n}{2}(1-\frac{1}{r})-\kappa+\frac{s}{2}}\|\,|D|^s u(\tau,\cdot)\|_{L^q}\right)$$
	with $1+1/q=1/r+1/m$.
	
	Let us now estimate the nonlinear term in some norms. For the sake of indicating the both inequalities \eqref{Phi_INE_1} and \eqref{Phi_INE_2}, we shall verify the following auxiliary ingredients for the nonlinear term:
	\begin{align}
	\|f\left(u(\tau,\cdot);p\right)\|_{L^m}\lesssim \|u(\tau,\cdot)\|^p_{L^{mp}} &\lesssim (1+\tau)^{-\frac{n}{2m}(p-1)+p(1+\kappa)}\|u\|^p_{X(T)}, \label{NL_Estimate_1}\\ 
	\|f\left(u(\tau,\cdot);p\right)\|_{L^q}\lesssim \|u(\tau,\cdot)\|^p_{L^{pq}} &\lesssim (1+\tau)^{-\frac{n}{2}(\frac{p}{m}-\frac{1}{q})+p(1+\kappa)}\|u\|^p_{X(T)}, \label{NL_Estimate_2}\\
	\|f\left(u(\tau,\cdot);p\right)\|_{\dot{H}^s_q} &\lesssim (1+\tau)^{-\frac{n}{2m}(p-1)+p(1+\kappa)-\frac{s}{2}}\|u\|^p_{X(T)}, \label{NL_Estimate_3}
	\end{align}
	for $\tau\in[0,T]$. Indeed, the application of the fractional Gagliardo-Nirenberg inequality from Proposition \ref{Fractional_GN} combined with the definition of the solution space gives immediately these estimates \eqref{NL_Estimate_1} and \eqref{NL_Estimate_2}, accompanied with
	$$ p \in \left[\frac{q}{m}, \infty \right) \ \ \text{and} \  \ s>\f{n}{q}, \text{ i.e. }  n< qs. $$
	In order to show \eqref{NL_Estimate_3}, we apply the fractional powers rule from Proposition \ref{Fractional_Powers} for $s\in (n/q,p)$ together with the fractional Sobolev embedding from Proposition \ref{Fractional_Embedding} for a suitable $0<s^* <n/q$ to obtain
    \begin{align*}
        \|f\left(u(\tau,\cdot);p\right)\|_{\dot{H}^s_q}\lesssim \big\||u(\tau,\cdot)|^p\big\|_{\dot{H}^s_q} &\lesssim \|u(\tau,\cdot)\|_{\dot{H}^s_q}\,\|u(\tau,\cdot)\|^{p-1}_{L^\ity} \\
        &\lesssim \|u(\tau,\cdot)\|_{\dot{H}^s_q}\big(\|u(\tau,\cdot)\|_{\dot{H}^{s^*}_q}+ \|u(\tau,\cdot)\|_{\dot{H}^s_q}\big)^{p-1}.
    \end{align*}
    After employing the fractional Gagliardo-Nirenberg inequality from Proposition \ref{Fractional_GN}, we derive
    \begin{align*}
         \|u(\tau,\cdot)\|_{\dot{H}^{s^*}_q} &\lesssim \|u(\tau,\cdot)\|^{1-\theta_1}_{L^q}\,\|u(\tau,\cdot)\|^{\theta_1}_{\dot{H}^s_q} \lesssim (1+\tau)^{1-\frac{n}{2}(1-\frac{1}{r})+\kappa-\frac{s^*}{2}}\|u\|_{X(T)}
    \end{align*}
    with $\theta_1= s^*/s$. Therefore, one has
    \begin{align*}
        \|f\left(u(\tau,\cdot);p\right)\|_{\dot{H}^s_q} &\lesssim (1+\tau)^{-\frac{np}{2}(1-\frac{1}{r})+p(1+\kappa)-\frac{s}{2}-(p-1)\frac{s^*}{2}} \|u\|^p_{X(T)} \\
        &\lesssim (1+\tau)^{-\frac{n}{2m}(p-1)+p(1+\kappa)-\frac{s}{2}}  \|u\|^p_{X(T)},
        \end{align*}
    where we took  $s^*= n/q-\e$ with a sufficiently small positive $\e$. Hence, our verifications for \eqref{NL_Estimate_1}-\eqref{NL_Estimate_3} are accomplished. Now, we are on the way to prove the inequalities \eqref{Phi_INE_1} and \eqref{Phi_INE_2}.\medskip
    
    First let us prove the inequality \eqref{Phi_INE_1}. Thanks to the statements in Proposition \ref{Linear_Estimates_Prop}, one may obviously arrive at
	    $$ \|u^{\rm ln} \|_{X(T)} \lesssim \|(u_0,u_1)\|_{\mathcal{D}} $$
	from the definition of the norm of $X(T)$. For this reason, we reduce the proof of \eqref{Phi_INE_1} to
	\begin{equation}
	    \|u^{\rm nl}\|_{X(T)} \lesssim \|u\|^p_{X(T)}. \label{Phi_INE_1*}
	 \end{equation}
    Next, our proof is separated into two steps as follows:
    
    \noindent \underline{Step 1}:  To estimate $\|u^{\rm nl}(t,\cdot)\|_{L^q}$, we use the $(L^m \cap L^q)- L^q$ estimates in Proposition \ref{Linear_Estimates_Prop} to get
	\begin{align*}
	    \|u^{\rm nl}(t,\cdot)\|_{L^q}&\lesssim \int_0^t \left\|\Delta K_1(t-\tau,x) \ast_x f\left(u(\tau,x);p\right)\right\|_{L^q}\rm{d}\tau \\
	    &\lesssim \int_0^t (1+t-\tau)^{-\frac{n}{2}(1-\frac{1}{r})+\kappa}\|f\left(u(\tau,\cdot);p\right)\|_{L^m\cap L^q} \rm{d}\tau \\
	    &\lesssim \|u\|^p_{X(T)} \int_0^t (1+t-\tau)^{-\frac{n}{2}(1-\frac{1}{r})+\kappa}(1+\tau)^{-\frac{n}{2m}(p-1)+p(1+\kappa)} \mathrm{d}\tau =: \|u\|^p_{X(T)}I_1(t),
	\end{align*}
	where we have applied the both estimates \eqref{NL_Estimate_1} and \eqref{NL_Estimate_2} in the last chain. Now, Lemma \ref{UsefulIntegral} comes into play in term of estimate for $I_1(t)$ by denoting
	    $$\alpha= \frac{n}{2}\left(1-\frac{1}{r}\right)-\kappa\ \  \text{and}\ \  \beta= \frac{n}{2m}(p-1)-p(1+\kappa). $$
	Here, one  recognizes that the condition \eqref{Exponent_Condition} follows
	    $$p> 1+\f{n(1-\frac{m}{q})+ms}{n-2m(1+\kappa)}> 1+\f{n}{n-2m(1+\kappa)}> 1+\f{2m(1+\kappa)}{n-2m(1+\kappa)}$$
	due to $s >n/q$ and $n> 2m(1+\kappa)$. This leads to $\beta>0$.
	\begin{itemize} 
    \item[$\bullet$] \textbf{Case 1}: Providing that 
    \begin{align*}
    	p>1+\frac{n\left(1-\frac{m}{q}\right)+m\,\max\{s,2\}}{n-2m(1+\kappa)},
    \end{align*}
    then it is clear to see $\max\{\alpha,\beta\}= \beta>0$, i.e. $\min\{\alpha,\beta\}= \alpha$. Thus, the application of Lemma \ref{UsefulIntegral} implies
    \begin{align*}
    	I_1(t) \lesssim (1+t)^{1-\alpha},
    \end{align*}

    which gives
    \begin{equation}\label{Estimate_1}
        \|u^{\rm nl}(t,\cdot)\|_{L^q}\lesssim (1+t)^{1-\frac{n}{2}(1-\frac{1}{r})+\kappa}\|u\|^p_{X(T)}. 
    \end{equation}
    \item[$\bullet$] \textbf{Case 2}: Providing that
    \begin{align*}
    1+\frac{n\left(1-\frac{m}{q}\right)+m\,\max\{s,2\}}{n-2m(1+\kappa)}\geqslant p>1+\frac{n\left(1-\frac{m}{q}\right)+ms}{n-2m(1+\kappa)},	
    \end{align*}
     then one realizes that $\max\{\alpha,\beta\}= \alpha$, i.e. $\min\{\alpha,\beta\}= \beta$. After employing Lemma \ref{UsefulIntegral}, we achieve
   \begin{align*}
   	I_1(t) &\lesssim \begin{cases}
   		(1+t)^{-\beta} &\text{if}\ \ \alpha>1\\
   		(1+t)^{-\beta}\log(\mathrm{e}+t) &\text{if}\ \ \alpha=1\\
   		(1+t)^{1-\alpha-\beta} &\text{if}\ \ \alpha<1\\
   	\end{cases}\\
   	&\lesssim (1+t)^{1-\alpha}
   \end{align*}
    by noticing that the relation $-\beta< 1-\alpha$ holds in the first line of the previous integral due to the fact 
    $$p> 1+\frac{n(1-\frac{m}{q})}{n-2m(1+\kappa)}. $$
    Again, it follows
    \begin{equation}\label{Estimate_2}
        \|u^{\rm nl}(t,\cdot)\|_{L^q}\lesssim (1+t)^{1-\frac{n}{2}(1-\frac{1}{r})+\kappa}\|u\|^p_{X(T)}. 
    \end{equation}
    \end{itemize}
    
	\noindent \underline{Step 2}:  To control $\|\,|D|^s u^{\rm nl}(t,\cdot)\|_{L^q}$, we will use the same strategy as in Step 1. In particular, applying the $(L^m \cap L^q)- L^q$ estimates in Proposition \ref{Linear_Estimates_Prop} one obtains
	\begin{align*}
	    \|\,|D|^s u^{\rm nl}(t,\cdot)\|_{L^q}&\lesssim \int_0^t \left\|\Delta K_1(t-\tau,x) \ast_x f\left(u(\tau,x);p\right)\right\|_{H^s_q}\rm{d}\tau \\
	    &\lesssim \int_0^t (1+t-\tau)^{-\frac{n}{2}(1-\frac{1}{r})+\kappa-\frac{s}{2}}\|f\left(u(\tau,\cdot);p\right)\|_{L^m\cap L^q\cap \dot{H}^s_q} \rm{d}\tau \\
	    &\lesssim \|u\|^p_{X(T)} \int_0^t (1+t-\tau)^{-\frac{n}{2}(1-\frac{1}{r})+\kappa-\frac{s}{2}}(1+\tau)^{-\frac{n}{2m}(p-1)+p(1+\kappa)} \mathrm{d}\tau =: \|u\|^p_{X(T)}I_2(t),
	\end{align*}
	where we have utilized the estimates from \eqref{NL_Estimate_1} to \eqref{NL_Estimate_3} at the last stage of the above chain inequality. Then, repeating some procedures as we did in Step $1$ we may claim
	\begin{align*}
		I_2(t)\lesssim(1+t)^{1-\alpha}
	\end{align*}
so that
	\begin{equation}\label{Estimate_3}
        \|\,|D|^s u^{\rm nl}(t,\cdot)\|_{L^q}\lesssim (1+t)^{1-\frac{n}{2}(1-\frac{1}{r})+\kappa-\frac{s}{2}}\|u\|^p_{X(t)}. 
    \end{equation}
    As a result, by the definition of the norm in $X(T)$ we combine the estimates from \eqref{Estimate_1} to \eqref{Estimate_3} to conclude that the inequality \eqref{Phi_INE_1*} holds.\medskip
    
	Our next aim is to turn the proof of the Lipschitz condition \eqref{Phi_INE_2}. Our proof is based on the similar approach to the  proof of the obtained inequality \eqref{Phi_INE_1*}. Indeed, we may estimate
	\begin{align*}
	     \left|f\left(u(\tau,\cdot);p\right)- f\left(\bar{u}(\tau,\cdot);p\right)\right|\approx \left||u(\tau,\cdot)|^p-|\bar{u}(\tau,\cdot)|^p\right|
	\end{align*}
	in the norms of $L^m$, $L^q$ and $\dot{H}^s_q$. Thanks to H\"{o}lder's inequality, one finds
	\begin{align*}
	    \|\,|u(\tau,\cdot)|^p-|\bar{u}(\tau,\cdot)|^p\|_{L^m} &\lesssim \|u(\tau,\cdot)- \bar{u}(\tau,\cdot)\|_{L^{mp}} \left(\|u(\tau,\cdot)\|^{p-1}_{L^{mp}}+\|\bar{u}(\tau,\cdot)\|^{p-1}_{L^{mp}}\right), \\ 
	    \|\,|u(\tau,\cdot)|^p- |\bar{u}(\tau,\cdot)|^p\|_{L^q} &\lesssim \|u(\tau,\cdot)- \bar{u}(\tau,\cdot)\|_{L^{pq}} \left(\|u(\tau,\cdot)\|^{p-1}_{L^{pq}}+\|\bar{u}(\tau,\cdot)\|^{p-1}_{L^{pq}}\right).
	\end{align*}
	Analogously to the proof of \eqref{NL_Estimate_1} and \eqref{NL_Estimate_2}, the applications of the fractional Gagliardo-Nirenberg inequality from Proposition \ref{Fractional_GN} to deal with the norms
	$$ \|u(\tau,\cdot)- \bar{u}(\tau,\cdot)\|_{L^\gamma}, \quad \|u(\tau,\cdot)\|_{L^\gamma}, \quad \|\bar{u}(\tau,\cdot)\|_{L^\gamma}, $$
	with $\gamma=mp$ and $\gamma=pq$ give the following estimates:
	\begin{align*}
	\|\,|u(\tau,\cdot)|^p-|\bar{u}(\tau,\cdot)|^p\|_{L^m} &\lesssim (1+\tau)^{-\frac{n}{2m}(p-1)+p(1+\kappa)}\|u-\bar{u}\|_{X(T)}\left(\|u\|^{p-1}_{X(T)}+\|\bar{u}\|^{p-1}_{X(T)}\right),\\
	\|\,|u(\tau,\cdot)|^p- |\bar{u}(\tau,\cdot)|^p\|_{L^q} &\lesssim (1+\tau)^{-\frac{n}{2}(\frac{p}{m}-\frac{1}{q})+p(1+\kappa)} \|u-\bar{u}\|_{X(T)}\left(\|u\|^{p-1}_{X(T)}+\|\bar{u}\|^{p-1}_{X(T)}\right).
	\end{align*}
	For another, let us devote to our attention in estimating the difference
	    $$\|\,|u(\tau,\cdot)|^p-|\bar{u}(\tau,\cdot)|^p\|_{\dot{H}^s_q}.$$
	According to the expression
	$$ |u(\tau,x)|^p-|\bar{u}(\tau,x)|^p=p\int_0^1 \left(u(\tau,x)-\bar{u}(\tau,x)\right)G\left(\omega u(\tau,x)+(1-\omega)\bar{u}(\tau,x)\right)\mathrm{d}\omega, $$
 where we set $G(u):=u|u|^{p-2}$, we deduce
	\begin{align*}
	\|\,|u(\tau,\cdot)|^p-|\bar{u}(\tau,\cdot)|^p\|_{\dot{H}^s_q}\lesssim \int_0^1 \left\|\big(u(\tau,\cdot)-\bar{u}(\tau,\cdot)\big)G\left(\omega u(\tau,\cdot)+(1-\omega)\bar{u}(\tau,\cdot)\right)\right\|_{\dot{H}^s_q}\mathrm{d}\omega .
	\end{align*}
	Using the fractional Leibniz rule from Proposition \ref{Fractional_Leibniz} we can proceed
	\begin{align}
	\|\,|u(\tau,\cdot)|^p-|\bar{u}(\tau,\cdot)|^p\|_{\dot{H}^s_q}
	& \lesssim \|u(\tau,\cdot)-\bar{u}(\tau,\cdot)\|_{\dot{H}^s_q} \int_0^1 \big\|G\big(\omega u(\tau,\cdot)+(1-\omega)\bar{u}(\tau,\cdot)\big)\big\|_{L^\ity}\mathrm{d}\omega \nonumber \\
	&\quad+ \big\|u(\tau,\cdot)-\bar{u}(\tau,\cdot)\|_{L^\ity} \int_0^1 \big\|G\big(\omega u(\tau,\cdot)+(1-\omega)\bar{u}(\tau,\cdot)\big)\big\|_{\dot{H}^s_q}\mathrm{d}\omega \nonumber \\
	& \lesssim \|u(\tau,\cdot)-\bar{u}(\tau,\cdot)\|_{\dot{H}^s_q} \left(\|u(\tau,\cdot)\|^{p-1}_{L^\ity}+ \|\bar{u}(\tau,\cdot)\|^{p-1}_{L^\ity}\right) \nonumber \\
	&\quad+ \|u(\tau,\cdot)-\bar{u}(\tau,\cdot)\|_{L^\ity} \int_0^1 \big\|G\big(u(\tau,\cdot)+(1-\omega)\bar{u}(\tau,\cdot)\big)\big\|_{\dot{H}^s_q}\mathrm{d}\omega. \label{Estimate_4}
	\end{align}
	After applying the fractional powers rule from Proposition \ref{Fractional_Powers} with $p>2$ and $s\in (n/q,p-1)$, one derives
	\begin{align*}
	\left\|G\left(u(\tau,\cdot)+(1-\omega)\bar{u}(\tau,\cdot)\right)\right\|_{\dot{H}^s_q}
	& \lesssim \|\omega u(\tau,\cdot)+(1-\omega) \bar{u}(\tau,\cdot)\|_{\dot{H}^s_q}\, \|\omega u(\tau,\cdot)+(1-\omega) \bar{u}(\tau,\cdot)\|^{p-2}_{L^\ity}.
	\end{align*}	
	The employment of the fractional Sobolev embedding from Proposition \ref{Fractional_Embedding} carrying a suitable parameter $s^* <n/q$ yields the following chain of estimates in the $L^\ity$ nomrs:
	\begin{align*}
        \|u(\tau,\cdot)\|_{L^\ity} &\lesssim \|u(\tau,\cdot)\|_{\dot{H}^{s^*}_q}+ \|u(\tau,\cdot)\|_{\dot{H}^s_q},\\ 
        \|\bar{u}(\tau,\cdot)\|_{L^\ity} &\lesssim \|\bar{u}(\tau,\cdot)\|_{\dot{H}^{s^*}_q}+ \|\bar{u}(\tau,\cdot)\|_{\dot{H}^s_q}, \\
        \|u(\tau,\cdot)-\bar{u}(\tau,\cdot)\|_{L^\ity} &\lesssim \|u(\tau,\cdot)-\bar{u}(\tau,\cdot)\|_{\dot{H}^{s^*}_q}+ \|u(\tau,\cdot)-\bar{u}(\tau,\cdot)\|_{\dot{H}^s_q}, \\
        \|\omega u(\tau,\cdot)+(1-\omega) \bar{u}(\tau,\cdot)\|_{L^\ity} &\lesssim \|\omega u(\tau,\cdot)+(1-\omega) \bar{u}(\tau,\cdot)\|_{\dot{H}^{s^*}_q}+ \|\omega u(\tau,\cdot)+(1-\omega) \bar{u}(\tau,\cdot)\|_{\dot{H}^s_q}.
    \end{align*}
    With the help of the fractional Gagliardo-Nirenberg inequality from Proposition \ref{Fractional_GN}, we have
	\begin{align*}
	\|u(\tau,\cdot)\|_{\dot{H}^{s^*}_q}&\lesssim (1+\tau)^{1-\frac{n}{2}(1-\frac{1}{r})+\kappa-\frac{s^*}{2}}\|u\|_{X(T)}, \\
	\|\bar{u}(\tau,\cdot)\|_{\dot{H}^{s^*}_q}&\lesssim (1+\tau)^{1-\frac{n}{2}(1-\frac{1}{r})+\kappa-\frac{s^*}{2}}\|\bar{u}\|_{X(T)}, \\
	\|u(\tau,\cdot)-\bar{u}(\tau,\cdot)\|_{\dot{H}^{s^*}_q}&\lesssim (1+\tau)^{1-\frac{n}{2}(1-\frac{1}{r})+\kappa-\frac{s^*}{2}}\|u(\tau,\cdot)-\bar{u}(\tau,\cdot)\|_{X(T)}, \\
	\|\omega u(\tau,\cdot)+(1-\omega) \bar{u}(\tau,\cdot)\|_{\dot{H}^{s^*}_q}&\lesssim (1+\tau)^{1-\frac{n}{2}(1-\frac{1}{r})+\kappa-\frac{s^*}{2}}\|\omega u(\tau,\cdot)+(1-\omega) \bar{u}(\tau,\cdot)\|_{X(T)}.
	\end{align*}
	We next plug the above obtained estimates into \eqref{Estimate_4} to state
	\begin{align*}
	\|\,|u(\tau,\cdot)|^p-|\bar{u}(\tau,\cdot)|^p\|_{\dot{H}^s_q} &\lesssim (1+\tau)^{-\frac{np}{2}(1-\frac{1}{r})+p(1+\kappa)-\frac{s}{2}-(p-1)\frac{s^*}{2}} \|u-\bar{u}\|_{X(T)} \left(\|u\|^{p-1}_{X(T)}+ \|\bar{u}\|^{p-1}_{X(T)}\right) \\
	&\lesssim (1+\tau)^{-\frac{n}{2m}(p-1)+p(1+\kappa)-\frac{s}{2}} \|u-\bar{u}\|_{X(T)} \left(\|u\|^{p-1}_{X(T)}+ \|\bar{u}\|^{p-1}_{X(T)}\right),
	\end{align*}
	after taking $s^*= n/q-\e$ again with a sufficiently small positive $\e$. Finally, following an analogous manner to the proof of \eqref{Phi_INE_1*} we may affirm that the inequality \eqref{Phi_INE_2} is valid.  Summarizing, Theorem \ref{Global_Theorem} is proved completely.

\appendix
\section{Tools in the Fourier analysis}\label{Section_Appendix_A}
In this section, we will recall some tools from the Fourier analysis that have been applied in the treatments of the linear Cauchy problem.
\begin{prop}[Modified Bessel functions, Proposition 24.2.1 in \cite{Ebert-Reissig=book}] \label{Modified-Bessel-Funct}
	Let $h \in L^\theta$ with $\theta\in [1,2]$, be a radial function. Then, the Fourier transform $\ml{F}(h)$ is also a radial function and it satisfies
	\begin{align*}
		\ml{F}(h)(\xi)= c \int_0^\ity g(r) r^{n-1} \tilde{\mathcal{J}}_{\frac{n}{2}-1}(r|\xi|)\mathrm{d}r\ \ \mbox{carrying}\ \  g(|x|):= h(x),
	\end{align*}
	where $\tilde{\mathcal{J}}_\mu(s):=s^{-\mu}\mathcal{J}_\mu(s)$ is the so-called modified Bessel function with the Bessel function $\ml{J}_\mu(s)$ and a non-negative integer $\mu$. Moreover, the following properties hold for the modified Bessel functions:
	\begin{enumerate}[(1)]
		\item $s\,{\rm d}_s\tilde{\mathcal{J}}_\mu(s)= \tilde{\mathcal{J}}_{\mu-1}(s)-2\mu \tilde{\mathcal{J}}_\mu(s)$,
		\item ${\rm d}_s\tilde{\mathcal{J}}_\mu(s)= -s\tilde{\mathcal{J}}_{\mu+1}(s)$,
		\item $\tilde{\mathcal{J}}_{-\frac{1}{2}}(s)= \sqrt{\f{2}{\pi}}\cos s$ \ and \ $\tilde{\mathcal{J}}_{\frac{1}{2}}(s)= \sqrt{\f{2}{\pi}} \f{\sin s}{s}$,
		\item $|\tilde{\mathcal{J}}_\mu(s)| \leqslant C\mathrm{e}^{\pi|\fontshape{n}\selectfont\text{Im}\mu|}\ \,\,$  if \ $s \leqslant 1,$ \ and\ $\ \,\,\tilde{\mathcal{J}}_\mu(s)= Cs^{-\frac{1}{2}}\displaystyle{\cos \left(s-\frac{\mu}{2}\pi- \frac{\pi}{4} \right)} +\mathcal{O}(|s|^{-\frac{3}{2}})$ \ if \ $|s|\geqslant 1$,
		\item $\tilde{\mathcal{J}}_{\mu+1}(r|x|)= -\f{1}{r|x|^2}\partial_r \tilde{\mathcal{J}}_\mu(r|x|)$\ with\ $r \ne 0$, $x \ne 0$.
	\end{enumerate}
\end{prop}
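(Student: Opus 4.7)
The plan is to prove the integral representation first and then derive each of the five properties of the modified Bessel function from classical identities. For the integral representation, I would start from the definition
\[
\ml{F}(h)(\xi) = \int_{\R^n} h(x)\, \mathrm{e}^{-ix\cdot\xi}\,\mathrm{d}x
\]
and pass to polar coordinates $x = r\omega$ with $r = |x|$ and $\omega \in \mathbb{S}^{n-1}$. Writing $h(x) = g(r)$ by the radial hypothesis, the integral factorizes into a radial part and a spherical average
\[
\ml{F}(h)(\xi) = \int_0^\infty g(r)\, r^{n-1}\left( \int_{\mathbb{S}^{n-1}} \mathrm{e}^{-ir\omega\cdot\xi}\,\mathrm{d}\sigma(\omega)\right)\mathrm{d}r.
\]
By the rotational invariance of the surface measure, the inner integral depends only on $|\xi|$; choosing $\xi = |\xi|e_n$ and performing the standard reduction via the one-variable substitution $\omega_n = \cos\phi$ together with the classical Poisson-type integral formula for the Bessel function $\ml{J}_{\nu}$ yields the spherical average $c\,(r|\xi|)^{-(n/2-1)}\ml{J}_{n/2-1}(r|\xi|)$. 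Substituting the definition $\tilde{\ml{J}}_\mu(s) = s^{-\mu}\ml{J}_\mu(s)$ and absorbing the factor $|\xi|^{-(n/2-1)}\cdot(r|\xi|)^{n/2-1} = r^{n/2-1}\cdot |\xi|^{0}$ into the constant and the remaining power of $r$ in $r^{n-1}$, I obtain the claimed representation. That $\ml{F}(h)$ is itself radial follows from the rotation equivariance of the Fourier transform or directly from the formula, and the $L^\theta$ hypothesis (with $\theta\in[1,2]$) ensures that either the integral converges absolutely or the Fourier transform is interpreted via the Hausdorff--Young extension.

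Properties (1), (2) and (5) are consequences of the classical differential recurrences for $\ml{J}_\mu$. Starting from the two recurrences $\ml{J}'_\mu(s) = \ml{J}_{\mu-1}(s) - \frac{\mu}{s}\ml{J}_\mu(s)$ and $\frac{\mathrm{d}}{\mathrm{d}s}\bigl[s^{-\mu}\ml{J}_\mu(s)\bigr] = -s^{-\mu}\ml{J}_{\mu+1}(s)$, direct differentiation of $\tilde{\ml{J}}_\mu(s)=s^{-\mu}\ml{J}_\mu(s)$ gives property (2), while multiplying (2) by $s$ and rewriting $-s^2\tilde{\ml{J}}_{\mu+1} = -s^{-\mu}\ml{J}_{\mu+1}(s)\cdot s = s\partial_s\ml{J}_\mu - \mu s^{-\mu}\ml{J}_\mu$ then regrouping produces (1). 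Property (5) is just property (2) composed with the chain rule $\partial_r\tilde{\ml{J}}_\mu(r|x|) = |x|\,\tilde{\ml{J}}'_\mu(r|x|) = -r|x|^2\,\tilde{\ml{J}}_{\mu+1}(r|x|)$.

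Property (3) is verified by inserting the explicit half-integer Bessel identities $\ml{J}_{-1/2}(s) = \sqrt{2/(\pi s)}\cos s$ and $\ml{J}_{1/2}(s) = \sqrt{2/(\pi s)}\sin s$ into the definition of $\tilde{\ml{J}}_\mu$, which absorb the $s^{\pm 1/2}$ factor and produce the stated trigonometric forms. The bound on $|\tilde{\ml{J}}_\mu(s)|$ for small $s$ in property (4) follows from the power series $\ml{J}_\mu(s) = \sum_{k\geqslant 0}\frac{(-1)^k}{k!\Gamma(\mu+k+1)}(s/2)^{\mu+2k}$, which shows $\tilde{\ml{J}}_\mu(s) = 2^{-\mu}/\Gamma(\mu+1) + \mathcal{O}(s^2)$ and gives a uniform bound on $[0,1]$ with the $\mathrm{e}^{\pi|\operatorname{Im}\mu|}$ factor coming from the standard estimate $1/|\Gamma(\mu+1)| \lesssim \mathrm{e}^{\pi|\operatorname{Im}\mu|}$ via the reflection formula.

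The main obstacle, and the step that will require the most care, is the large-$s$ asymptotic expansion in property (4). For this I would rely on Hankel's integral representation
\[
\ml{J}_\mu(s) = \frac{1}{\pi}\int_0^\pi \cos(s\sin\theta - \mu\theta)\,\mathrm{d}\theta - \frac{\sin(\mu\pi)}{\pi}\int_0^\infty \mathrm{e}^{-s\sinh t - \mu t}\,\mathrm{d}t
\]
and apply the stationary phase method to the first integral, whose critical point $\theta = \pi/2$ yields the leading term $\sqrt{2/(\pi s)}\cos(s - \mu\pi/2 - \pi/4)$ with a next-order error $\mathcal{O}(s^{-3/2})$; dividing by $s^\mu$ then gives the claimed expansion for $\tilde{\ml{J}}_\mu$. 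The second Hankel integral and the error in stationary phase are exponentially small or $\mathcal{O}(s^{-3/2})$, so they can be absorbed into the remainder. Since this proposition is cited verbatim from the monograph of Ebert--Reissig, the proof would in practice simply refer the reader to that source rather than reproduce these classical computations in full.
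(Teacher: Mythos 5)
The paper itself gives no proof of this proposition: it is imported verbatim as Proposition 24.2.1 of the Ebert--Reissig monograph, exactly as your closing sentence anticipates. Your outline is the standard classical derivation and is essentially sound. The Bochner-type reduction to polar coordinates, the cancellation of the $|\xi|^{-(n/2-1)}$ prefactor against $(r|\xi|)^{n/2-1}$ to produce $r^{n-1}\tilde{\mathcal{J}}_{n/2-1}(r|\xi|)$, the derivation of (1), (2), (5) from the two classical recurrences for $\mathcal{J}_\mu$, the half-integer formulas for (3), and the power-series bound for small $s$ are all correct.

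The one genuine wrinkle is in the large-argument part of (4). Hankel's expansion gives $\mathcal{J}_\mu(s)=\sqrt{2/(\pi s)}\cos\left(s-\tfrac{\mu}{2}\pi-\tfrac{\pi}{4}\right)+\mathcal{O}(s^{-3/2})$, so your step ``dividing by $s^\mu$'' produces $\tilde{\mathcal{J}}_\mu(s)=Cs^{-\mu-\frac{1}{2}}\cos(\cdots)+\mathcal{O}(s^{-\mu-\frac{3}{2}})$, which is \emph{not} the form written in item (4) unless $\mu=0$; you cannot simultaneously claim to divide by $s^\mu$ and land on $Cs^{-1/2}+\mathcal{O}(s^{-3/2})$. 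Either the item should be read as a statement about $\mathcal{J}_\mu$ rather than $\tilde{\mathcal{J}}_\mu$ (a common transcription slip), or one should record the extra factor $s^{-\mu}$. This is harmless for the paper, which only invokes the bound $|\tilde{\mathcal{J}}_0(s)|\leqslant Cs^{-1/2}$ for $s>1$ in the even-dimensional case, where $\mu=0$ and $\tilde{\mathcal{J}}_0=\mathcal{J}_0$; but your writeup should not assert that the division by $s^\mu$ ``gives the claimed expansion'' as stated.
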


\begin{prop}[Fa\`{a} di Bruno's formula, \cite{FrancescoBruno1857}] \label{FadiBruno'sformula1}
	Let $h(\psi(x)):= (h\circ \psi)(x)$ with $x\in \R$. Then, a generalization of the chain rule to higher-order derivatives holds
	\begin{align*}
		\frac{{\rm d}^n}{{\rm d}x^n}h(\psi(x))= \sum _{\substack{1\cdot m_1+\cdots+ n\cdot m_n =n\\m_j\geqslant 0}}\frac{n!}{m_1! 1!^{m_1} \cdots m_n! n!^{m_n}}h^{(m_1+\cdots+m_n)}(\psi(x)) \prod_{j=1}^n \left(\psi^{(j)}(x) \right)^{m_j}.
	\end{align*}
\end{prop}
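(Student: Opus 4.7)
I would proceed by induction on $n$, the order of differentiation. The base case $n=1$ is simply the ordinary chain rule: the only solution of $1\cdot m_1 = 1$ with $m_j\geqslant 0$ is $m_1=1$, and the associated coefficient $\tfrac{1!}{1!\,1!^1}=1$ reduces the right-hand side to $h'(\psi(x))\psi'(x)$, which matches. For the inductive step, assume the formula holds at order $n$, and differentiate both sides of
\[
\frac{d^n}{dx^n}h(\psi(x))=\sum_{\substack{1\cdot m_1+\cdots+n\cdot m_n=n\\ m_j\geqslant 0}}\frac{n!}{m_1!\,1!^{m_1}\cdots m_n!\,n!^{m_n}}\,h^{(m_1+\cdots+m_n)}(\psi(x))\prod_{j=1}^n\bigl(\psi^{(j)}(x)\bigr)^{m_j}
\]
using the product rule. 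Differentiating the outer factor $h^{(m_1+\cdots+m_n)}(\psi(x))$ brings down one more power of $\psi'$ and raises the order of $h$; differentiating the $j$-th factor $(\psi^{(j)})^{m_j}$ lowers $m_j$ by one and introduces one factor of $\psi^{(j+1)}$, effectively raising $m_{j+1}$ by one. In both operations the weighted sum $1\cdot m_1+2\cdot m_2+\cdots$ increases by exactly one, which is precisely the content of ``moving from order $n$ to order $n+1$''.

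The heart of the argument, and where I expect the main obstacle to lie, is the combinatorial bookkeeping: one must verify that after this differentiation all contributions to a given tuple $(m_1',\ldots,m_{n+1}')$ satisfying $\sum j\,m_j'=n+1$ assemble with the correct multinomial coefficient $\tfrac{(n+1)!}{m_1'!\,1!^{m_1'}\cdots m_{n+1}'!\,(n+1)!^{m_{n+1}'}}$. A direct way to handle this is to collect the two types of contributions described above, factor out the target coefficient, and check the remaining sum equals $n+1$; the identity that closes the induction is
\[
\sum_{j\geqslant 1} j\,m_j' = n+1,
\]
which is exactly the weight constraint defining the summation.

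Alternatively, and perhaps more transparently, I would sketch a second proof that avoids heavy bookkeeping: interpret the tuple $(m_1,\ldots,m_n)$ as encoding a set partition of $\{1,\ldots,n\}$ into $m_1$ blocks of size $1$, $m_2$ blocks of size $2$, and so on, and $\tfrac{n!}{m_1!\,1!^{m_1}\cdots m_n!\,n!^{m_n}}$ as the number of such partitions. Repeated application of the chain and product rules to $(h\circ\psi)^{(n)}$ produces a sum indexed naturally by set partitions of $\{1,\ldots,n\}$, with each block of size $j$ contributing a factor $\psi^{(j)}$ and with an outer factor $h^{(k)}$, where $k$ is the number of blocks. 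Grouping partitions by their block-size profile $(m_1,\ldots,m_n)$ yields precisely the stated expression. Either route suffices, and I would present the inductive proof in the body, noting the combinatorial interpretation as a remark to motivate the multinomial constants.
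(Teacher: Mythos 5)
The paper does not prove this proposition at all: it is recalled in Appendix \ref{Section_Appendix_A} as a classical tool and attributed directly to Fa\`a di Bruno's original note \cite{FrancescoBruno1857}, so there is no in-paper argument to compare against. Your inductive proof is correct and is the standard one. The coefficient bookkeeping you flag as the main obstacle does close exactly as you predict: writing $C'=\frac{(n+1)!}{\prod_j m_j'!\,(j!)^{m_j'}}$ for the target coefficient of a tuple with $\sum_j j\,m_j'=n+1$, the contribution obtained by differentiating the outer factor $h^{(k)}$ equals $\frac{m_1'}{n+1}\,C'$, and the contribution obtained by differentiating $(\psi^{(j)})^{m_j}$ (which sends $m_j\mapsto m_j-1$, $m_{j+1}\mapsto m_{j+1}+1$ and carries the multiplicity $m_j$ from the power rule) equals $\frac{(j+1)\,m_{j+1}'}{n+1}\,C'$; summing over all sources gives $\frac{1}{n+1}\bigl(m_1'+\sum_{i\geqslant 2} i\,m_i'\bigr)\,C'=C'$, which is precisely the weight identity you isolate. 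Your alternative set-partition argument is likewise a standard and valid route, since $\frac{n!}{m_1!\,1!^{m_1}\cdots m_n!\,n!^{m_n}}$ counts partitions of an $n$-element set with $m_j$ blocks of size $j$. Either write-up would be an acceptable self-contained proof of a result the authors chose simply to cite.
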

\section{Tools in the harmonic analysis}
In this section, we will list several new tools from the harmonic analysis, which have been used in Section \ref{Section_GESDS} to deal with the nonlinear term in different norms.

\begin{prop}[Fractional Gagliardo-Nirenberg inequality, \cite{Hajaiej-Ozawa-Wang2011}] \label{Fractional_GN}
Let $1<r,\,r_1,\,r_2<\infty$, $\sigma >0$ and $a\in [0,\sigma)$. Then, it holds
$$ \|u\|_{\dot{H}^{a}_r} \lesssim \|u\|_{L^{r_1}}^{1-\theta}\, \|u\|_{\dot{H}^{\sigma}_{r_2}}^\theta, $$
where $$\theta=\theta(r,r_1,r_2,a,\sigma,n)=\left(\frac{1}{r_1}-\frac{1}{r}+\frac{a}{n}\right)\big/\left(\frac{1}{r_1}-\frac{1}{r_2}+\frac{\sigma}{n}\right)$$ and $a/\sigma\leqslant \theta\leqslant 1$.
\end{prop}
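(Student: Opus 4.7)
The plan is to prove this inequality via Littlewood--Paley decomposition combined with Bernstein's inequalities, after first using a scaling argument to pin down the exponent $\theta$. As a preliminary observation, applying the dilation $u_\lambda(x) := u(\lambda x)$ for $\lambda>0$ gives
\begin{align*}
\|u_\lambda\|_{\dot{H}^a_r} &= \lambda^{a-n/r}\|u\|_{\dot{H}^a_r}, \\
\|u_\lambda\|_{L^{r_1}}^{1-\theta}\|u_\lambda\|_{\dot{H}^\sigma_{r_2}}^\theta &= \lambda^{-(1-\theta)n/r_1 + \theta(\sigma - n/r_2)} \|u\|_{L^{r_1}}^{1-\theta}\|u\|_{\dot{H}^\sigma_{r_2}}^\theta,
\end{align*}
and equating the two powers of $\lambda$ forces precisely the stated formula for $\theta$. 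This shows that the given value of $\theta$ is the unique exponent for which the inequality can possibly hold.

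For the substantive proof, I would decompose $u = \sum_{j \in \mathbb{Z}} \Delta_j u$ using standard Littlewood--Paley projectors $\Delta_j$ localized to dyadic shells $|\xi| \sim 2^j$. On each frequency-localized piece one has $\|\Delta_j u\|_{\dot{H}^a_r} \approx 2^{ja}\|\Delta_j u\|_{L^r}$, and Bernstein's inequality supplies two competing bounds: a ``low-frequency'' bound $\|\Delta_j u\|_{L^r} \lesssim 2^{jn(1/r_1-1/r)}\|u\|_{L^{r_1}}$ tight as $j\to-\infty$, and a ``high-frequency'' bound $\|\Delta_j u\|_{L^r} \lesssim 2^{-j\sigma}2^{jn(1/r_2-1/r)}\|u\|_{\dot{H}^\sigma_{r_2}}$ tight as $j\to+\infty$. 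I would then split $\|u\|_{\dot{H}^a_r}$ at a dyadic threshold $J$, bound the two resulting pieces by these two inequalities, and optimize in $J$; the geometric series on each side converges precisely when $a/\sigma \leq \theta \leq 1$, and the optimization yields a final bound whose exponents on $\|u\|_{L^{r_1}}$ and $\|u\|_{\dot{H}^\sigma_{r_2}}$ are $1-\theta$ and $\theta$ respectively, by the scaling computation above.

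The main obstacle is the case in which $r$ does not lie in the convex hull of $\{r_1,r_2\}$, since Bernstein's inequality only moves upward in the Lebesgue scale on frequency-localized functions, so the naive bounds above may fail and one must route through an auxiliary exponent $\tilde{r}$ or work in the Triebel--Lizorkin realization $\dot{H}^s_p = \dot{F}^s_{p,2}$, using the Fefferman--Stein vector-valued maximal inequality to re-sum the dyadic pieces. A cleaner alternative, which I would sketch as a backup, is to invoke complex interpolation: since $[L^{r_1}, \dot{H}^\sigma_{r_2}]_\theta = \dot{H}^{\theta\sigma}_{r_\theta}$ with $1/r_\theta = (1-\theta)/r_1 + \theta/r_2$, together with the continuous embedding $\dot{H}^{\theta\sigma}_{r_\theta} \hookrightarrow \dot{H}^a_r$ under the scaling match, the abstract interpolation inequality $\|u\|_{[X_0,X_1]_\theta} \lesssim \|u\|_{X_0}^{1-\theta}\|u\|_{X_1}^\theta$ delivers the claim immediately. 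I expect that the compatibility constraint $a/\sigma \leq \theta \leq 1$ emerges in either approach at the point where one demands convergence of the dyadic optimization or admissibility of the interpolation exponent.
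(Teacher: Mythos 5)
The paper does not prove Proposition \ref{Fractional_GN} at all: it is quoted as a known tool in Appendix B with a pointer to \cite{Hajaiej-Ozawa-Wang2011}, and is used only as a black box in Section \ref{Section_GESDS}. So there is no in-paper argument to compare against, and your proposal should be judged on its own. As such it is essentially correct and is the standard proof. The dilation computation correctly identifies $\theta$ as the unique admissible exponent; the Littlewood--Paley/Bernstein scheme (two competing bounds on $\|\Delta_j u\|_{L^r}$, split at a threshold $J$, optimize in $J$) is the right mechanism, and you correctly isolate the one genuine obstruction, namely that Bernstein only raises the Lebesgue exponent on a frequency annulus, together with two legitimate repairs (routing through $\dot{F}^s_{p,2}$ with the Fefferman--Stein maximal inequality, or complex interpolation plus the homogeneous Sobolev embedding). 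Two small points are worth making explicit. First, the term-by-term summation $\sum_j 2^{ja}\|\Delta_j u\|_{L^r}$ bounds the $\dot{B}^a_{r,1}$ norm, and the embedding $\dot{B}^a_{r,1}\hookrightarrow \dot{F}^a_{r,2}=\dot{H}^a_r$ is what makes that step legitimate; this deserves a sentence rather than being left implicit. Second, the geometric-series argument converges only for $a/\sigma<\theta<1$ strictly, whereas the proposition as stated admits the endpoints $\theta=a/\sigma$ and $\theta=1$; those cases are not reached by the dyadic optimization and must be handled by your backup route ($\theta=1$ is the Sobolev embedding itself, and $\theta=a/\sigma$ forces $r_\theta=r$ in the interpolation identity), so the backup is not optional but is needed to recover the full statement. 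With those caveats the sketch is sound, though for the purposes of this paper a citation is all that is required.
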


\begin{prop}[Fractional powers rule, \cite{Run-Sickel1996}] \label{Fractional_Powers}
Let $p>1$, $1< q <\infty$ and $a \in (n/q,p)$. Let us denote by $\ml{G}(u)$, one of the functions $|u|^p,\, \pm |u|^{p-1}u$. Then, the following estimates hold:
$$\|\ml{G}(u)\|_{H^{a}_q}\lesssim \|u\|_{H^{a}_q}\, \|u\|_{L^\infty}^{p-1}\quad\text{ and }\quad \| \ml{G}(u)\|_{\dot{H}^{a}_q}\lesssim \| u\|_{\dot{H}^{a}_q}\, \|u\|_{L^\infty}^{p-1}. $$
\end{prop}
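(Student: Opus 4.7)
The plan is to prove the homogeneous inequality first; the inhomogeneous one then follows by combining it with the trivial pointwise bound $|\ml{G}(u)|\leqslant|u|^{p-1}|u|$ and H\"{o}lder's inequality. Write $a = k+\sigma$ with $k \in \mathbb{Z}_{\geqslant 0}$ and $\sigma \in [0,1)$. By the standard lift property for Riesz potential spaces, $\|\ml{G}(u)\|_{\dot{H}^a_q}$ is equivalent to $\sum_{|\alpha|=k}\|\partial^\alpha \ml{G}(u)\|_{\dot{W}^{\sigma,q}}$, so it suffices to estimate each such term. The constraint $a<p$ ensures that $\ml{G}\in\ml{C}^{k+1}$ away from the origin and satisfies the pointwise bound $|\ml{G}^{(j)}(\xi)|\lesssim|\xi|^{p-j}$ for $j=0,\ldots,k+1$; combined with Fa\`{a} di Bruno's formula from Proposition \ref{FadiBruno'sformula1}, this expresses $\partial^\alpha \ml{G}(u)$ as a finite sum of terms of the form
$$T_{j,\beta}(u) := \ml{G}^{(j)}(u)\prod_{i=1}^j \partial^{\beta_i}u, \qquad \sum_{i=1}^j|\beta_i|=k,\ 1\leqslant j\leqslant k.$$

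For the case $\sigma=0$, I would apply H\"{o}lder's inequality to each $T_{j,\beta}(u)$, placing $\ml{G}^{(j)}(u)$ in $L^\infty$ (bounded by $\|u\|_{L^\infty}^{p-j}$) and the derivative factors in $L^{q_i}$ spaces with $\sum_i 1/q_i = 1/q$. Then I would interpolate each $\|\partial^{\beta_i}u\|_{L^{q_i}}$ between $\|u\|_{L^\infty}$ and $\|u\|_{\dot{H}^a_q}$ via the fractional Gagliardo-Nirenberg inequality of Proposition \ref{Fractional_GN}. Choosing the interpolation exponents so that the $\dot{H}^a_q$ powers sum to exactly $1$, which is possible because $\sum_i|\beta_i|/a=k/a\leqslant 1$, the resulting product is precisely $\|u\|_{\dot{H}^a_q}\|u\|_{L^\infty}^{p-1}$. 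The assumption $a>n/q$ enters at this stage only for the inhomogeneous version, where $\|u\|_{L^\infty}$ is controlled by $\|u\|_{H^a_q}$ through the fractional Sobolev embedding of Proposition \ref{Fractional_Embedding}.

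The main difficulty is the fractional case $\sigma\in(0,1)$. Here the plan is to use the Gagliardo seminorm characterization
$$\|v\|_{\dot{W}^{\sigma,q}}^q \sim \iint_{\R^n\times\R^n} \frac{|v(x)-v(y)|^q}{|x-y|^{n+\sigma q}}\,dx\,dy,$$
and to expand $T_{j,\beta}(u)(x)-T_{j,\beta}(u)(y)$ as a telescoping sum in which exactly one factor among $\ml{G}^{(j)}(u)$ or a $\partial^{\beta_i}u$ is replaced by its difference while the remaining factors are evaluated at either $x$ or $y$. For the difference $\ml{G}^{(j)}(u(x))-\ml{G}^{(j)}(u(y))$ I would invoke the mean-value identity together with $|\ml{G}^{(j+1)}|\lesssim|u|^{p-j-1}$ (available exactly because $j+1\leqslant k+1$ and $p>a\geqslant k$) to dominate it by $\|u\|_{L^\infty}^{p-j-1}|u(x)-u(y)|$, thereby reducing matters to Gagliardo seminorms of $u$ and of its lower-order derivatives, which are again handled by Gagliardo-Nirenberg interpolation after applying H\"{o}lder's inequality to the double integral.

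The main obstacle is the arithmetic book-keeping of exponents in this last step: after summing the telescoping contributions over every summand and every $T_{j,\beta}$ produced by Fa\`{a} di Bruno, the powers of $\|u\|_{\dot{H}^a_q}$ must add to exactly $1$ while the powers of $\|u\|_{L^\infty}$ must add to exactly $p-1$, with no loss. This is where the constraints $a<p$ and $a>n/q$ are both essential: the former forbids any derivative of order exceeding $\lfloor p\rfloor+1$ from falling on $\ml{G}$ (which would require $\ml{G}^{(j)}$ to be Lipschitz at $0$ in a regime where it is only H\"{o}lder), while the latter provides the embedding needed to close the estimate. A fully rigorous accounting is most cleanly carried out in the Littlewood-Paley / Triebel-Lizorkin framework of Runst-Sickel, where the paraproduct decomposition isolates the high-low, low-high, and high-high interactions and the required exponent identities become transparent.
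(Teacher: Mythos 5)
The paper does not actually prove this proposition: it is quoted verbatim in Appendix B as a black-box tool with the citation to Runst--Sickel, so there is no internal proof to compare your argument against. Judged on its own terms, your outline contains two genuine gaps that prevent the elementary route from closing.

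First, the reduction of the fractional case to the Gagliardo double-integral seminorm is not legitimate for general $q\in(1,\infty)$. The seminorm $\iint |v(x)-v(y)|^q|x-y|^{-n-\sigma q}\,\mathrm{d}x\,\mathrm{d}y$ characterizes the Besov (Sobolev--Slobodeckij) space $\dot{B}^{\sigma}_{q,q}$, whereas the Riesz potential space $\dot{H}^{\sigma}_q$ appearing in the statement is the Triebel--Lizorkin space $\dot{F}^{\sigma}_{q,2}$; these coincide only when $q=2$. For $q>2$ one has a strict embedding $\dot{H}^{\sigma}_q\hookrightarrow \dot{B}^{\sigma}_{q,q}$, so bounding the double integral does not bound the norm you need; for $q<2$ the embedding reverses and you cannot return from the Gagliardo seminorms of $u$ and its derivatives to $\|u\|_{\dot{H}^a_q}$ without loss. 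Second, the telescoping step relies on the pointwise domination $|\ml{G}^{(j+1)}(\theta)|\lesssim \|u\|_{L^\infty}^{\,p-j-1}$, which reverses direction when $p-j-1<0$. The hypothesis $a<p$ only gives $p>k=\lfloor a\rfloor$, not $p\geqslant k+1$: for instance $a=2.3$, $p=2.4$ gives $k=2$ and $p-k-1=-0.6$, so $\ml{G}^{(k)}$ is merely H\"older continuous of order $p-k\in(0,1)$ and the difference is controlled by $|u(x)-u(y)|^{p-k}$ rather than by a full first difference, which destroys the exponent bookkeeping you describe. Your closing remark that a rigorous proof is most cleanly carried out via the paraproduct/Littlewood--Paley machinery of Runst--Sickel is exactly right --- that is where both issues are resolved, and it is in effect what the paper is invoking by citation --- but as written your elementary argument does not constitute a proof.
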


\begin{prop}[A fractional Sobolev embedding, Corollary 5.2 in \cite{Dao-Reissig=2019-2}] \label{Fractional_Embedding}
Let $1< q< \ity$ and $0< s_1< n/q< s_2$. Then, we have
$$\|u\|_{L^\ity} \lesssim \|u\|_{\dot{H}^{s_1}_q}+ \|u\|_{\dot{H}^{s_2}_q}. $$
\end{prop}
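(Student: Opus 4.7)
The plan is to establish the embedding by splitting $u$ into its low- and high-frequency parts and controlling each by one of the two homogeneous norms. I would choose smooth radial cutoffs with $\chi_{\mathrm{L}}+\chi_{\mathrm{H}}\equiv 1$ on $\R^n$, where $\chi_{\mathrm{L}}$ is supported in $\{|\xi|\leqslant 2\}$ and $\chi_{\mathrm{H}}$ in $\{|\xi|\geqslant 1\}$ (the cutoffs introduced in Section \ref{Section_Lm-Lq} work after trivial rescaling), and write
\[
u= K_1 *\bigl(|D|^{s_1}u\bigr)+ K_2 *\bigl(|D|^{s_2}u\bigr),
\]
where the multipliers are defined by $\widehat{K_1}(\xi):= \chi_{\mathrm{L}}(\xi)|\xi|^{-s_1}$ and $\widehat{K_2}(\xi):= \chi_{\mathrm{H}}(\xi)|\xi|^{-s_2}$. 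The main task then becomes showing $K_1, K_2\in L^{q'}$, after which Young's convolution inequality immediately delivers the bound $\|u\|_{L^\ity}\lesssim (\|K_1\|_{L^{q'}}+\|K_2\|_{L^{q'}})(\|u\|_{\dot{H}^{s_1}_q}+\|u\|_{\dot{H}^{s_2}_q})$.

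The integrability of $K_1$ in $L^{q'}$ is driven by its decay at infinity. Since $0<s_1<n/q\leqslant n$, we have $\widehat{K_1}\in L^1$ so $K_1$ is continuous and bounded; writing $\chi_{\mathrm{L}}(\xi)|\xi|^{-s_1}=\chi_{\mathrm{L}}(0)|\xi|^{-s_1}+(\chi_{\mathrm{L}}(\xi)-\chi_{\mathrm{L}}(0))|\xi|^{-s_1}$ identifies the leading asymptotic of $K_1$ at infinity with the Riesz kernel $c_n|x|^{s_1-n}$, the remainder decaying faster. Hence $|K_1(x)|\lesssim \langle x\rangle^{s_1-n}$, which lies in $L^{q'}(\R^n)$ precisely when $(n-s_1)q'>n$, i.e., $s_1<n/q$, which is our hypothesis. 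For $K_2$, the cutoff away from the origin forces $\widehat{K_2}$ to be smooth with rapidly decaying derivatives of every order in the regime $|\xi|\geqslant 1$, so $K_2$ is Schwartz at infinity; the only possible obstruction is a singularity of $K_2$ near $x=0$ of order $|x|^{s_2-n}$ arising by the same Riesz-kernel argument (or is trivially absent when $s_2\geqslant n$). This is locally in $L^{q'}$ precisely when $(n-s_2)q'<n$, i.e., $s_2>n/q$, which is again our hypothesis.

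The main obstacle, really the only delicate point, is the clean distributional identification of the leading asymptotic of $K_1$ at infinity and of $K_2$ at the origin with the Riesz kernel, together with handling the case $s_2\geqslant n$ separately (in which case $K_2$ is Schwartz and nothing remains to check near the origin). As an alternative route, one could perform a homogeneous Littlewood--Paley decomposition $u=\sum_{j\in\Z}\Delta_j u$, apply Bernstein's inequality in the form $\|\Delta_j u\|_{L^\ity}\lesssim 2^{jn/q}\|\Delta_j u\|_{L^q}$, split the resulting sum at $j=0$, and use the annular equivalence $\|\Delta_j u\|_{L^q}\approx 2^{-js_i}\|\Delta_j(|D|^{s_i}u)\|_{L^q}$ together with the $L^q$-boundedness of the Littlewood--Paley projectors (valid for $1<q<\ity$, which explains precisely why the endpoints are excluded in the hypothesis). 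The two geometric series $\sum_{j\leqslant 0}2^{j(n/q-s_1)}$ and $\sum_{j>0}2^{j(n/q-s_2)}$ both converge under the strict inequalities $s_1<n/q<s_2$, yielding the same embedding.
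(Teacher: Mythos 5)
The paper does not prove this proposition at all: it is imported verbatim as Corollary 5.2 of \cite{Dao-Reissig=2019-2}, so there is no internal argument to compare against. Your proposal is a correct, self-contained proof, and in fact you give two standard routes that both work. The kernel-splitting route is sound: $\widehat{K_1}\in L^1$ gives boundedness of $K_1$, the decomposition $\chi_{\mathrm{L}}|\xi|^{-s_1}=|\xi|^{-s_1}+(\chi_{\mathrm{L}}-1)|\xi|^{-s_1}$ (note $\chi_{\mathrm{L}}(0)=1$) identifies the tail with the Riesz kernel $c|x|^{s_1-n}$ plus a rapidly decaying remainder, and the two integrability conditions $(n-s_1)q'>n$ and $(n-s_2)q'<n$ are exactly $s_1<n/q<s_2$; the ``delicate point'' you flag (decay of the inverse Fourier transform of a symbol of negative order away from the origin, and its $|x|^{s_2-n}$ singularity at the origin, with a logarithm when $s_2=n$ that is still locally $L^{q'}$) is the standard dyadic/integration-by-parts estimate and is where the real work sits. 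The Littlewood--Paley route is even cleaner and is essentially how such embeddings are proved in \cite{Dao-Reissig=2019-2}. Two small quibbles: the uniform $L^q$-boundedness of the projectors $\Delta_j$ and the annular equivalence $\|\Delta_j u\|_{L^q}\approx 2^{-js}\|\Delta_j|D|^su\|_{L^q}$ follow from Young's inequality and hold for all $1\leqslant q\leqslant \ity$, so your parenthetical claim that this step is ``precisely why the endpoints are excluded'' is not accurate (the restriction $1<q<\ity$ is tied to the Riesz-potential definition of $\dot{H}^s_q$ rather than to this estimate); and in either route one should, strictly speaking, first argue for a dense subclass (e.g.\ functions whose Fourier transform vanishes near the origin) since the homogeneous spaces are defined modulo polynomials --- a routine point that the cited reference also suppresses. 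Neither affects the validity of the argument.
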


\begin{prop}[Fractional Leibniz rule, \cite{Grafakos2004}] \label{Fractional_Leibniz}
Let us assume $s>0$, $1\leqslant r \leqslant \infty$ and $1<r_1,\,r_2,\,r_3,\,r_4 \leqslant \infty$ satisfying the relation
\[ \frac{1}{r}=\frac{1}{r_1}+\frac{1}{r_2}=\frac{1}{r_3}+\frac{1}{r_4}.\]
Then, the following inequality holds:
$$ \|u v\|_{H^s_r}\lesssim \|u\|_{H^s_{r_1}}\,\|v\|_{L^{r_2}}+\|u\|_{L^{r_3}}\,\|v\|_{H^s_{r_4}}. $$
\end{prop}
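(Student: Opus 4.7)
The statement is the classical Kato--Ponce fractional Leibniz rule for Bessel potential spaces, and my plan is to prove it via Littlewood--Paley analysis and Bony's paraproduct decomposition. First, I would fix an inhomogeneous Littlewood--Paley partition $\{\Delta_j\}_{j\geqslant -1}$ and recall the square-function characterization
$$\|u\|_{H^s_r}\sim\left\|\left(\sum_{j\geqslant -1}\langle 2^j\rangle^{2s}|\Delta_j u|^2\right)^{1/2}\right\|_{L^r}\qquad(1<r<\infty),$$
so that the problem reduces to estimating a dyadic square function of $uv$. I would then split the product using Bony's decomposition
$$uv=T_u v+T_v u+R(u,v),\quad T_u v:=\sum_{j}S_{j-2}u\cdot\Delta_j v,\quad R(u,v):=\sum_{|j-k|\leqslant 1}\Delta_j u\cdot\Delta_k v,$$
exploiting that each $S_{j-2}u\,\Delta_j v$ (resp.\ $S_{j-2}v\,\Delta_j u$) has Fourier support in an annulus of size $\sim 2^j$, while $R(u,v)$ is a high--high interaction supported in a ball of radius $\lesssim 2^j$.

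Next, I would estimate the two low--high paraproducts. For $T_v u$, the annular spectral localization gives $\Delta_\ell(S_{j-2}v\,\Delta_j u)\neq 0$ only when $\ell\sim j$, and one has the pointwise bound $|S_{j-2}v(x)|\lesssim\mathcal{M}v(x)$ with $\mathcal{M}$ the Hardy--Littlewood maximal operator, bounded on $L^{r_2}$ because $r_2>1$. Combining this with H\"older's inequality along the split $1/r=1/r_1+1/r_2$ and the Fefferman--Stein vector-valued maximal inequality yields
$$\left\|\left(\sum_j 2^{2js}|\Delta_j T_v u|^2\right)^{1/2}\right\|_{L^r}\lesssim\|\mathcal{M}v\|_{L^{r_2}}\|u\|_{H^s_{r_1}}\lesssim\|u\|_{H^s_{r_1}}\|v\|_{L^{r_2}}.$$
Treating $T_u v$ symmetrically along $1/r=1/r_3+1/r_4$ produces the companion bound $\|u\|_{L^{r_3}}\|v\|_{H^s_{r_4}}$.

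The delicate piece is the remainder $R(u,v)$, where spectral cancellations are weaker: $\Delta_\ell(\Delta_j u\,\Delta_k v)$ is nonzero only for $\ell\leqslant j+C$ (rather than $\ell\sim j$), so the naive orthogonality fails. Here the assumption $s>0$ is essential: one writes
$$2^{\ell s}\bigl|\Delta_\ell R(u,v)\bigr|\lesssim\sum_{j\geqslant \ell-C}2^{(\ell-j)s}\,2^{js}\,|\Delta_j u|\,|\widetilde{\Delta}_j v|,$$
and the factor $2^{(\ell-j)s}$ with $s>0$ produces a geometric weight in $j-\ell$. An $\ell^2_\ell$ Cauchy--Schwarz in this convolution sum, followed by a pointwise maximal function domination on one factor and the Littlewood--Paley square function on the other, brings $R(u,v)$ into the same form as the paraproducts, and either the $(r_1,r_2)$ or the $(r_3,r_4)$ H\"older split closes the estimate.

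The main obstacle I expect is twofold: establishing the geometric summability in the high--high term with the correct vector-valued Hardy--Littlewood bound (which forces $s>0$ and is the place where Kato--Ponce genuinely uses regularity, as opposed to the trivial $s=0$ H\"older bound), and dealing with the endpoint indices $r\in\{1,\infty\}$ or $r_i=\infty$ where the $L^r$ Littlewood--Paley characterization of $H^s_r$ breaks down; in those ranges I would substitute the Triebel--Lizorkin/Besov framework, invoke Fefferman--Stein in its appropriate form, and if needed interpolate. The inhomogeneous low-frequency block $S_0(uv)$ is handled by plain H\"older (using either split) and absorbed by the $H^s$-norms via $H^s_{r_i}\hookrightarrow L^{r_i}$ for $s\geqslant 0$; recombining the three paraproduct estimates with the triangle inequality completes the argument.
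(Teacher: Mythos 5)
The paper offers no proof of this proposition: it is quoted as a known result and attributed to \cite{Grafakos2004}, so there is nothing internal to compare your argument against. Your sketch is the canonical Littlewood--Paley/Bony-paraproduct proof of the Kato--Ponce product rule, which is essentially what the cited reference (and the standard literature) contains, and the structure is sound: the two low--high paraproducts give exactly the two terms on the right (with the $s$ derivatives landing on the high-frequency factor, hence the $(r_1,r_2)$ split for $T_v u$ and the $(r_3,r_4)$ split for $T_u v$), and the high--high remainder is where $s>0$ is genuinely used to sum the geometric factor $2^{(\ell-j)s}$ over $j\geqslant \ell-C$. Two caveats are worth flagging. First, your displayed bound $2^{\ell s}|\Delta_\ell R(u,v)|\lesssim\sum_{j\geqslant\ell-C}2^{(\ell-j)s}2^{js}|\Delta_j u||\widetilde{\Delta}_j v|$ is not a literal pointwise inequality, since $\Delta_\ell$ is a convolution operator; you need to insert a maximal function (or take norms first and use Young's inequality in $\ell$), which you do mention but should make explicit. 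Second, your treatment of the endpoints $r\in\{1,\infty\}$ and $r_i=\infty$ is the weakest link: the square-function characterization and the Fefferman--Stein inequality both fail there, and interpolation only recovers interior exponents, never endpoints, so ``interpolate if needed'' does not close those cases --- the genuine endpoint versions require separate, harder arguments. This is a real gap against the statement as literally quoted (which allows $1\leqslant r\leqslant\infty$), though it is harmless for the paper's application, which only invokes the proposition with $r=q\in(1,\infty)$ and one factor measured in $L^\infty$, a case your argument fully covers (the low-frequency factor is then bounded trivially without the maximal function).
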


\section*{Acknowledgements}
The first author was supported by the China Postdoctoral Science Foundation (Grant No. 2021T140450 and No. 2021M692084). The authors thank Ryo Ikehata (Hiroshima University) for the suggestions in studying the viscous Boussinesq equation.


\begin{thebibliography}{99}
\bibitem{Baro-Figu-Himo=2019}
\newblock R.F. Barostichi, R.O. Figueira, A.A. Himonas,
\newblock Well-posedness of the good Boussinesq equation in analytic Gevrey spaces and time regularity.
\newblock \emph{J. Differential Equations} \textbf{267} (2019), no. 5, 3181--3198.
\bibitem{Biler=1989}
\newblock P. Biler, 
\newblock Regular decay of solutions of strongly damped nonlinear hyperbolic equations. 
\newblock \emph{Appl. Anal.} \textbf{32} (1989), no. 3-4, 277--285. 
\bibitem{Biler=1991}
\newblock P. Biler,
\newblock Time decay of solutions of semilinear strongly damped generalized wave equations.
\newblock \emph{Math. Methods Appl. Sci.} \textbf{14} (1991), no. 6, 427--443.
\bibitem{Bona-Sachs=1988}
\newblock J.L. Bona, R.L. Sachs,
\newblock Global existence of smooth solutions and stability of solitary waves for a generalized Boussinesq equation.
\newblock \emph{Comm. Math. Phys.} \textbf{118} (1988), no. 1, 15--29.
\bibitem{Boussinesq-01}
\newblock J.V. Boussinesq,
\newblock Th\'eorie de l'intumescence liquide appel\'ee onde solitaire ou de translation, se propageant dans un canal rectangulaire.
\newblock \emph{Comp. Rend. Hebd. Seances l'Acad. Sci.} \textbf{72} (1871), 755--759.
\bibitem{Boussinesq-02}
\newblock J.V. Boussinesq,
\newblock Th\'eorie g\'en\'erale des mouvements qui sont propag\'es dans un canal rectangulaire horizontal.
\newblock \emph{Comp. Rend. Hebd. Seances l'Acad. Sci.} \textbf{73} (1871), 256--260.
\bibitem{Boussinesq-03}
\newblock J.V. Boussinesq,
\newblock Th\'eorie des ondes et des remous qui se propagent le long d'un canal rectangulaire horizontal, en communiquant au liquide
contenu dans ce canal des vitesses sensiblement pareilles de la surface au fond.
\newblock \emph{J. Math. Pures Appl.} \textbf{17} (1872), 55--108.
\bibitem{Cho-Ozawa=2007}
\newblock Y. Cho, T. Ozawa,
\newblock On small amplitude solutions to the generalized Boussinesq equations.
\newblock \emph{Discrete Contin. Dyn. Syst.} \textbf{17} (2007), no. 4, 691--711.
\bibitem{D'Abbicco-Palmieri=2021}
\newblock M. D'Abbicco, A. Palmieri,
\newblock A note on $L^p$--$L^q$ estimates for semilinear critical dissipative Klein-Gordon equations.
\newblock \emph{J. Dynam. Differential Equations} \textbf{33} (2021), no. 1, 63--74.
\bibitem{Dao-Reissig=2019-1}
\newblock T.A. Dao, M. Reissig, 
\newblock An application of $L^1$ estimates for oscillating integrals to parabolic like semi-linear structurally damped $\sigma$-evolution models. \newblock \emph{J. Math. Anal. Appl.} \textbf{476} (2019), 426--463.
\bibitem{Dao-Reissig=2019-2}
\newblock T.A. Dao, M. Reissig, 
\newblock $L^1$ estimates for oscillating integrals and their applications to semi-linear models with $\sigma$-evolution like structural damping. \newblock \emph{Discrete Contin. Dyn. Syst.} \textbf{39} (2019), no. 9, 5431--5463.
\bibitem{Deift-Tomei-Trub=1982}
\newblock P. Deift, C. Tomei, E. Trubowitz,
\newblock Inverse scattering and the Boussinesq equation.
\newblock \emph{Comm. Pure Appl. Math.} \textbf{35} (1982), no. 5, 567--628.
\bibitem{Ebert-Reissig=book} 
\newblock M.R. Ebert, M. Reissig, 
\newblock \emph{Methods for partial differential equations. Qualitative properties of solutions, phase space analysis, semilinear models}.
\newblock Birkh\"auser/Springer, Cham, 2018.
\bibitem{FrancescoBruno1857} 
\newblock C.F. Fa\`a di Bruno,
\newblock Note sur un nouvelle formule de calcul diff\'erentiel.
\newblock \emph{Quart. J. Math.} \textbf{1} (1857), 359--360
\bibitem{Farah=2009}
\newblock L.G. Farah,
\newblock Local solutions in Sobolev spaces with negative indices for the good Boussinesq equation.
\newblock \emph{Comm. Partial Differential Equations} \textbf{34} (2009), no. 1-3, 52--73.
\bibitem{Farah=2009-02}
\newblock L.G. Farah, 
\newblock Local solutions in Sobolev spaces and unconditional well-posedness for the generalized Boussinesq equation.
\newblock \emph{Commun. Pure Appl. Anal.} \textbf{8} (2009), no. 5, 1521--1539. 
\bibitem{Farah-Linares=2010}
\newblock L.G. Farah, F. Linares, 
\newblock Global rough solutions to the cubic nonlinear Boussinesq equation.
\newblock \emph{J. Lond. Math. Soc. (2)} \textbf{81} (2010), no. 1, 241--254.
\bibitem{Grafakos2004} 
\newblock L. Grafakos, 
\newblock \emph{Classical and modern Fourier analysis}.
\newblock Prentice Hall, 2004.
\bibitem{Hajaiej-Ozawa-Wang2011}
\newblock H. Hajaiej, L. Molinet, T. Ozawa, B. Wang,
\newblock Necessary and sufficient conditions for the fractional Gagliardo-Nirenberg inequalities and applications to Navier-Stokes and generalized boson equations.
\newblock \emph{Harmonic analysis and nonlinear partial differential equations}, RIMS Kokyuroku Bessatsu, B26, Res.Inst.Math.Sci. (RIMS), Kyoto, (2011), 159--175.
\bibitem{Holmes-1995}
\newblock M.H. Holmes,
\newblock \emph{Introduction to perturbation methods}. 
\newblock Texts in Applied Mathematics, 20. Springer-Verlag, New York, 1995.
\bibitem{Levine=1974}
\newblock H.A. Levine,
\newblock Instability and nonexistence of global solutions to nonlinear wave equations of the form $P u_{tt}=-Au+F(u)$.
\newblock \emph{Trans. Amer. Math. Soc.} \textbf{192} (1974), 1--21.
\bibitem{Linares=1993}
\newblock F. Linares,
\newblock Global existence of small solutions for a generalized Boussinesq equation.
\newblock \emph{J. Differential Equations} \textbf{106} (1993), no. 2, 257--293.
\bibitem{Linares-Scialom=1995} 
\newblock F. Linares, M. Scialom, 
\newblock Asymptotic behavior of solutions of a generalized Boussinesq type equation.
\newblock \emph{Nonlinear Anal.} \textbf{25} (1995), no. 11, 1147--1158.
\bibitem{Liu=1995}
\newblock Y. Liu,
\newblock Instability and blow-up of solutions to a generalized Boussinesq equation.
\newblock \emph{SIAM J. Math. Anal.} \textbf{26} (1995), no. 6, 1527--1546.
\bibitem{Liu=1997}
\newblock Y. Liu, 
\newblock Decay and scattering of small solutions of a generalized Boussinesq equation.
\newblock \emph{J. Funct. Anal.} \textbf{147} (1997), no. 1, 51--68.
\bibitem{Liu-Xu=2008}
\newblock Y. Liu, R. Xu,
\newblock Global existence and blow up of solutions for Cauchy problem of generalized Boussinesq equation.
\newblock \emph{Phys. D} \textbf{237} (2008), no. 6, 721--731.
\bibitem{Liu-Wang=2019}
\newblock G. Liu, W. Wang, 
\newblock Inviscid limit for the damped Boussinesq equation.
\newblock \emph{J. Differential Equations} \textbf{267} (2019), no. 9, 5521--5542. 
\bibitem{Liu-Wang=2020}
\newblock G. Liu, W. Wang,
\newblock Decay estimates for a dissipative-dispersive linear semigroup and application to the viscous Boussinesq equation.
\newblock \emph{J. Funct. Anal.} \textbf{278} (2020), no. 7, 108413, 21 pp.
\bibitem{Liu-Wang=2014}
\newblock M. Liu, W. Wang,
\newblock Global existence and pointwise estimates of solutions for the multidimensional generalized Boussinesq-type equation.
\newblock \emph{Commun. Pure Appl. Anal.} \textbf{13} (2014), no. 3, 1203--1222.
\bibitem{Lin-Wu-Loxton=2009}
\newblock Q. Lin, Y.H. Wu, R. Loxton,
\newblock On the Cauchy problem for a generalized Boussinesq equation.
\newblock \emph{J. Math. Anal. Appl.} \textbf{353} (2009), no. 1, 186--195.
\bibitem{Ikehata-Soga}
\newblock R. Ikehata, M. Soga,
\newblock Asymptotic profiles for a strongly damped plate equation with lower order perturbation.
\newblock \emph{Commun. Pure Appl. Anal.} \textbf{14} (2015), no. 5, 1759--1780.
\bibitem{Manoranjan-Ortega-Sanz=1988}
\newblock V.S. Manoranjan, T. Ortega, J.M. Sanz-Serna,
\newblock Soliton and antisoliton interactions in the good Boussinesq equation.
\newblock \emph{J. Math. Phys.} \textbf{29} (1988), no. 9, 1964--1968.
\bibitem{Masmoudi=2007}
\newblock N. Masmoudi,
\newblock Remarks about the inviscid limit of the Navier-Stokes system.
\newblock \emph{Comm. Math. Phys.} \textbf{270} (2007), no. 3, 777--788.
\bibitem{Nara-Reissig=2013}
\newblock T. Narazaki, M. Reissig, 
\newblock $L^1$ estimates for oscillating integrals related to structural damped wave models. \newblock \emph{Studies in phase space analysis with applications to PDEs}, 215--258, Progr. Nonlinear Differential Equations Appl., 84, Birkh\"auser/Springer, New York, 2013.
\bibitem{Palmieri-Reissig=2018}
\newblock A. Palmieri, M. Reissig, 
\newblock Semi-linear wave models with power non-linearity and scale-invariant time-dependent mass and dissipation, II.
\newblock \emph{Math. Nachr.} \textbf{291} (2018), no. 11-12, 1859--1892.
\bibitem{Russell}
\newblock J.S. Russell,
\newblock Report on the committee on waves, in: Report of the Seventh Meeting (1837) of British Association for the Advancement of
Science.
\newblock \emph{Liverpool, John Murray, London}, 1838, 417--496.
\bibitem{Run-Sickel1996}
\newblock T. Runst,  W. Sickel,
\newblock \emph{Sobolev spaces of fractional order, Nemytskij operators, and nonlinear partial differential equations, De Gruyter series in nonlinear analysis and applications.}
\newblock Walter de Gruyter $\&$ Co., Berlin, 1996.
\bibitem{Sachs=1990}
\newblock R.L. Sachs,
\newblock On the blow-up of certain solutions of the good Boussinesq equation.
\newblock \emph{Appl. Anal.} \textbf{36} (1990), no. 3-4, 145--152.
\bibitem{Shibata=2000}
\newblock Y. Shibata,
\newblock On the rate of decay of solutions to linear viscoelastic equation. 
\newblock \emph{Math. Methods Appl. Sci.} \textbf{23} (2000), no. 3, 203--226. 
\bibitem{Straughan=1992}
\newblock B. Straughan, 
\newblock Global nonexistence of solutions to some Boussinesq type equations.
\newblock \emph{J. Math. Phys. Sci.} \textbf{26} (1992), no. 2, 155--164.
\bibitem{Tsutsumi-Matahashi=1991}
\newblock M. Tsutsumi, T. Matahashi, 
\newblock On the Cauchy problem for the Boussinesq type equation.
\newblock \emph{Math. Japon.} \textbf{36} (1991), no. 2, 371--379.
\bibitem{Varlamov=1996-02} 
\newblock V. Varlamov, 
\newblock On the Cauchy problem for the damped Boussinesq equation. 
\newblock \emph{Differential Integral Equations} \textbf{9} (1996), no. 3, 619--634.
\bibitem{Varlamov=1996}
\newblock V. Varlamov, 
\newblock Existence and uniqueness of a solution to the Cauchy problem for the damped Boussinesq equation. 
\newblock \emph{Math. Methods Appl. Sci.} \textbf{19} (1996), no. 8, 639--649.
\bibitem{Wang=2013}
\newblock Y. Wang,
\newblock Asymptotic decay estimate of solutions to the generalized damped Bq equation.
\newblock \emph{J. Inequal. Appl.} 2013, 2013:323, 12 pp.
\bibitem{Xu-Luo-Shen-Huang=2017}
\newblock R. Xu, Y. Luo, J. Shen, S. Huang,
\newblock Global existence and blow up for damped generalized Boussinesq equation.
\newblock \emph{Acta Math. Appl. Sin. Engl. Ser.} \textbf{33} (2017), no. 1, 251--262.
\bibitem{Xue=2006}
\newblock R. Xue,
\newblock Local and global existence of solutions for the Cauchy problem of a generalized Boussinesq equation.
\newblock \emph{J. Math. Anal. Appl.} \textbf{316} (2006), no. 1, 307--327.
\bibitem{Yang-Guo=2008}
\newblock Z. Yang, B. Guo,
\newblock Cauchy problem for the multi-dimensional Boussinesq type equation.
\newblock \emph{J. Math. Anal. Appl.} \textbf{340} (2008), no. 1, 64--80.
\end{thebibliography}
\end{document}